\numberwithin{equation}{section}
  \newtheorem{thm}{Theorem}[section]
  \newtheorem{lem}[thm]{Lemma}
  \newtheorem{prop}[thm]{Proposition}
  \newtheorem{cor}[thm]{Corollary}
  \theoremstyle{definition}
  \newtheorem{defn}[thm]{Definition}
  \newtheorem{exm}[thm]{Example}
  \newtheorem{rmk}[thm]{Remark}
  \newtheorem{open}[thm]{Problem}
 \newcommand\ra{\rightarrow}
\newcommand{\lex}{\,\overrightarrow{\times}\,}
 \newcommand\mI{\mathcal{I}}
 \newcommand\s{\subseteq}
 \newcommand\B{\mathrm{B}}
 \newcommand\C{\mathrm{C}}
\newcommand{\id}{\mbox{\rm Id}}
\newcommand\spec{\mathrm{Spec}}
\newcommand{\Aut}{\mbox{\rm Aut}}
\newcommand{\semid}{\,\overrightarrow{\rtimes}_{\phi}\,}
 \numberwithin{equation}{section}
\def\iff{if and only if }
\def\im{ \mathrm{Im} }
\let\Right\right
\let\Left\left
\def\right#1{\Right#1\@ifnextchar){\!\right}{}}
\def\left#1{\Left#1\@ifnextchar({\!\left}{}}
\begin{document}
\title[Some results on pseudo MV-algebras with square roots]{Some results on pseudo MV-algebras with square roots}
\author[Anatolij Dvure\v{c}enskij and Omid Zahiri]{Anatolij Dvure\v{c}enskij$^{^{1,2,3}}$, Omid Zahiri$^{^{1,*}}$}

\date{}%
\thanks{Corresponding Author: Omid Zahiri}
\address{$^1$Mathematical Institute, Slovak Academy of Sciences, \v{S}tef\'anikova 49, SK-814 73 Bratislava, Slovakia}
\address{$^2$Palack\' y University Olomouc, Faculty of Sciences, t\v r. 17. listopadu 12, CZ-771 46 Olomouc, Czech Republic}
\address{$^3$ Depart. Math., Constantine the Philosopher University in Nitra, Tr. A. Hlinku 1, SK-949 01 Nitra, Slovakia}
\email{dvurecen@mat.savba.sk, zahiri@protonmail.com}
\thanks{}

\keywords{Pseudo MV-algebra, MV-algebra, square root, strict square root, weak square root, strongly atomless pseudo MV-algebra, two-divisibility, unital $\ell$-group}
\subjclass[2010]{06C15, 06D35}

%\maketitle

\begin{abstract}
The paper provides a study of pseudo MV-algebras with square roots.
We introduce different notions of a square root on a pseudo MV-algebra, and present their main properties.
We show that the class of pseudo-MV-algebras with square roots is a proper subvariety of the variety of pseudo MV-algebras. Then, we define a strict square root to classify the class of pseudo MV-algebras with square roots.
We found a relationship between strongly atomless pseudo MV-algebras and strict pseudo MV-algebras.
Finally, we investigate square roots on representable symmetric pseudo MV-algebras, and we present a complete
characterization of a square root and a weak square root on a representable symmetric pseudo MV-algebra using addition in a unital $\ell$-group. Some interesting examples are provided.
\end{abstract}

\date{}

\maketitle

%%%%%%%%%%%%%%%%Introduction%%%%%%%%%%%%%%%%%%%%%%%%%%%%%%%%%
%%%%%%%%%%%%%%%%%%%%%%%%%%%%%%%%%%%%%%%%%%%%%%%%%%%%%%%%%%%%%
\section{Introduction }%1

A study of many valued algebras started by Chang, \cite{Cha1,Cha2}. He introduced MV-algebras as
an algebraic counterpart of many-valued reasoning. It was recognized that every interval in
a unital Abelian $\ell$-group gives an example of
MV-algebras. The principal result of the theory of MV-algebras is a representation theorem by
Mundici \cite{Mun} saying that there is a categorical equivalence between the category of
MV-algebras and the category of unital Abelian $\ell$-groups. Today's theory of MV-algebras is
very deep and has many interesting connections with other parts of mathematics with many
important applications to different areas of the research.

In the last period, two equivalent non-commutative generalizations of MV-algebras appeared.
One was  introduced in \cite{georgescu} as pseudo MV-algebras and the other one in \cite{Rac} as generalized
MV-algebras. They form an algebraic counterpart of the non-commutative \L ukasiewicz logic. The principal representation result on pseudo MV-algebras, \cite{Dvu2}, says that there is a categorical equivalence between the category of unital $\ell$-groups not necessarily Abelian and the category of pseudo MV-algebras. It generalizes a similar result by Mundici \cite{Mun} for MV-algebras. Pseudo MV-algebras were studied in many papers, see, e.g. \cite{GaTs, YaRu}. They were generalized to non-commutative BL-algebras, and fuzzy logic connected with these non-commutative algebraic structures was investigated in \cite{Haj}.

Square roots on MV-algebras were originally introduced in \cite{Hol}, and they are valuable tools for algebraic structures with non-idempotent binary operations.
They can provide us useful information about the elements and some properties of these types of algebras,
and they help us to build new operations from binary operations according to our needs.
For this reason, square roots have many applications in the theory of semigroups, groups and rings.

In \cite{Hol}, H{\"o}hle also attempted to explore the significance of square roots in the class of residuated lattices, integral,  commutative, residuated $\ell$-monoids. He classified the class of MV-algebras with square root and  presented a decomposition for each MV-algebra as a direct product of a Boolean algebra and a strict MV-algebra.
He also found the connection between the concepts of divisibility and objectivity and strictness on complete MV-algebras.
His study showed that a complete MV-algebra with square roots is either a complete Boolean algebra
or a Boolean valued model of the real unit interval viewed as an MV-algebra or a product of both.
Ambrosio \cite{Amb} continued to study square roots on MV-algebras. She introduced the concept of 2-atomless MV-algebras and showed
that strict MV-algebras are equivalent to 2-atomless MV-algebras. Then she proved that each strict MV-algebra contains a
copy of the MV-algebra of all rational dyadic numbers. Finally, she investigated the homomorphic image of MV-algebra with square roots.

B\v{e}lohl\'avek studied the class of all residuated lattices with square roots and showed that it is an equational class.
We refer to \cite{Hol,Amb,NPM,331} for more details about square roots on MV-algebras.

The main goals of this paper are to provide a study on pseudo MV-algebras with square roots and present their main properties:

\begin{itemize}
\item Define square roots and weak square roots that in the case of MV-algebras coincide.

\item Present strict square roots on pseudo MV-algebras.

\item Decompose square roots into two parts, one as the identity on the Boolean part and one as a strict square root.

\item Study strongly atomless pseudo MV-algebras and find their relationship with strict pseudo MV-algebras.

\item Represent square roots by addition in unital $\ell$-groups on representable symmetric pseudo MV-algebras.

\item  Give examples of weak square roots on totally ordered pseudo MV-algebras that are not a square root.
\end{itemize}

The paper is organized as follows.
Section 2 contains basic definitions, properties, and results about pseudo MV-algebras that will be used in the next sections. Section 3 introduces the notions of a square root and of a weak square root on a pseudo MV-algebra, and their main properties are obtained. It is proved that the class of pseudo MV-algebras with square roots is a proper subvariety of the variety of pseudo MV-algebras. The image of a square root on a zero element of pseudo MV-algebras plays an important role. In Section 4, we define the concept of a strict square root and classify the class of pseudo MV-algebras with square roots. We prove that for each pseudo MV-algebra $M$, only one of the following statements holds: (1) $M$ is a Boolean algebra; (2) $M$ is a strict  pseudo MV-algebra; (3) $M$ is isomorphic to the direct product of a Boolean algebra and a strict pseudo MV-algebra.
Using unital $\ell$-groups, Section 5 investigates any square root on representable symmetric pseudo MV-algebras.
Since any MV-algebra is representable and symmetric, these results also hold for MV-algebras. Finally, Section 6 provides two examples of weak square roots on totally ordered pseudo MV-algebras that are not square roots.

%%%%%%%%%%%%%%%%Preliminaries%%%%%%%%%%%%%%%%%%%%%%%%%%%%%%%%
%%%%%%%%%%%%%%%%%%%%%%%%%%%%%%%%%%%%%%%%%%%%%%%%%%%%%%%%%%%%%
\section{Preliminaries}%2

In this section, we gather some preliminary results about generalized Boolean algebras and pseudo MV-algebras, which will be needed in the following sections. For more details about pseudo MV-algebras, we refer to \cite{georgescu,Dvu1,DvuS,Dvu2}.

\begin{defn}\label{de:PMV} \cite{georgescu}
A {\it pseudo MV-algebra} is an algebra $(M;\oplus,^-,^\sim,0,1)$ of type $(2,1,1,0,0)$ such that the following axioms hold
for all $x,y,z\in M$,

(A1) $x\oplus(y\oplus z)=(x\oplus y)\oplus z$,

(A2) $x\oplus 0=0\oplus x=x$,

(A3) $x\oplus 1=1\oplus x=1$,

(A4) $1^{-}=1^{\sim}=0$,

(A5) $(x^{-}\oplus y^{-})^{\sim}=(x^{\sim}\oplus y^{\sim})^{-}$,

(A6) $x\oplus (x^{\sim}\odot y)=y\oplus (y^{\sim}\odot x)=(x\odot y^{-})\oplus y=(y\odot x^{-})\oplus x$,

(A7) $x\odot (x^{-}\oplus y)=(x\oplus y^{\sim})\odot y$,

(A8) $(x^{-})^{\sim}=x$, \\
where
$$
x\odot y=(y^{-}\oplus x^{-})^{\sim}.
$$
If the operation $\oplus$ is commutative, equivalently, $\odot$ is commutative, then $M$ is an MV-algebra, and of course $x^-=x^\sim$ for each $x\in M$; we denote it then by $x':=x^-$. For more about MV-algebras, see \cite{CDM}.

It is well-known that every pseudo MV-algebra is a bounded distributive lattice, \cite{georgescu}, where axiom (A6) defines $x\vee y$ and (A7) describes $x\wedge y$.

Pseudo MV-algebras are intimately connected with unital $\ell$-groups. We recall that a {\it lattice-ordered group} ($\ell${\it-group}) is an algebra $(G;\vee,\wedge,+,-,0)$ such that $(G;\vee,\wedge)$ is a lattice, $(G;+,-,0)$ is a group, and $+$ is an order-preserving map. We denote by $G^+=\{g\in G\mid g\ge 0\}$. An element $u\in G^+$ is said to be a {\it strong unit} if given $g\in G$, there is an integer $n\ge 1$ such that $g\le nu$. A couple $(G,u)$, where $G$ is an $\ell$-group with a fixed strong unit $u$, is said to be a {\it unital $\ell$-group}.

If  $(G;\vee,\wedge,+,-,0)$ is an $\ell$-group, given $0\leq u$, we define an interval $[0,u]:=\{x\in G\mid 0\leq x\leq u\}$. The interval $[0,u]$ can be converted into a pseudo MV-algebra $\Gamma(G,u)=([0,u];\oplus,^-,^\sim,0,u)$ as follows: $x\oplus y = (x+y)\wedge u$, $x\odot y= (x-u+y)\vee 0$, $x^-=u-x$, and $x^\sim = -x+u$, $x,y \in [0,u]$. For more details on $\ell$-groups, consult with \cite{Dar,Fuc,Gla}.

Conversely, due to the principal result \cite{Dvu2}, every pseudo MV-algebra is isomorphic to $\Gamma(G,u)$ for some unital $\ell$-group $(G,u)$. The unital $\ell$-group $(G,u)$ is unique up to isomorphism of unital $\ell$-groups. Moreover, the category of unital $\ell$-groups is categorical equivalent to the category of pseudo MV-algebras. It generalizes a well-known result by Mundici, \cite{Mun}, who established it for MV-algebras.

Let $\mathcal{UG}$ be the category of unital  $\ell$-groups whose objects are unital $\ell$-groups $(G,u)$ and morphisms between objects are $\ell$-group homomorphisms preserving fixed strong units. We denote by $\mathcal{PMV}$ the category of pseudo MV-algebras whose objects are pseudo MV-algebras and morphisms are homomorphisms of pseudo MV-algebras. Consider the functor $\mathbf{ \Gamma}:\mathcal{UG}\to \mathcal{PMV}$ which is defined as follows
$$
\mathbf{\Gamma}(G,u)=(\Gamma(G,u);\oplus, ^-,^\sim,0,u),
$$
and if $h:(G,u)\to (H,v)$ is a morphism, then $\mathbf {\Gamma}(h)$ is the restriction of $ h$ onto the interval $[0,u]$.
On the other side, there is a functor from the category of $\mathcal{PMV}$ to $\mathcal{UG}$ sending a pseudo MV-algebra $M$ to a unital $\ell$-group $(G,u)$ such that $M \cong \Gamma(G,u)$ which is
denoted by $\mathbf{\Psi}:\mathcal{PMV}\ra \mathcal{UG}$. If $\phi: M_1\to M_2$ is a morphism of pseudo MV-algebras, then $\mathbf \mathbf{\Psi}(\phi)$ is a unique extension of $\phi$ onto a morphism of unital $\ell$-groups. It is possible to show that if $\phi:M_1\to M_2$ is an injective (surjective) homomorphism, so is $\mathbf{\Psi}(\phi)$.

\begin{thm}{\rm(\cite{Dvu2})}\label{functor}
The composite functors $\mathbf{\Gamma}\circ\mathbf{\Psi}$ and $\mathbf{\Psi}\circ\mathbf{\Gamma}$ are naturally equivalent to the identity functors of $\mathcal{PMV}$ and $\mathcal{UG}$, respectively. Therefore, the categories $\mathcal{PMV}$ and $\mathcal{UG}$ are categorically equivalent.
\end{thm}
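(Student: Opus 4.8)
The plan is to complement the functor $\mathbf{\Gamma}$ already described with a quasi-inverse functor $\mathbf{\Psi}$ and to exhibit the two natural isomorphisms.

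\textbf{The functor $\mathbf{\Gamma}$ is well defined.} For a unital $\ell$-group $(G,u)$ one verifies directly that $([0,u];\oplus,^-,^\sim,0,u)$ satisfies (A1)--(A8): each axiom becomes an identity valid in every $\ell$-group, the only points needing care being that, using $x,y\le u$, all four terms of (A6) equal $x\vee y$ and the two terms of (A7) equal $x\wedge y$. A unit-preserving $\ell$-group morphism sends $[0,u]$ into $[0,v]$ and commutes with $+,-,\vee,\wedge$, so its restriction is a morphism of pseudo MV-algebras; functoriality of $\mathbf{\Gamma}$ follows.

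\textbf{Construction of $\mathbf{\Psi}$.} Given a pseudo MV-algebra $M$, I would reconstruct the $\ell$-group by the method of good sequences, generalizing Mundici. A \emph{good sequence} is $\mathbf{a}=(a_1,a_2,\dots)$ with $a_i\in M$, $a_n=0$ for all large $n$, and $a_i\oplus a_{i+1}=a_i$ for all $i$ (with the appropriate one-sided convention forced by non-commutativity). On the set $\mathcal{G}(M)$ of good sequences define addition with carrying, the $n$-th coordinate of $\mathbf{a}+\mathbf{b}$ being $a_n\oplus(a_{n-1}\odot b_1)\oplus\cdots\oplus(a_1\odot b_{n-1})\oplus b_n$ with a fixed left-to-right bracketing, and put $\mathbf{0}=(0,0,\dots)$, $u_M=(1,0,0,\dots)$. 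One then shows that $(\mathcal{G}(M),+,\mathbf{0})$ is a cancellative monoid whose coordinatewise order is a compatible lattice order and which satisfies the two-sided Ore condition, so that $\mathcal{G}(M)$ is (isomorphic to) the positive cone of an $\ell$-group $G_M$ in which $u_M$ is a strong unit, and that $a\mapsto(a,0,0,\dots)$ is an isomorphism $M\cong\Gamma(G_M,u_M)$. A morphism $\phi\colon M_1\to M_2$ acts coordinatewise on good sequences, extending uniquely (since $[0,u_{M_1}]$ generates $G_{M_1}$ as a group) to a unit-preserving $\ell$-group morphism $\mathbf{\Psi}(\phi)$; this also yields the assertion that $\mathbf{\Psi}$ preserves injectivity and surjectivity.

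\textbf{The natural equivalences and conclusion.} The isomorphisms $\eta_M\colon M\to\Gamma(G_M,u_M)$ above are natural in $M$, giving $\mathrm{Id}_{\mathcal{PMV}}\Rightarrow\mathbf{\Gamma}\circ\mathbf{\Psi}$, while the inclusions $[0,u]\hookrightarrow G$ induce $\ell$-group isomorphisms $G_{\Gamma(G,u)}\to G$ (surjective because $[0,u]$ generates $G$, injective by the universal property of the group of differences), natural in $(G,u)$, giving $\mathbf{\Psi}\circ\mathbf{\Gamma}\Rightarrow\mathrm{Id}_{\mathcal{UG}}$. Commutativity of the two naturality squares is a routine diagram chase, and the stated categorical equivalence is then immediate.

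\textbf{Main obstacle.} The real work, and the only place where non-commutativity genuinely interferes, is in the construction of $\mathbf{\Psi}$: proving that good-sequence addition is associative, cancellative, lattice-compatible and Ore, so that $G_M$ is an honest lattice-ordered \emph{group}. Since the left translations $a_i\odot(\cdot)$ and the right translations $(\cdot)\odot b_j$ no longer commute, the carrying computation must be organized around the fixed bracketing and one has to check, directly from axioms (A1)--(A8) — and most essentially from (A5)--(A7), which govern the interaction of the two negations $x\mapsto x^-$ and $x\mapsto x^\sim$ — that this bracketing is preserved under rearrangement and that no carry is lost. In the commutative case Mundici shortens this step via Chang's subdirect representation by chains, which is unavailable here, so the argument has to be carried out by hand; an alternative is to route the proof through Bosbach's theory of pseudo-cone algebras, or through the equivalence with pseudo effect algebras and their representation theorem, but this merely relocates the same difficulty.
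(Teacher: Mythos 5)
This theorem is not proved in the paper at all: it is quoted from the cited source \cite{Dvu2}, so there is no internal proof to compare against. Your outline does follow the strategy of that source — Dvure\v{c}enskij's representation theorem is indeed obtained by generalizing Mundici's good-sequence construction to the non-commutative setting, building the positive cone of $G_M$ from good sequences with a carrying addition and then recovering the $\ell$-group. So the route you chose is the right one and matches the original.

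That said, as a proof your submission has a genuine gap, and you identify it yourself in the final paragraph: everything that makes the theorem non-trivial — that the carrying addition on $\mathcal{G}(M)$ is well defined on good sequences and associative, that the monoid is cancellative and conical, that the coordinatewise order is a compatible lattice order, and that the monoid satisfies the conditions (Ore/Birkhoff-type) guaranteeing it is the positive cone of an $\ell$-group — is asserted, not verified. In the non-commutative case these verifications are precisely where axioms (A5)--(A7) must be invoked coordinate by coordinate, and the specific left-to-right bracketing you fix in the formula $a_n\oplus(a_{n-1}\odot b_1)\oplus\cdots\oplus(a_1\odot b_{n-1})\oplus b_n$ has to be shown consistent under the rearrangements forced by associativity of $+$; none of this is routine and none of it is done. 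Likewise, naturality of $\eta_M$ and the isomorphism $G_{\Gamma(G,u)}\cong G$ are dismissed as ``a routine diagram chase,'' but the surjectivity/injectivity argument for the latter depends on the unproved universal property of $G_M$. In short: correct architecture, essentially the same as the cited proof, but the load-bearing lemmas are missing, so this is a plan rather than a proof.
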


Let $H$ and $G$ be $\ell$-groups. We define the lexicographic product $H\lex G$ as the group addition on the direct product $H\times G$ endowed with the lexicographic order $(h_1,g_1)\le (h_2,g_2)$ for $(h_1,g_1), (h_2,g_2)\in H\times G$ iff either $h_1<h_2$ or $h_1=h_2$ and $g_1\le g_2$. Then $H\lex G$ is a partially-ordered group that is an $\ell$-group iff $H$ is linearly ordered, see \cite[(d) p. 26]{Fuc}.

A pseudo MV-algebra $M$ is said to be {\it symmetric} if $x^\sim=x^-$ for each $x\in M$. We underline that if $M$ is symmetric, then $\oplus$ is not necessarily commutative. Indeed, let $G$ be a non-commutative $\ell$-group and $(H,1)$ be an arbitrary unital subgroup of $(\mathbb R,1)$. Then $(H\lex G,(1,0))$, where $H\lex G$ is the lexicographic product of $H$ and $G$, is a unital $\ell$-group. The pseudo MV-algebra $\Gamma(H\lex G,(1,0))$ is a symmetric pseudo MV-algebra that is not an MV-algebra.

We note that if $(G,u)$ is a unital $\ell$-group, the pseudo MV-algebra $\Gamma(G,u)$ is symmetric iff $u$ belongs to the center  $\C(G)=\{g \in G \mid g+h=h+g, \forall h \in G\}$.

In every pseudo MV-algebra $M$, we can define a partial operation $+$ in the following form: $x+y$ is defined in $M$ if $x\le y^-$, equivalently $y\le x^\sim$, equivalently $y\odot x=0$, and in either case, we set $x+y:= x\oplus y$, see \cite{Dvu2}.  We note that if $M=\Gamma(G,u)$, then $+$ is the restriction of the group addition of the group $G$ to the interval $[0,u]$.

For any integer $n\ge 0$ and any $x\in M$, we can define
\begin{align*}
0.x &=0, \quad  \text{and}\quad n.x=(n-1).x\oplus x,\quad n\ge 1,\\
0x&=0, \quad \text{and}\quad nx= (n-1)x+x,\quad n\ge 1,
\end{align*}
assuming $(n-1)x$ and $(n-1)x+x$ are defined in $M$.
An element $x\in M$ is called a {\em Boolean element} if $x\oplus x=x$. The set $\B(M)$ denotes the set of all Boolean elements of $M$, which is a Boolean algebra and a subalgebra of $M$. By a {\em degenerate} pseudo MV-algebra we mean a pseudo MV-algebra $(M;\oplus,^-,^\sim,0,1)$ with $0= 1$.
\end{defn}

In what follows, we will assume usually that $M$ is such that $0\ne 1$.

It is well-known if $(M;\oplus,^-,^\sim,0,1)$ is a pseudo MV-algebra, then, for each $a\in \B(M)$, the interval $([0,a];\oplus,^{-a},^{\sim a},0,a)$  is a pseudo MV-algebra, where $x^{-a}=x^-\wedge a$ and $x^{\sim a}=x^\sim\wedge a$, and the mapping $\phi_a:M\to M_a:=[0,a]$, defined by $\phi_a(x)=a\wedge x$, $x\in M$, is a surjective homomorphism of pseudo MV-algebras.

In each pseudo MV-algebra $(M;\oplus,^-,^\sim,0,1)$, we can define two additional binary operations $\to$ and $\rightsquigarrow$ by
$$
x\to y:=x^-\oplus y,\quad x\rightsquigarrow y:=y\oplus x^\sim.
$$

\begin{prop}\label{PMV-prop}
In any pseudo MV-algebra $(M;\oplus,^-,^\sim,0,1)$, the following properties hold for all $x,y,z\in M$:
\begin{itemize}[nolistsep]
\item[{\rm (i)}] $x\odot (x\to y)=x\wedge y\leq y$, and $(x\rightsquigarrow y)\odot x=x\wedge y\leq y$.
\item[{\rm (ii)}] $x\vee y=(x\to y)\rightsquigarrow y=(x\rightsquigarrow y)\to y=
(x\to y)\rightsquigarrow y=(x\rightsquigarrow y)\to y$.
\item[{\rm (iii)}] $(x\odot y)\to z=y\to (x\to z)$ and $(x\odot y)\rightsquigarrow z=x\rightsquigarrow (y\rightsquigarrow z)$.
\item[{\rm (iv)}] $x\to (x\odot y)=(x\to 0)\vee y$ and  $x\rightsquigarrow (y\odot x)=(x\rightsquigarrow 0)\vee y$.
\item[{\rm (v)}] $x\to (y\rightsquigarrow z)=y\rightsquigarrow (x\to z)$.
\item[{\rm (vi)}] $x\odot y\leq z \Longleftrightarrow y\leq x\to z \Longleftrightarrow x\leq y\rightsquigarrow z$.
\item[{\rm (vii)}] $x\wedge y=x\odot (x\to y)=y\odot (y\to x)=(y\rightsquigarrow x)\odot y=(x\rightsquigarrow y)\odot x$.
\item[{\rm (viii)}] $x\to (y\wedge z)=(x\to y)\wedge (x\to z)$ and $x\rightsquigarrow (y\wedge z)=(x\rightsquigarrow y)\wedge (x\rightsquigarrow z)$.
\item[{\rm (ix)}] $x\to (y\wedge z)=(x\to y)\odot ((x\wedge y)\to z)$ and $x\rightsquigarrow (y\wedge z)=((x\wedge y)\rightsquigarrow z)\odot (x\rightsquigarrow y)$.
\item[{\rm (x)}] $(x\wedge y)\to z=(x\to z)\vee (y\to z)$ and $(x\wedge y)\rightsquigarrow z=(x\rightsquigarrow z)\vee (y\rightsquigarrow z)$.
\item[{\rm (xi)}] If $x\leq y$, then $y\ra z\leq x\ra z$ and $y\rightsquigarrow z\leq x\rightsquigarrow z$.
\end{itemize}
\end{prop}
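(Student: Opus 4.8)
The plan is to isolate a handful of base facts and derive everything else formally. For the base facts the safest tool is the representation of Theorem~\ref{functor}: writing $M=\Gamma(G,u)$, the operations on $[0,u]$ become $x\oplus y=(x+y)\wedge u$, $x\odot y=(x-u+y)\vee 0$, $x^-=u-x$, $x^\sim=-x+u$, so $x\to y=(u-x+y)\wedge u$ and $x\rightsquigarrow y=(y-x+u)\wedge u$; one then only needs that in an $\ell$-group both one-sided translations are lattice automorphisms (hence $+$ distributes over $\vee$ and $\wedge$ on each side), that $-$ exchanges $\vee$ and $\wedge$, and the bounds $0\le x,y,z\le u$ (which dispose of the clamps: $u+z\ge u$, $2u-y\ge u$, etc.). With this, (vi) is two lines — $x\odot y\le z$ iff $x-u+y\le z$ (as $z\ge 0$) iff $y\le u-x+z$ iff $y\le(u-x+z)\wedge u=x\to z$, and symmetrically for $\rightsquigarrow$ — and the same style gives associativity of $\odot$, namely $(x\odot y)\odot z=(x-u+y-u+z)\vee 0=x\odot(y\odot z)$, and the telescoping identity $x\odot(x\to y)=\bigl((x-u)+((u-x+y)\wedge u)\bigr)\vee 0=\bigl((x-u+u-x+y)\wedge x\bigr)\vee 0=(y\wedge x)\vee 0=x\wedge y$ behind (i) and (vii); note that no commutativity of $G$ intervenes, the only cancellations being of the shape $-u+u$. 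Alternatively, (i), (vii) are just axiom (A7) read as the definition of $\wedge$, plus the involution $(x^-)^\sim=(x^\sim)^-=x$ for the outer equalities; (ii) is axiom (A6) (which exhibits $x\vee y$) combined with the De~Morgan-type identities $(x^-\oplus y)^\sim=y^\sim\odot x$ and $(x\oplus y^\sim)^-=x\odot y^-$ coming from (A5), (A8); and (iv) is $x\to(x\odot y)=x^-\oplus(x\odot y)=x^-\vee y=(x\to 0)\vee y$, again straight from (A6) and (A8).

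Granting (i), (vi), (vii), the remaining items are formal. (xi): if $x\le y$ then $x\odot w\le y\odot w$, so $y\odot w\le z$ implies $x\odot w\le z$, i.e. $w\le y\to z$ implies $w\le x\to z$; take $w=y\to z$. (iii): $w\le(x\odot y)\to z$ iff $(x\odot y)\odot w\le z$ iff $x\odot(y\odot w)\le z$ iff $y\odot w\le x\to z$ iff $w\le y\to(x\to z)$, using (vi) and associativity. (v): both $w\le x\to(y\rightsquigarrow z)$ and $w\le y\rightsquigarrow(x\to z)$ are equivalent, via (vi) applied twice and associativity of $\odot$, to $x\odot w\odot y\le z$. (viii): $w\le x\to(y\wedge z)$ iff $x\odot w\le y$ and $x\odot w\le z$ iff $w\le(x\to y)\wedge(x\to z)$. (x): $x\wedge y\le x$ gives $x\to z\le(x\wedge y)\to z$ by (xi) and likewise with $y$, so $(x\to z)\vee(y\to z)\le(x\wedge y)\to z$; conversely, since $c\odot-$ and $-\odot c$ preserve joins (immediate from (vi)), $(x\wedge y)\odot\bigl((x\to z)\vee(y\to z)\bigr)=\bigl((x\wedge y)\odot(x\to z)\bigr)\vee\bigl((x\wedge y)\odot(y\to z)\bigr)\le\bigl(x\odot(x\to z)\bigr)\vee\bigl(y\odot(y\to z)\bigr)=(x\wedge z)\vee(y\wedge z)\le z$, whence $(x\wedge y)\to z\le(x\to z)\vee(y\to z)$. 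In each case the $\rightsquigarrow$-variant follows from the $\to$-variant by the duality exchanging $x\mapsto x^-$ with $x\mapsto x^\sim$, so the two halves need not be treated separately.

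The one item that is not a one-liner is (ix). I would argue: $x\to(y\wedge z)=(x\to y)\wedge(x\to z)$ by (viii); put $a=x\to y$, so $x\odot a=x\wedge y$ by (vii), hence $(x\wedge y)\to z=(x\odot a)\to z=a\to(x\to z)$ by (iii); therefore $(x\to y)\odot\bigl((x\wedge y)\to z\bigr)=a\odot\bigl(a\to(x\to z)\bigr)=a\wedge(x\to z)=(x\to y)\wedge(x\to z)$, once more by (vii). The $\rightsquigarrow$-version runs identically with the second-residual forms of (viii), (iii), (vii). The only places that call for care are the clean derivation of the adjunction (vi) and of the divisibility law $x\wedge y=x\odot(x\to y)$ — for which the $\Gamma$-picture is the most reliable route — and the disciplined use of the same $x\mapsto x^-$ / $x\mapsto x^\sim$ mirror so the two halves of each item come for free; beyond this bookkeeping I do not anticipate a genuine obstacle.
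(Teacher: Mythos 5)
Your proposal actually proves the proposition, whereas the paper simply delegates it to \cite{georgescu,Dvu1}; in that sense the comparison is moot and your architecture (establish (i), (vi) and associativity of $\odot$ via the $\Gamma(G,u)$ picture, then derive everything else by residuation, with the $^-/^\sim$ mirror handling the second halves) is sound and efficient. Items (i)--(ix) and (xi) check out: the telescoping computations use only cancellations of adjacent inverse pairs, so non-commutativity of $G$ is indeed harmless, and your derivation of (ix) from (viii), (iii) and (vii) is clean and correct.

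There is, however, one genuine gap, in the ``converse'' half of (x). Your displayed computation shows
$(x\wedge y)\odot\bigl((x\to z)\vee(y\to z)\bigr)\le z$, and by the adjunction (vi) this is \emph{equivalent} to
$(x\to z)\vee(y\to z)\le (x\wedge y)\to z$ --- i.e.\ exactly the inequality you had already obtained from (xi) in the first half. The inequality you actually need, $(x\wedge y)\to z\le (x\to z)\vee(y\to z)$, is never established, and it cannot be: in a general residuated lattice only the $\ge$ direction holds, so no purely adjunction-theoretic argument will close this. You need a genuinely MV-specific input. Either compute in $\Gamma(G,u)$, where
$(x\wedge y)\to z=\bigl(((u-x)\vee(u-y))+z\bigr)\wedge u=\bigl((u-x+z)\wedge u\bigr)\vee\bigl((u-y+z)\wedge u\bigr)$
because right translation is a lattice automorphism and the lattice of an $\ell$-group is distributive; or argue algebraically via
$(x\wedge y)\to z=(x\wedge y)^-\oplus z=(x^-\vee y^-)\oplus z=(x^-\oplus z)\vee(y^-\oplus z)$,
using the distributivity of $\oplus$ over $\vee$ in pseudo MV-algebras (\cite[Prop 1.16]{georgescu}). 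With either repair, (x) and hence the whole proposition goes through.
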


\begin{proof}
The proofs follow from the properties of pseudo MV-algebras which are stated in \cite{georgescu,Dvu1}.
\end{proof}
A non-empty subset $I$ of a pseudo MV-algebra $M$ is called an {\it ideal} if (1) for each $y\in M$, $y\leq x\in I$ implies that $y\in I$; (2) $I$ is closed under $\oplus$. An ideal $I$ of $M$ is said to be (i) {\it prime} if $x\wedge y \in I$ implies $x \in I$ or $y \in I$; (ii) {\it normal} if $x\oplus I=I\oplus x$ for any $x \in M$, where $x\oplus I:=\{x\oplus i \mid i \in I\}$ and $I\oplus x =\{i\oplus x \mid i \in I\}$.
We recall that an ideal $I$ is normal \iff given $x,y \in M$,  $x\odot y^-\in I$ \iff $y^\sim \odot x \in I$, \cite[Lem 3.2]{georgescu}. We note that the kernel of any homomorphism of pseudo MV-algebras is a normal ideal.

The set of prime ideals of $M$ is denoted by $\spec(M)$. Equivalent conditions, \cite[Thm 2.17]{georgescu}, for an ideal $I$ to be prime are as follows:
\begin{itemize}[nolistsep]
\item[{\rm (P1)}]  $x\odot y^- \in I$ or $y\odot x^-$ for all $x,y \in M$.
\item[{\rm (P2)}] $x\odot y^\sim \in I$ or $y\odot x^\sim$ for all $x,y \in M$.
\end{itemize}
A one-to-one relationship exists between congruences and normal ideals of a pseudo MV-algebra, \cite[Cor. 3.10]{georgescu}: If $I$ is a normal ideal of a pseudo MV-algebra, then the relation $\sim_I$, defined by $x\sim_I y$ \iff $x\odot y^-, y\odot x^-\in I$, is a congruence,
and $M/I$ with the following operations is a pseudo MV-algebra, where $M/I=\{x/I\colon x\in M\}$ and $x/I$ is the equivalence class containing $x$.
\begin{eqnarray*}
x/I\oplus y/I=(x\oplus y)/I,\quad (x/I)^-=x^-/I,\quad (x/I)^\sim=x^\sim/I,\quad 0/I,\quad 1/I.
\end{eqnarray*}
Conversely, if $\theta$ is a congruence on $M$, then $I_\theta =\{x \in M \mid x\sim 0\}$ is a normal ideal such that $\sim_{I_\theta}=\sim$.

A pseudo MV-algebra $M$ is said to be {\it representable} if $M$ is a subdirect product of a system of linearly ordered pseudo MV-algebras.
By \cite[Prop. 6.9]{DvuS}, $M$ is representable \iff $a^\bot=\{x \in M\mid x\wedge a =0\}$ is a normal ideal of $M$ for each $a \in M$. Moreover, $M=\Gamma(G,u)$ is representable iff $G$ is a representable $\ell$-group, see \cite[Prop. 6.9]{DvuS}.

%%%%%%%%%%%%%%%%  Section 3 %%%%%%%%%%%%%%%%%%%%%%%%%%%%%%%%%
%%%%%%%%%%%%%%%%%%%%%%%%%%%%%%%%%%%%%%%%%%%%%%%%%%%%%%%%%%%%%

\section{Square Roots on Pseudo MV-algebras}%3

The aim of the section is to define square roots on pseudo MV-algebras and present their basic properties. We define a square root and a weak square root on a pseudo MV-algebra. These notions coincide for MV-algebras, but for pseudo MV-algebras they can be different.

A square root on MV-algebras was originally defined in \cite{Hol}:

\begin{defn}\cite{Hol}
Let $M$ be an MV-algebra. A mapping $s:M\to M$ is called a {\em square root} if it satisfies the following conditions:
\begin{enumerate}[nolistsep]
\item[(i)] for all $x\in M$, $s(x)\odot s(x)=x$;
\item[(ii)] for all $x,y\in M$, $y\odot y\leq x$ implies $y\leq s(x)$.
\end{enumerate}
\end{defn}

In \cite[Prop 2.11(xxi)]{Hol}, it was shown that $s(x')=s(x)\to s(0)$. However, for pseudo MV-algebras, we need a stronger definition of the square root.

\begin{defn}\label{3.1}
Let $(M;\oplus,^-,^\sim,0,1)$ be a pseudo MV-algebra. A mapping $r:M\to M$ is said to be (i) a {\em square root} if it satisfies the following conditions:
\begin{itemize}
\item[{\rm (Sq1)}] for all $x\in M$, $r(x)\odot r(x)=x$;

\item[{\rm (Sq2)}] for each $x,y\in M$, $y\odot y\leq x$ implies $y\leq r(x)$;

\item[{\rm (Sq3)}] for each $x\in M$, $r(x^-)= r(x)\to r(0)$ and $r(x^\sim)=r(x)\rightsquigarrow r(0)$,
\end{itemize}
and (ii) a {\it weak square root} if it satisfies only (Sq1) and (Sq2). For MV-algebras these notions coincide but for pseudo MV-algebras they could be different as we show in Examples \ref{example}--\ref{example1} below.

If $M$ is degenerate, then $r=\id_M$ is trivially both a square root and a weak square root.

A pseudo MV-algebra $(M;\oplus,^-,^\sim,0,1)$ has {\em square roots} ({\it weak square roots}) if there exists a square root (weak square root) $r$ on $M$. Sometimes, one writes $x^{1/2}$ for square roots $r(x)$, see e.g. \cite{Hol}.
A {\em standard} square root is a square root (weak square root) $r$ satisfying the following additional condition:

\begin{itemize}
\item[{\rm (Sq4)}] for all $x\in M$, $r(x)\odot r(0)=r(0)\odot r(x)$.
\end{itemize}
For each $m\in\mathbb N$ and each $x\in M$, we define $r^0(x)=x$, $r^1(x)=r(x)$, and $r^{m+1}=r\circ r^{m}(x)=r(r^m(x))$, $m\ge 1$.
\end{defn}

We note that any weak square root $r:M\to M$ is a one-to-one map, indeed if $r(x)=r(y)$, then $x=r(x)\odot r(x)=r(y)\odot r(y)=y$. Moreover, if
$r_1$ and $r_2$ are two weak square roots on a pseudo MV-algebra $M$, then $r_1=r_2$. Indeed, by (Sq2), for each $x\in M$
$r_1(x)\odot r_1(x)\leq x$ implies $r_1(x)\leq r_2(x)$. In a similar way, $r_2(x)\leq r_1(x)$. That is $r_1=r_2$.
Now, we gather the main properties of square roots on pseudo MV-algebras that will be used in the sequel.
First, we note that if $r$ is a weak square root on a pseudo MV-algebra $M$, then $r(x)\odot r(y)=r(y)\odot r(x)$ implies that $x\odot y=y\odot x$. Indeed,
$x\odot y=(r(x)\odot r(x))\odot (r(y)\odot r(y))=(r(y)\odot r(y))\odot (r(x)\odot r(x))=y\odot x$.

\begin{prop}\label{3.2}
Let $r$ be a square root on a pseudo MV-algebra $(M;\oplus,^-,^\sim,0,1)$. Then for each $x,y\in M$, we have:
\begin{itemize}[nolistsep]
\item[{\rm (1)}] $x\leq x\vee r(0)\leq r(x)$, $r(1)=1$, $(r(x)\odot r(0))\vee(r(0)\odot r(x))\leq x$ and $r(x)\odot x=x\odot r(x)$.
\item[{\rm (2)}] $x\leq y$ implies that $r(x)\leq r(y)$.
\item[{\rm (3)}] $x\wedge y\leq r(x)\odot r(y), r(y)\odot r(x)$ and if $a\in \B(M)$ such that $a\leq r(0)$, then $a=0$.
\item[{\rm (4)}] $x\leq r(x\odot x)$ and $r(x\odot x)\odot r(x\odot x)=r(x)\odot r(x)\odot r(x)\odot r(x)=x\odot x$.
\item[{\rm (5)}] $(x\wedge x^-)\vee (x\wedge x^\sim)\leq r(0)$.
\item[{\rm (6)}] $r(x)\in \B(M)$ \iff $r(x)=x$.
\item[{\rm (7)}] $r(x)\wedge r(y)=r(x\wedge y)$.
\item[{\rm (8)}] $r(x)\ra r(y)\le r(x\ra y)$ and $r(x)\rightsquigarrow r(y)\le r(x\rightsquigarrow y)$. Moreover, $r(x)\odot r(y)\leq r(x\odot y)$ for all $x,y\in M$ if and only if $r(x)\ra r(y)= r(x\ra y)$ and $r(x)\rightsquigarrow r(y)= r(x\rightsquigarrow y)$.

\item[{\rm (9)}] $r(x\vee y)=r(x)\vee r(y)$.

\item[{\rm (10)}]
$r(x\odot y)\leq (r(x)\odot r(y))\vee r(0)$ and $r(x\odot x)=(r(x)\odot r(x))\vee r(0)$. Consequently, if $r(0)\leq x$, then $r(x\odot x)=x$.

\item[{\rm (11)}] {\rm (a)} $x\in \B(M)$ \iff $r(x)=x\oplus r(0)$ \iff $r(x)=r(0)\oplus x$.

\noindent
{\rm (b)} $(r(0)\ra 0)\odot (r(0)\ra 0)=(r(0)\rightsquigarrow 0)\odot (r(0)\rightsquigarrow 0)\in \B(M)$.

\noindent
{\rm(c)} If $a,b \in M$, $a\le b$, then $r([a,b])=[r(a),r(b)]$. In particular, $r(M)=[r(0),1]$.

\item[{\rm (12)}] $y\leq (r(x)\odot r(y))\wedge (r(y)\odot r(x))$ implies that $y\leq x$.

\item[{\rm (13)}] For each idempotent element $a\in \B(M)$, the map $r_a:[0,a]\to [0,a]$ sending $x$ to $r(x)\odot a$ is a square root on the pseudo MV-algebra $[0,a]$.

\item[{\rm (14)}] $r(x\oplus y)\ge(r(x)\odot r(0)^-)\oplus r(y)$. Moreover, if $r(x)\odot r(y)\leq r(x\odot y)$ for all $x,y\in M$, then the equality holds.

\item[{\rm (15)}] For each $m,n\in\mathbb N$ with $n\leq m$, $(r^m(x))^{2^n}=r^{m-n}(x)$.
\end{itemize}
Properties {\rm (1)--(8)} hold also for each weak square root on $M$.
\end{prop}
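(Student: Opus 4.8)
Putting (Sq1) and (Sq2) together is the engine for almost the whole proposition: using monotonicity of $\odot$ in each variable for one direction and (Sq2) for the other,
\begin{equation*}
a\le r(b)\quad\Longleftrightarrow\quad a\odot a\le b\qquad(a,b\in M).
\end{equation*}
Besides this I would record the elementary facts $x\odot x\le x$, $x\odot x^-=x^\sim\odot x=0$, that $\odot$ distributes over $\vee$ on both sides, and that $\B(M)=\{a\in M:a\odot a=a\}=\{a\in M:a\oplus a=a\}$. Then the ``lattice-type'' items are immediate: (2) since $x=r(x)\odot r(x)\le y$; (1) from $x\odot x\le x$, $0\le x$, $1\odot 1=1$, then $r(x)\odot r(0)\le r(x)\odot r(x)=x$, and $r(x)\odot x=(r(x)\odot r(x))\odot r(x)=x\odot r(x)$ by associativity; (4) from $x\odot x\le x\odot x$; (5) from $(x\wedge x^-)\odot(x\wedge x^-)\le x\odot x^-=0$ and its $^\sim$-twin; (7) from $(r(x)\wedge r(y))\odot(r(x)\wedge r(y))\le x\wedge y$ plus the trivial reverse inequality; (3) from $x\wedge y=r(x\wedge y)\odot r(x\wedge y)\le r(x)\odot r(y)$ and, for the Boolean clause, the displayed equivalence with $b=0$. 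Item (6) is the same circle: $r(x)\in\B(M)\Rightarrow x=r(x)\odot r(x)=r(x)$, and $r(x)=x\Rightarrow x\odot x=x\Rightarrow x\in\B(M)$.

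Still using only (Sq1)--(Sq2) and Proposition~\ref{PMV-prop}: for (12), from $y\le r(x)\odot r(y)$ and $y\le r(y)\odot r(x)$ the residuation identities (iv),(vi) give $r(y)\le(r(y)^-\vee r(x))\wedge(r(y)^\sim\vee r(x))=(r(y)^-\wedge r(y)^\sim)\vee r(x)$; meeting with $r(y)$ and invoking (5),(7) yields $r(y)\le r(x\wedge y)$, hence $y=r(y)\odot r(y)\le x\wedge y\le x$. Item (13) is a direct check in $[0,a]$ with $r_a(x)=r(x)\odot a=r(x)\wedge a$, and (15) is an easy induction on $n$ from (Sq1). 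For (8), the two inequalities $r(x)\to r(y)\le r(x\to y)$ and $r(x)\rightsquigarrow r(y)\le r(x\rightsquigarrow y)$ reduce through the displayed equivalence to $x\odot(r(x)\to r(y))\odot(r(x)\to r(y))\le y$ and its mirror, which I would unwind using Proposition~\ref{PMV-prop}(i),(iii),(vi); since no step uses (Sq3) or (Sq4), this is exactly why (1)--(8) persist for weak square roots.

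The remaining items genuinely use (Sq3), and the recurring device is to transport an equation across the mutually inverse bijections $^-$ and $^\sim$. For the ``Moreover'' in (8): from $(x\odot y)^-=y\to x^-$ and (Sq3), Proposition~\ref{PMV-prop}(iii) gives $r\big((x\odot y)^-\big)=r(y)\to\big(r(x)\to r(0)\big)=\big(r(x)\odot r(y)\big)\to r(0)$, to be compared with $r\big((x\odot y)^-\big)=r(x\odot y)\to r(0)$; the hypothesis $r(x)\odot r(y)\le r(x\odot y)$ then forces the displayed equalities (and the converse is the same computation read backwards). For (9): distributivity of $\odot$ over $\vee$ together with (2) gives $(r(x)\vee r(y))\odot(r(x)\vee r(y))=x\vee y$, so $r(x)\vee r(y)\le r(x\vee y)$, while the reverse comes from $(x\vee y)^-=x^-\wedge y^-$, (Sq3), (7), injectivity of $r$, and $r(M)=[r(0),1]$ from (11)(c). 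For (11)(a) I would split $r(x)=(r(x)\wedge x)\vee(r(x)\wedge x^-)$ for Boolean $x$ and show $r(x)\wedge x^-\le r(0)$ via the displayed equivalence; (11)(c) follows from the corollary of (10) that $r(0)\le s\Rightarrow r(s\odot s)=s$; and (11)(b) is the assertion that $(r(0)\to 0)^{\odot 2}$ is $\odot$-idempotent (hence Boolean) and equals $(r(0)\rightsquigarrow 0)^{\odot 2}$, again obtained from (Sq3) and the corollary of (10).

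The main obstacle is (10), i.e.\ the \emph{upper} bound $r(x\odot y)\le(r(x)\odot r(y))\vee r(0)$ and the equality $r(x\odot x)=x\vee r(0)$, together with (14): these are upper estimates on $r$, a type of statement (Sq2) never hands over directly, so they must be routed through (Sq3), the adjunctions of Proposition~\ref{PMV-prop}(iii)--(vi), and -- for (14) -- the partial addition $+$ of $M$ viewed inside the unital $\ell$-group $(G,u)$ with $M\cong\Gamma(G,u)$. Once (10) is available its corollary feeds (11)(b),(c) and the reverse inequality in (9), and everything else is assembled from the parts already proved; the final clause then needs no extra argument, since none of the proofs of (1)--(8) appealed to (Sq3) or (Sq4).
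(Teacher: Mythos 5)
Your key equivalence $a\le r(b)\Leftrightarrow a\odot a\le b$ and the arguments for (1)--(7), (12), (13), (15) are sound (your route to (12) via the residuation identities is a legitimate alternative to the paper's computation). But the proposal stops exactly where the substance is. First, item (10): you name it as ``the main obstacle'' and say it must be routed through (Sq3) and the adjunctions, but you never produce the derivation. The paper's proof is a specific, non-obvious chain: write $x\odot y=\big((x\odot y)\to 0\big)\rightsquigarrow 0$, use (Sq3) to get $r(x\odot y)=r\big(y\to(x\to 0)\big)\rightsquigarrow r(0)$, bound this by $\big(r(y)\to(r(x)\to r(0))\big)\rightsquigarrow r(0)$ via (8) and Proposition \ref{PMV-prop}(xi), and collapse the result to $(r(x)\odot r(y))\vee r(0)$ using Proposition \ref{PMV-prop}(ii)--(iii). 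Without it, (11)(a)--(c), the ``consequently'' clause, and your own route to the reverse inequality in (9) (which you hang on (11)(c)) all remain unproved. Second, the first inequality of (8) is only reduced, not proved: establishing $x\odot(r(x)\to r(y))^{2}\le y$ is not a routine unwinding --- the naive computation via $r(x)\odot(r(x)\to r(y))=r(x)\wedge r(y)$ stalls at the bound $r(x)\odot r(y)$, which need not be $\le y$; the paper has to run the estimate with $x\wedge y$ in front so that the leftmost factor becomes $r(x\wedge y)\le r(y)$. Third, for (14) you propose passing to the $\ell$-group, which you do not carry out; the paper gets it in one line from (8) applied to $x\oplus y=x^{\sim}\to y$.

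There is also an internal inconsistency. You place the ``Moreover'' of (8) among the items that ``genuinely use (Sq3)'' (and, as written, your step $r\big((x\odot y)^-\big)=r(y)\to(r(x)\to r(0))$ already presupposes $r(y\to z)=r(y)\to r(z)$, i.e.\ the conclusion), yet the final clause of the proposition asserts that all of (1)--(8), including that equivalence, persist for weak square roots. The paper's argument for the ``Moreover'' uses only the hypothesis $r(x)\odot r(y)\le r(x\odot y)$, monotonicity, and $x\odot(x\to y)=x\wedge y$, so it is (Sq3)-free; your version would not justify the last sentence of the statement. Finally, (11)(a) is an equivalence of three statements and you only sketch one implication --- the converse in the paper comes from the identity $r(y\oplus y)=y\oplus r(0)=r(0)\oplus y$ together with injectivity of $r$ --- and (11)(b) is asserted but never derived.
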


\begin{proof}
(1) $x\leq r(x)$ follows from (Sq1). On the other hand by \cite[Prop 1.16]{georgescu},
$(x\vee r(0))\odot (x\vee r(0))=(x\odot x)\vee (r(0)\odot x)\vee (x\odot r(0))\vee (r(0)\odot r(0))=
(x\odot x)\vee (r(0)\odot x)\vee (x\odot r(0))\vee 0\leq x$, so (Sq2) implies that $x\vee r(0)\leq r(x)$.
For each $x\in M$, $r(x)\odot x=r(x)\odot r(x)\odot r(x)=x\odot r(x)$.
Also, $r(x)\odot r(0),r(0)\odot r(x)\leq r(x)\odot r(x)=x$.

(2) From (Sq2) and $r(x)\odot r(x)=x\leq y$, we have $r(x)\leq r(y)$.

(3) By (2), $x\wedge y=r(x\wedge y)\odot r(x\wedge y)\leq r(x)\odot r(y)$.
Now, let $a\in \B(M)$ be such that $a\leq r(0)$. Then $a=a\odot a\leq r(0)\odot r(0)=0$.

(4) $x\odot x\leq x\odot x$ and (Sq2) imply that $x\leq r(x\odot x)$. The second part follows from
(Sq1): Indeed,
$r(x\odot x)\odot r(x\odot x)=x\odot x=r(x)\odot r(x)\odot r(x)\odot r(x)=x\odot x$.

(5)  $(x\wedge x^-)\odot (x\wedge x^-)\leq x\odot x^-=0$. Similarly,
$(x\wedge x^\sim)\odot (x\wedge x^\sim)\leq x^\sim\odot x=0$. Thus by (Sq2),
$x\wedge x^\sim,x\wedge x^-\leq r(0)$.

(6) If $r(x)\in \B(M)$, then by (Sq1), $x=r(x)\odot r(x)=r(x)\in\B(M)$. Conversely, if $r(x)=x$, then
$x=r(x)\odot r(x)\leq r(x)=x$ and so $r(x)\in\B(M)$.

(7) From (2) and $x\wedge y\leq x,y$, we have that $r(x\wedge y)\leq r(x)\wedge r(y)$. Also,
$\big(r(x)\wedge r(y) \big)\odot \big(r(x)\wedge r(y) \big)=(r(x)\odot r(x))\wedge (r(x)\odot r(y))\wedge
(r(y)\odot r(x))\wedge (r(y)\odot r(y))\leq x\wedge y$, so by (Sq2),
$r(x)\wedge r(y)\leq r(x\wedge y)$.

(8) Let $x,y\in M$. Then
\begin{eqnarray*}
(x\wedge y)\odot (r(x)\ra r(y))\odot (r(x)\ra r(y))&=& r(x\wedge y)\odot r(x\wedge y)\odot (r(x)\ra r(y))\odot (r(x)\ra r(y))\\
&\leq& r(x\wedge y)\odot r(x)\odot (r(x)\ra r(y))\odot (r(x)\ra r(y))\\
&=& r(x\wedge y)\odot (r(x)\wedge r(y))\odot (r(x)\ra r(y))\quad \text{by Prop \ref{PMV-prop}(i)}\\
&\leq& r(x\wedge y)\odot r(x)\odot (r(x)\ra r(y))\\
&\leq& r(x\wedge y)\odot r(y) \quad \text{by Prop \ref{PMV-prop}(i)}\\
&\leq& r(y)\odot r(y)=y.
\end{eqnarray*}
From Proposition \ref{PMV-prop}(vi), it follows that $(r(x)\ra r(y))\odot (r(x)\ra r(y))\leq (x\wedge y)\ra y=x\ra y$ and so by
(Sq1), $r(x)\ra r(y)\leq r(x\ra y)$.

Now, let $r(x)\odot r(y)\leq r(x\odot y)$ for all $x,y\in M$. Then
$r(x)\odot r(x\ra y)\leq r(x\odot (x\ra y))\leq r(y)$ which entails $r(x\ra y)\leq r(x)\ra r(y)$.
Therefore, $r(x\ra y)= r(x)\ra r(y)$.

Conversely, let $r(x\ra y)= r(x)\ra r(y)$ for all $x,y\in M$. Then from \cite[Prop 1.13]{georgescu} it follows that
$r(x)\ra r(x\odot y)=r(x\ra (x\odot y))=r(x^-\vee y)\geq r(y)$
and so $r(x)\odot r(y)\leq r(x\odot y)$.

Similarly, we can prove the relations with $\rightsquigarrow$.

(9) By part (7) and (Sq3), we have
\begin{eqnarray*}
r(x\vee y)&=& r\big(((x\to 0)\wedge (y\to 0))\rightsquigarrow 0\big)=\big(r(x\to 0)\wedge r(y\to 0)\big)\rightsquigarrow r(0)\\
&=& \big(r(x\to 0)\rightsquigarrow r(0) \big)\vee \big(r(y\to 0)\rightsquigarrow r(0)\big) \quad\text{by Prop \ref{PMV-prop}(x)}\\
&=& r((x\to 0)\rightsquigarrow 0)\vee r((y\to 0)\rightsquigarrow 0)=r(x)\vee r(y) \quad\text{by Prop \ref{PMV-prop}(ii)}.
\end{eqnarray*}

(10) By parts (7) and (8),
\begin{eqnarray*}
r(x\odot y)&=& r(((x\odot x)^-)^\sim)= r(((x\odot y)\ra 0)\rightsquigarrow 0)\\
&=& r\big((y\ra (x\ra 0))\rightsquigarrow 0\big)=r(y\ra (x\ra 0))\rightsquigarrow r(0)\quad \mbox{ by (Sq3)}\\
&\leq&\big(r(y)\ra r(x\ra 0)\big)\rightsquigarrow r(0) \mbox{ by (8) and Proposition \ref{PMV-prop}(xi)}\\
&=& \big(r(y)\ra (r(x)\ra r(0))\big)\rightsquigarrow r(0)\quad \mbox{ by (Sq3)}\\
&=& \big((r(x)\odot r(y))\ra r(0)\big)\rightsquigarrow r(0)\quad \mbox{ by (Sq3)}\\
&=& (r(x)\odot r(y))\vee r(0)\quad  \mbox{ by Proposition \ref{PMV-prop}(ii)}.
\end{eqnarray*}
It follows that $r(x\odot x)\leq (r(x)\odot r(x))\vee r(0)=x\vee r(0)$ for all $x\in M$. Also,
by parts (1) and (4), $x,r(0)\leq r(x\odot x)$, so $x\vee r(0)\leq  r(x\odot x)$.
Therefore, $r(x\odot x)=x\vee r(0)$ for all $x\in M$.

(11) 
First, we claim that
\begin{eqnarray}\label{y+y}
r(y\oplus y)=y\oplus r(0)=r(0)\oplus y,\quad \forall y\in M.
\end{eqnarray}
Check
\begin{eqnarray*}
r(y\oplus y)&=& r((y^\sim\odot y^\sim)^-)=r(y^\sim\odot y^\sim)\ra r(0) \mbox{ by (Sq3)}\\
&=& \big((r(y^\sim)\odot r(y^\sim))\vee r(0)\big)\ra r(0)=(y^\sim \vee r(0))\ra r(0) \mbox{ by (10)} \\
&=& (y^\sim \ra r(0))\wedge (r(0)\ra r(0))=y^\sim \ra r(0)=y\oplus r(0). \\
r(y\oplus y)&=& r((y^-\odot y^-)^\sim)=r(y^-\odot y^-)\rightsquigarrow r(0) \mbox{ by (Sq3)}\\
&=& \big((r(y^-)\odot r(y^-))\vee r(0)\big)\rightsquigarrow r(0)=(y^- \vee r(0))\rightsquigarrow r(0) \mbox{ by (10)} \\
&=& (y^- \rightsquigarrow r(0))\wedge (r(0)\rightsquigarrow r(0))=r(0)\oplus y.
\end{eqnarray*}

(a) We can show that $x\in \B(M)$ \iff $r(x)=x\oplus r(0)$ \iff $r(x)=r(0)\oplus x$. Indeed, if $x\in \B(M)$, then \eqref{y+y} yields $r(x)=x\oplus r(0)$. Conversely, let $r(x)=x\oplus r(0)$. Then (\ref{y+y}) implies $r(x\oplus x)=x\oplus r(0)=r(x)$ consequently, by (6), $x\oplus x=x$ and $x\in\B(M)$.

(b) Now, we show that $(r(0)\ra 0)\odot (r(0)\ra 0)\in \B(M)$. By the last paragraph, it suffices to prove that
$r((r(0)\ra 0)\odot (r(0)\ra 0))=((r(0)\ra 0)\odot (r(0)\ra 0))\oplus r(0)$. By (10),
$r((r(0)\ra 0)\odot (r(0)\ra 0))=(r(0)\ra 0)\vee r(0)$. On the other hand,
$((r(0)\ra 0)\odot (r(0)\ra 0))\oplus r(0)=(r(0)^-\odot r(0)^-)\oplus r(0)=r(0)^-\vee r(0)$ (see \cite[Prop 1.13(a)]{georgescu}).
It follows that $(r(0)\ra 0)\odot (r(0)\ra 0)\in \B(M)$. Using (\ref{y+y}) in a similar way, we can show that
$(r(0)\rightsquigarrow 0)\odot (r(0)\rightsquigarrow 0)\in \B(M)$.

(c)  Let $a,b\in M$ with $a\le b$  be given. By (2), $r([a,b])\s[r(a),r(b)]$. Let $r(a)\leq y\leq r(b)$.
Then $y=y\vee r(0)$ (since $y\geq r(a)\geq r(0)$) and by (10), $y=r(y\odot y)$. From
$a=r(a)\odot r(a)\leq y\odot y\leq r(b)\odot r(b)=b$ it follows that $y=r(y\odot y)\in r([a,b])$ consequently,
$r([a,b])=[r(a),r(b)]$.

As a consequence, we have $r(M)=r([0,1])=[r(0),r(1)]=[r(0),1]$.

(12) Let $y\leq r(y)\odot r(x),r(x)\odot r(y)$. Then
$r(y)\odot (r(x)\wedge r(y))=(r(y)\odot r(x))\wedge (r(y)\odot r(y))=(r(y)\odot r(x))\wedge y=y$. On the other hand,
$r(x)\wedge r(y)=r(y)\odot (r(y)^-\oplus r(x))$ which yields that
\begin{eqnarray*}
y&=& r(y)\odot (r(x)\wedge r(y))=r(y)\odot r(y)\odot (r(y)^-\oplus r(x))=y\odot (r(y)^-\oplus r(x))\\
&\leq & r(x)\odot r(y)\odot (r(y)^-\oplus r(x))=r(x)\odot (r(x)\wedge r(y))\leq r(x)\odot r(x)=x.
\end{eqnarray*}

(13)  Due to \cite[Prop 4.3]{georgescu}, we have $r_a(x)=r(x)\wedge a=a\odot r(x)$, $x\in [0,a]$.
Conditions (Sq1) and (Sq2) are clear. Since the mapping $\phi:M \to [0,a]$ defined by $\phi_a(x)=x\wedge a$, $x\in M$, is a homomorphism of pseudo MV-algebras, (Sq3) holds.

In addition, if $r$ is standard, then $r_a(x)\odot r_a(0)=(r(x)\odot a)\odot (r(0)\odot a)=r(x)\odot r(0)\odot a=r(0)\odot r(x)\odot a=
r(0)\odot a\odot r(x)\odot a=r_a(0)\odot r_a(x)$, which means $r_a$ is standard, too.

(14) By (8), $r(x\oplus y)=r((x^\sim)^-\oplus y)=r(x^\sim \ra y)\ge r(x^\sim)\ra r(y)=(r(x)\rightsquigarrow r(0))\ra r(y)=
(r(0)\oplus r(x)^\sim)^-\oplus r(y)=(r(x)\odot r(0)^-)\oplus r(y)$.

The second part also follows from (8).

(15) Choose $m\in\mathbb N$. If $n=1$, then $(r^m(x))^{2^n}=(r^m(x))\odot (r^m(x))=r(r^{m-1}(x))\odot r(r^{m-1}(x))=r^{m-1}(x)$. Now, let
$1\leq n$, $n+1\leq m$ and $(r^m(x))^{2^n}=r^{m-n}(x)$. Then
$(r^m(x))^{2^{n+1}}=(r^m(x))^{2^{n}}\odot (r^m(x))^{2^{n}}=r^{m-n}(x)\odot r^{m-n}(x)=r^{m-n-1}(x)$.
\end{proof}

\begin{prop}\label{pr:equi}
Let $r$ be a weak square root on a pseudo MV-algebra. The following statements are equivalent:
\begin{itemize}
\item[{\rm (i)}] $r$ is a square root on $M$.
\item[{\rm (ii)}] $r(x)\odot r(x^-)\leq r(0)$.
\item[{\rm (iii)}] $r(x^\sim)\odot r(x)\leq r(0)$.
\item[{\rm (iv)}]  $r(x^-)= r(x)\to r(0)$.
\item[{\rm (v)}] $r(x^\sim) = r(x)\rightsquigarrow r(0)$.
\end{itemize}
\end{prop}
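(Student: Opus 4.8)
The plan is to exploit the fact that Proposition~\ref{3.2}(8), which the excerpt explicitly lists as valid for \emph{weak} square roots, already supplies one half of each identity in (Sq3): taking $y=0$ there gives $r(x)\ra r(0)\le r(x\ra 0)=r(x^-)$ and $r(x)\rightsquigarrow r(0)\le r(x\rightsquigarrow 0)=r(x^\sim)$. Hence each equality in (Sq3) is equivalent to its reverse inequality, and the reverse inequality is in turn equivalent, by the adjointness in Proposition~\ref{PMV-prop}(vi), to the corresponding $\odot$-bound. Concretely, $r(x)\odot r(x^-)\le r(0)$ iff $r(x^-)\le r(x)\ra r(0)$ by Proposition~\ref{PMV-prop}(vi), and combining this with the inequality above yields (ii)~$\Leftrightarrow$~(iv); the direction (iv)~$\Rightarrow$~(ii) can also be read off directly, since $r(x^-)=r(x)\ra r(0)$ forces $r(x)\odot r(x^-)=r(x)\odot(r(x)\ra r(0))=r(x)\wedge r(0)\le r(0)$ by Proposition~\ref{PMV-prop}(i). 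Running the same argument with ``$\rightsquigarrow$'' in place of ``$\ra$'' (using the second halves of Proposition~\ref{PMV-prop}(i),(vi) together with Proposition~\ref{3.2}(8)) gives (iii)~$\Leftrightarrow$~(v).

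Next I would establish (ii)~$\Leftrightarrow$~(iii) by a substitution, using that $^-$ and $^\sim$ are mutually inverse bijections of $M$: axiom (A8) gives $(x^-)^\sim=x$, and the companion identity $(x^\sim)^-=x$ is the standard dual (it is already used implicitly in the proof of Proposition~\ref{3.2}). Replacing $x$ by $x^\sim$ in (ii) turns $r(x)\odot r(x^-)\le r(0)$ into $r(x^\sim)\odot r\bigl((x^\sim)^-\bigr)=r(x^\sim)\odot r(x)\le r(0)$, which is (iii); replacing $x$ by $x^-$ in (iii) turns $r(x^\sim)\odot r(x)\le r(0)$ into $r\bigl((x^-)^\sim\bigr)\odot r(x^-)=r(x)\odot r(x^-)\le r(0)$, which is (ii).

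Finally I would close the cycle. The implication (i)~$\Rightarrow$~(iv) is immediate, since (iv) is literally one of the two clauses of (Sq3). Conversely, if (iv) holds, then by the equivalences just proved so do (ii), (iii) and hence (v); thus both clauses of (Sq3) hold, and as $r$ is a weak square root it also satisfies (Sq1) and (Sq2), so $r$ is a square root, i.e.\ (i) holds. Therefore (i)~$\Leftrightarrow$~(ii)~$\Leftrightarrow$~(iii)~$\Leftrightarrow$~(iv)~$\Leftrightarrow$~(v).

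I do not expect a genuine obstacle here: the argument is essentially a repackaging of Proposition~\ref{3.2}(8) via residuation, and the only care needed is bookkeeping — pairing $^-$ with $\ra$ and $^\sim$ with $\rightsquigarrow$ correctly in Proposition~\ref{PMV-prop}(vi), and checking that every auxiliary fact invoked (Proposition~\ref{3.2}(8), Proposition~\ref{PMV-prop}(i),(vi)) is available for weak square roots, which the excerpt confirms.
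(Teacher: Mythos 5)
Your proposal is correct and follows essentially the same route as the paper: both rest on Proposition~\ref{3.2}(8) with $y=0$ to get $r(x)\to r(0)\le r(x^-)$ and $r(x)\rightsquigarrow r(0)\le r(x^\sim)$, and then use the residuation law of Proposition~\ref{PMV-prop}(vi) to convert each $\odot$-bound into the reverse inequality, yielding (ii)$\Leftrightarrow$(iv) and (iii)$\Leftrightarrow$(v). The only difference is that you make explicit the bridge (ii)$\Leftrightarrow$(iii) via the substitutions $x\mapsto x^\sim$ and $x\mapsto x^-$, a step the paper compresses into ``analogously''; this is a welcome addition, since that bridge is genuinely needed to conclude that any single clause of (Sq3) implies both and hence implies (i).
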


\begin{proof}
By Proposition \ref{3.2}(8), we know that for a weak square root $r$, we always have $r(x)\to r(0)\leq r(x\to 0)=r(x^-)$ and $r(x)\rightsquigarrow r(0)\leq r(x\rightsquigarrow 0)=r(x^\sim)$.

Thus, let $r(x)\odot r(x^-)\leq r(0)$. It implies
 $r(x^-)\leq r(x)\to r(0)$.
Hence $r(x)\to r(0)=r(x^-)$.
Conversely, let $r(x)\to r(0)=r(x^-)$. Then $r(x^-)\leq r(x)\to r(0)$ which implies that $r(x)\odot r(x^-)\leq r(0)$.

Analogously, we can show that all statements (i)--(v) are equivalent.
\end{proof}

In the note just before Proposition \ref{3.2}, we have seen that if $r(x)\odot r(y)=r(y)\odot r(x)$, then $x\odot y=y\odot x$. The converse holds for $x,y\in M$ with
$r(0)\leq r(x)\odot r(y),r(y)\odot r(x)$.
The proof follows from part (10) of the latter proposition.

In the following proposition, we present some upper and lower bounds for $r(x)$, where $r$ is a square root on a pseudo MV-algebra $M$.

\begin{prop}\label{rmk-cor}
Let $r$ be a square root on a pseudo MV-algebra $(M;\oplus,^-,^\sim,0,1)$. Then:

\begin{itemize}
\item[{\rm (i)}]
\begin{eqnarray}\label{eq1-rmk-cor}
x\leq x\vee r(0)&=&r(x\odot x)\leq r(x),\quad \forall x\in M.\\
\label{eq1-rmk-cor3}
r(x^-)^\sim \vee r(x^\sim)^-&\leq& r(x^-)^\sim \oplus r(x^-)^\sim=x\leq x\vee r(0)=r(x\odot x)\leq r(x),\quad \forall x\in M.\\
\label{eq-rmk-cor'}
r(x^-)^\sim \oplus r(x^-)^\sim &=& x = r(x^\sim)^- \oplus r(x^\sim)^-,\quad \forall x\in M.
\end{eqnarray}
Moreover, $x\in\B(M)$ \iff $r(x)=x\vee r(0)$.

\item[{\rm (ii)}]
\begin{eqnarray}\label{eq:bound}
r(x)&\leq& (x\oplus r(0))\wedge (r(0)\oplus x),\quad \forall x\in M.
\end{eqnarray}
\item[{\rm (iii)}] For each $x\in M$, there exists a sequence
$\{x_n\}_{n\in \mathbb N}$ of elements of $M$ such that
$x\leq x_1\leq \cdots\leq x_n\le\cdots $ and
$x_n^{2n}=x$ for each $n\ge 1$.

\item[{\rm (iv)}]
\begin{equation}\label{eq:new}
r(x)\leq (r(0)\ra x)\wedge (r(0)\rightsquigarrow x),\quad \forall x\in M.
\end{equation}

\item[{\rm (v)}] \begin{eqnarray}\label{eq1-rmk-cor4}
r(x^\sim)^-\oplus r(0)=r(x)=r(0)\oplus r(x^-)^\sim,\quad \forall x\in M.
\end{eqnarray}
\item[{\rm (vi)}] \begin{eqnarray}\label{eq1-rmk-cor5}
x\oplus r(x)&=&r(x)\oplus x ,\quad \forall x\in M.\\
\label{eq1-rmk-cor6}
x\oplus r(0)&=&r(0)\oplus x,\quad \forall x\in M.
\end{eqnarray}
\item[{\rm (vii)}]
$r(0)=\max\{x\wedge x^-\colon x\in M\}=\max\{x\wedge x^\sim\colon x\in M\}$, $\forall x\in M$, and\\
$r(0)^-=\min\{x\vee x^-\colon x\in M\}=\min\{x\vee x^\sim\colon x\in M\}=r(0)^\sim$, $\forall x\in M$.

\item[{\rm (viii)}] $r(r(0)\ra 0)\leq (r(0)\ra 0)\vee r(r(0))$, $\forall x\in M$.
\end{itemize}
\end{prop}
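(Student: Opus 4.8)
Almost every clause is a formal consequence of Proposition~\ref{3.2}, Proposition~\ref{PMV-prop}, (Sq3)/Proposition~\ref{pr:equi}, and the standard pseudo MV-algebra identities (the two pairs of De~Morgan laws, axiom (A6), the adjunction Proposition~\ref{PMV-prop}(vi), the laws $(y^\sim)^-=y=(y^-)^\sim$, and $a^-\oplus a=a\oplus a^\sim=1$). I would dispatch these first. The chain $\eqref{eq1-rmk-cor}$ is Proposition~\ref{3.2}(10) (which gives $r(x\odot x)=(r(x)\odot r(x))\vee r(0)=x\vee r(0)$) together with Proposition~\ref{3.2}(1). For $\eqref{eq-rmk-cor'}$ apply $^\sim$ to (Sq1) for $x^-$: $x=(x^-)^\sim=(r(x^-)\odot r(x^-))^\sim=r(x^-)^\sim\oplus r(x^-)^\sim$, and the mirror identity with $^-$ and $x^\sim$; then $\eqref{eq1-rmk-cor3}$ follows since each summand there is $\le x$ while $x\le x\vee r(0)=r(x\odot x)\le r(x)$. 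The equivalence ``$x\in\B(M)$ iff $r(x)=x\vee r(0)$'' is immediate from $r(x\odot x)=x\vee r(0)$ and the injectivity of $r$ (noted after Definition~\ref{3.1}). For $\eqref{eq1-rmk-cor4}$, (Sq3) gives $r(x^\sim)^-=(r(0)\oplus r(x)^\sim)^-=r(x)\odot r(0)^-$, so $r(x^\sim)^-\oplus r(0)=(r(x)\odot r(0)^-)\oplus r(0)=r(x)\vee r(0)=r(x)$ by (A6) and $r(0)\le r(x)$, and dually for $r(0)\oplus r(x^-)^\sim$. For (iii) take $x_n=r^n(x)$ and apply Proposition~\ref{3.2}(1) and~\ref{3.2}(15) (with $m=n$). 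And $\eqref{eq1-rmk-cor6}$ is literally identity $\eqref{y+y}$ of Proposition~\ref{3.2}(11).

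\textbf{The bounds $\eqref{eq:bound}$, $\eqref{eq:new}$ and items (vii), (viii).} From (Sq3) and Proposition~\ref{3.2}(1), $x^-\le r(x^-)=r(x)^-\oplus r(0)$; applying $^\sim$ and De~Morgan yields $r(0)^\sim\odot r(x)\le x$, and symmetrically $r(x)\odot r(0)^-\le x$. Proposition~\ref{PMV-prop}(vi) then turns these into $r(x)\le r(0)^\sim\ra x=r(0)\oplus x$ and $r(x)\le r(0)^-\rightsquigarrow x=x\oplus r(0)$, which is $\eqref{eq:bound}$; the weaker inequalities $r(0)\odot r(x)\le x$, $r(x)\odot r(0)\le x$ of Proposition~\ref{3.2}(1) give $\eqref{eq:new}$ the same way. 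For (vii): $x\wedge x^-,\ x\wedge x^\sim\le r(0)$ for every $x$ by Proposition~\ref{3.2}(5), and since $r(0)\odot r(0)=0$ (Sq1) forces $r(0)\le r(0)^-$ and $r(0)\le r(0)^\sim$, the value $r(0)$ is attained at $x=r(0)$; dualizing by $^-$ and $^\sim$ gives the two ``$\min$'' formulas, attained at $r(0)^-$ resp.\ $r(0)^\sim$ (using $(r(0)^-)^-\le r(0)^-$ and $(r(0)^\sim)^\sim\le r(0)^\sim$, obtained by negating $r(0)\le r(0)^-$, $r(0)\le r(0)^\sim$). The equality $r(0)^-=r(0)^\sim$ then follows from $\eqref{eq1-rmk-cor6}$: $r(0)\oplus r(0)^-=r(0)^-\oplus r(0)=1$, and $a\oplus b=1\Leftrightarrow b\ge a^\sim$ gives $r(0)^-\ge r(0)^\sim$, while $r(0)\oplus r(0)^\sim=r(0)^\sim\oplus r(0)=1$ and $a\oplus b=1\Leftrightarrow a\ge b^-$ give the reverse. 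For (viii): by (Sq3), $r(r(0)\ra 0)=r(r(0)^-)=r(r(0))^-\oplus r(0)$, and since $r(r(0))\odot r(r(0))=r(0)$, Proposition~\ref{PMV-prop}(iv) (the identity $z\ra(z\odot z)=z^-\vee z$) gives $r(r(0))^-\oplus r(0)=r(r(0))^-\vee r(r(0))\le r(0)^-\vee r(r(0))$ because $r(0)\le r(r(0))$.

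\textbf{The main obstacle: $\eqref{eq1-rmk-cor5}$.} The one clause that is not a routine rewriting is $x\oplus r(x)=r(x)\oplus x$. Using $\eqref{eq1-rmk-cor4}$ to write $r(x)=r(0)\oplus r(x^-)^\sim=r(x^\sim)^-\oplus r(0)$ and $\eqref{eq1-rmk-cor6}$ to slide $r(0)$ past $x$, one reduces $\eqref{eq1-rmk-cor5}$ to the assertion that $x$ commutes under $\oplus$ with $r(x^\sim)^-=r(x)\odot r(0)^-$; equivalently, by De~Morgan together with $x\odot r(x)=r(x)\odot x$ (Proposition~\ref{3.2}(1)), that $r(x)^\sim$ commutes under $\odot$ with $x^\sim=r(x)^\sim\oplus r(x)^\sim$. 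I expect this to be the genuine difficulty: it is not an equational consequence of the pseudo MV-axioms (the analogue with an arbitrary element in place of $r(x)$ already fails in a non-commutative $\ell$-group), so the square-root structure must be exploited. The route I would take is to pass to $\mathbf{\Psi}(M)$ via Theorem~\ref{functor}: there $\eqref{eq1-rmk-cor6}$ says $r(0)$ commutes (in the group) with every $y\le r(0)^-$, and $r(0)^-$ is a strong unit because $r(0)\odot r(0)=0$ forces $1\le r(0)^-\oplus r(0)^-$, so $r(0)$ lies in the centre of the $\ell$-group; one then checks, using $\eqref{eq1-rmk-cor4}$ and $\eqref{eq-rmk-cor'}$ once more, that the commutator of $r(x)$ with the unit vanishes, which is exactly what $\eqref{eq1-rmk-cor5}$ amounts to. The care here is entirely in the bookkeeping between the truncated sum $\oplus$ and the group addition.
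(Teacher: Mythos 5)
Most of your clauses are handled correctly and along the same lines as the paper: (i), (iii), (iv), (v), \eqref{eq1-rmk-cor6}, (vii) and (viii) all match the intended arguments, and your derivation of \eqref{eq:bound} is a genuine (valid) alternative --- you obtain $r(0)^\sim\odot r(x)\le x$ and $r(x)\odot r(0)^-\le x$ from (Sq3) applied to $x^-\le r(x^-)$ and $x^\sim\le r(x^\sim)$ and then use the adjunction, whereas the paper argues directly from (Sq1)--(Sq2) that $z\odot z\le x$ forces $x^\sim\odot z,\,z\odot x^-\le r(0)$ and invokes (A6); the paper's route has the small advantage of working for weak square roots as well, but for the stated proposition both are fine.

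The genuine gap is \eqref{eq1-rmk-cor5}. Your reduction is sound: writing $r(x)=r(x^\sim)^-\oplus r(0)$ and sliding $r(0)$ past $x$ via \eqref{eq1-rmk-cor6} reduces the claim to $x\oplus r(x^\sim)^-=r(x^\sim)^-\oplus x$. But at that point the proposed detour through $\mathbf{\Psi}(M)$ is not an argument: the assertion that ``the commutator of $r(x)$ with the unit vanishes, which is exactly what \eqref{eq1-rmc-cor5} amounts to'' is left entirely unverified (note that $x\oplus r(x)=r(x)\oplus x$ is a statement about truncated sums, not about a commutator with $u$), and even the preliminary step that $r(0)\in\C(G)$ silently uses the Riesz decomposition of $G^+$ over the strong unit $r(0)^-$. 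None of this is needed. Your own reduction already closes by associativity alone: by \eqref{eq-rmk-cor'}, $x=r(x^\sim)^-\oplus r(x^\sim)^-$, so $x\oplus r(x^\sim)^-=r(x^\sim)^-\oplus r(x^\sim)^-\oplus r(x^\sim)^-=r(x^\sim)^-\oplus x$ by (A1), and the claim follows. This is in substance what the paper does: it writes $x\oplus r(x)=r(x^\sim)^-\oplus r(x^\sim)^-\oplus r(0)\oplus r(x^-)^\sim$ using \eqref{eq-rmk-cor'} and \eqref{eq1-rmk-cor4}, regroups by associativity, and replaces $r(x^\sim)^-\oplus r(0)$ by $r(0)\oplus r(x^-)^\sim$ in the middle to arrive at $r(x)\oplus x$. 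So \eqref{eq1-rmk-cor5} \emph{is} an equational consequence of (Sq1)--(Sq3) and the pseudo MV-axioms, contrary to your expectation; the square-root structure enters only through the decompositions $x=r(x^\sim)^-\oplus r(x^\sim)^-=r(x^-)^\sim\oplus r(x^-)^\sim$ and the two expressions for $r(x)$ in \eqref{eq1-rmk-cor4}.
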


\begin{proof}
{\rm (i)} In
{\rm Proposition \ref{3.2} (1)} and {\rm (4)}, it was proved that $x\leq x\vee r(0)\leq r(x)$ and $x\leq r(x\odot x)$.
Also, by part (10) from this proposition, $r(x\odot x)= (r(x)\odot r(x))\vee r(0)=x\vee r(0)$, so we have equation \eqref{eq1-rmk-cor}.
Hence, if $x\in M\setminus \B(M)$, then $x\odot x\neq x$ thus $r(0)\vee x<r(x)$. Otherwise, if $r(0)\vee x=r(x)$, then $r(x)=r(x\odot x)=x\vee r(0)$, that is $x\odot x=x$.
Moreover, $x\in \B(M)$ \iff $r(x)=r(0)\vee x$ (since $r$ is a one-to-one map).

By \eqref{eq1-rmk-cor}, replacing $x$ with $x^-$ and $x^\sim$, we have
$x^-\leq r(x^-)$ and $x^\sim\leq r(x^\sim)$ which imply that $r(x^-)^\sim \vee r(x^\sim)^-\leq x$.
Also, $r(x^-)^\sim\oplus r(x^-)^\sim=(r(x^-)\odot r(x^-))^\sim=x^{-\sim}=x$. In a similar way,
$x=r(x^\sim)^-\oplus r(x^\sim)^-$. Therefore, we have
\eqref{eq1-rmk-cor3}.

{\rm (ii)} According to the definition of a square root, $r(x)=\max\{z\in M\mid z\odot z\leq x\}$.
We claim that $x\oplus r(0)$ is an upper bound for $r(x)$.
Indeed, set $S:=\{z\in M\mid z\odot z\leq 0\}$. If $z\in M$ such that $z\odot z\leq x$, then $(z\odot z)\odot x^-=0$ and $x^\sim \odot (z\odot z)=0$ which means
$(z\odot x^-)\odot (z\odot x^-)\leq z\odot (z\odot x^-)=0$ and
$(x^\sim \odot z)\odot (x^\sim \odot z)\leq (x^\sim \odot z)\odot z=0$, thus
$x^\sim \odot z,z\odot x^-\in S$. It follows that $x^\sim \odot z,z\odot x^-\leq r(0)$ and hence,
\begin{eqnarray*}
z&\leq& x\vee z=x\oplus (x^\sim \odot z)\leq x\oplus r(0),\\
z &\leq& x\vee z=(z\odot x^-)\oplus x\leq r(0)\oplus x.
\end{eqnarray*}
Since $r(x)$ is the greatest element of $S$, we conclude that
\eqref{eq:bound} holds.

{\rm (iii)} Let $x\in M$ be given. It suffices to set $x_n:=r^n(x)$. Then by Proposition \ref{3.2}(15),
we have $x_n^{2n}=(r^n(x))^{2n}=x$ and $x_n=r^n(x)\leq r^{n+1}(x)=x_{n+1}$.

{\rm (iv)} We have $r(x)\odot r(0),r(0)\odot r(x)\leq r(x)\odot r(x)=x$ (by Proposition \ref{3.2}(2)), consequently,
by Proposition \ref{PMV-prop}(vi), $r(x)\leq r(0)\ra x$ and $r(x)\leq r(0)\rightsquigarrow x$.
That is, $r(x)\leq (r(0)\ra x)\wedge (r(0)\rightsquigarrow x)$.

{\rm (v)} Due to
\eqref{eq1-rmk-cor3} and \eqref{eq:bound}, $r(x^-)^\sim \leq x$ and $r(x)\leq r(0)\oplus x$. Then
\begin{eqnarray*}
r(0)\oplus r(x^-)^\sim &=& r(0)\oplus (r(x)\ra r(0))^\sim \quad \mbox{ by (Sq3)}\\
&=& r(0)\oplus (r(x)^-\oplus r(0))^\sim\\
&=&r(0)\oplus (r(0)^\sim\odot r(x))\\
&=& r(0)\vee r(x)=r(x) \quad \mbox{ by Proposition \ref{3.2}(2)}.
\end{eqnarray*}
In a similar way, $r(x)=r(x^\sim)^-\oplus r(0)$. Hence,
\eqref{eq1-rmk-cor4} is established.

(vi) (a) Let $x\in M$. By \eqref{eq-rmk-cor'}, we know that $r(x^-)^\sim\oplus r(x^-)^\sim=x=r(x^\sim)^-\oplus r(x^\sim)^-$.
On the other hand, by (\ref{eq1-rmk-cor4}), $r(x^\sim)^-\oplus r(0)=r(x)=r(0)\oplus r(x^-)^\sim$. It follows that
\begin{eqnarray*}
x\oplus r(x)&=& r(x^\sim)^-\oplus r(x^\sim)^-\oplus r(0)\oplus r(x^-)^\sim
= r(x^\sim)^-\oplus \left(r(x^\sim)^-\oplus r(0)\right)\oplus r(x^-)^\sim \\
&=& r(x^\sim)^-\oplus \left(r(0)\oplus r(x^-)^\sim\right)\oplus r(x^-)^\sim
= \left(r(x^\sim)^-\oplus r(0)\right)\oplus \left(r(x^-)^\sim\oplus r(x^-)^\sim\right) \\
&=& r(x)\oplus x.
\end{eqnarray*}
(b) For each $x\in M$, part (i) and equation (\ref{eq1-rmk-cor4}) imply that
\begin{eqnarray*}
x\oplus r(0)&=& r(x^\sim)^-\oplus r(x^\sim)^-\oplus r(0)=r(x^\sim)^-\oplus r(0)\oplus r(x^-)^\sim\\
&=& r(0)\oplus r(x^-)^\sim\oplus r(x^-)^\sim=r(0)\oplus x.
\end{eqnarray*}
Using \eqref{y+y}, we have (b).

(vii) By Proposition \ref{3.2}(5), for each $x\in M$, $x\wedge x^-\leq r(0)$. So, $r(0)$ is an upper bound for the set $\{x\wedge x^-\colon x\in M\}$ in $M$. Since $r(0)\odot r(0)=0$, we have $r(0)=r(0)\wedge r(0)^-\in \{x\wedge x^-\colon x\in M\}$, and
$r(0)=\max\{x\wedge x^-\colon x\in M\}$. In a similar way, we can prove the second part of the first equation.

The second equation follows from the negation of the first equation.

(viii) By Proposition \ref{3.2}(8) and Proposition \ref{PMV-prop}(iv), we have
\begin{eqnarray*}
(r(0)\ra 0)\vee r(r(0))&\geq& (r(r(0))\ra 0)\vee  r(r(0)) \quad \mbox{ since $r(0)\leq r(r(0))$}\\
&=& r(r(0))\ra \big(r(r(0))\odot r(r(0)) \big)\quad \mbox{ by Proposition \ref{PMV-prop}(iv)}\\
&=& r(r(0))\ra r(0)=r(r(0)\ra 0)\quad \mbox{ by (8)}.
\end{eqnarray*}

\end{proof}

In what follows, we need the following notions to present square roots on non-commutative examples of pseudo MV-algebras.

We say that an $\ell$-group $G$ is (i) {\it divisible} if for every integer $n\ge 1$ and $g\in G$, there is $h\in G$ such that $nh=g$, (ii) {\it two-divisible} if for every $g\in G$, there is $h\in H$ such that $h+h=g$. For example, the group of dyadic numbers $\mathbb D=\{m/2^n\mid m\in \mathbb Z,\, n\ge 1\}$ is two-divisible but not divisible. In the same way, we introduce a divisible pseudo MV-algebra and a two-divisible pseudo MV-algebra when we use the partial addition $+$ deduced from $\oplus$. We note that if $(G,u)$ is a unital $\ell$-group and $G$ is divisible or two-divisible, then so is $M=\Gamma(G,u)$.

An $\ell$-group $G$ enjoys {\it unique extraction of roots} if for every integer $n\ge 1$ and  $g,h\in G$, $ng=nh$ implies $g=h$. In the same way, we say that a pseudo MV-algebra enjoys unique extraction of roots. We note that every linearly ordered group, \cite[Lem 2.1.4]{Gla}, (linearly ordered pseudo MV-algebra) enjoys unique extraction of roots. The same is true for each representable $\ell$-group. Therefore, if $G$ or $M$ is two-divisible and it enjoys unique extraction of roots, then for each $x\in G$ $(x\in M$), there is a unique $y\in G$ ($y\in M$) such that $2y=x$, and we denote the unique element $y$ by $y:=x/2$.

Whence, if $G$ is a linearly ordered two-divisible group, then
(i) $x\le y$ implies $x/2\le y/2$ (otherwise $y/2<x/2$ giving a contradiction $y<x$), (ii) $(x\vee y)/2 = x/2\vee y/2$, and (iii) $(x\wedge y)/2 = (x/2)\wedge (y/2)$. Analogously, if a linearly ordered group $G$ is divisible, then
(i) $x\le y$ implies $x/n \le y/n$ for each integer $n\ge 1$, (ii) $(x\vee y)/n = (x/n)\vee (y/n)$, and (iii) $(x\wedge y)/n = (x/n) \wedge (y/n)$. The same holds if $G$ is a divisible representable $\ell$-group.

We note that according to Mal'cev \cite[Thm 7.3.2]{KoMe}, every locally nilpotent linearly ordered group can be embedded into a divisible locally nilpotent linearly ordered group, and every nilpotent linearly ordered group can be embedded into a divisible
linearly ordered group.

\begin{exm}\label{ex:sym}
Let $(H,1)$ be a subgroup of $(\mathbb R,1)$ that is two-divisible (e.g., $H=\mathbb Q$, rational numbers in $\mathbb R$, and $H=\mathbb D$, dyadic numbers in $\mathbb R$), and let $G$ be a (two)-divisible nilpotent linearly ordered group.

We note there are countably many mutually different subalgebras $(H,1)$ of $(\mathbb R,1)$ that are two-divisible. Indeed, let $p\ge 1$ be an integer. Define $H(p)=\{i/p^n\mid i\in \mathbb Z,\, n\ge 1\}$. Then $1\in H(p)$. If $p\ge 3$ is an even number, then $(H(p),1)$ is a two-divisible unital subgroup of $(\mathbb R,1)$. Let $p_0=2<p_1=3<\cdots<p_n$ be the first $n+1$ prime numbers and let $P_n=p_0p_1\cdots p_n$. Then $H(P_n)$ is two-divisible and for each $n\ge 1$, $1/p_n\in H(P_n)\setminus H(P_{n-1})$, and $1/P^k_n= p^k_{n+1}/P^k_{n+1}\in H(P_{n+1})$ for each $k$, so that $H(P_n)\subsetneqq H(P_{n+1})$ for each $n\ge 0$, and $\{(H(P_n),1)\}_{n=1}^\infty$ is a sequence of mutually different two-divisible unital subgroups of $(\mathbb R,1)$.

We note e.g. that $\mathbb D=H(2)=H(2^n)$ for each $n\ge 1$, and $\mathbb D\subseteq H$ for each two-divisible unital subgroup $(H,1)$ of $(\mathbb R,1)$.

(1) Define $M=\Gamma(H\lex G,(1,0))$. Then $M$ is a symmetric linearly ordered pseudo MV-algebra that is two-divisible, and it enjoys unique extraction of roots, but $M$ is not an MV-algebra. Moreover, if $u=(1,0)$, then $u/2=(1/2,0)\in \C(H\lex G)$.

(2) Define $M=\Gamma(H\lex G,(1,g_0))$, where $g_0\notin \C(G)$. Then $M$ is a linearly ordered pseudo MV-algebra that is two-divisible, and it enjoys unique extraction of roots, but $M$ is not symmetric.
\end{exm}

\begin{exm}\label{3.4}
(i) If $M$ is a Boolean algebra, then the identity map $\id_M:M\ra M$ is a square root.
Indeed, for each $x\in M$ we have $\id_M(x)\odot \id_M(x)=x\odot x=x$. Similarly, (Sq2) holds, and $\id_M$ satisfies (Sq3). Also, $\id_M$ is the only square root on $M$.

(ii) If in a pseudo MV-algebra  $(M;\oplus,^-,^\sim,0,1)$, the binary operation $\oplus$ is commutative, then $M$ is an MV-algebra and the concept of a square root on $M$ in Definition \ref{3.1} coincides with the concept of a square root on $M$ defined in \cite{Hol}.

(iii) Take a linearly ordered (or representable) two-divisible unital group $(G,u)$, where $u/2\in \C(G)$. According to Example \ref{ex:sym}(i), such groups exist. If we set $M=\Gamma(G,u)$, $M$ is
a symmetric two-divisible linearly ordered pseudo MV-algebra $M$.
First, we note that given $x\in G$, $x+u= x/2+x/2+u/2+u/2= x/2+u/2 +x/2+u/2
$ and $(x+u)/2=x/2+y/2$.
We claim $s:M\to M$ defined by
$$
s(x)=\frac{x+u}{2}, \quad x \in M,
$$
where $+$ is the group addition in the group $G$, is a square root on $M$. Since $0\le s(x)\le u$, $s(x)\in M$. Moreover, $s(x)=(x/2)+(u/2)$. (Sq1): It is clear that $s(x)\odot s(x) = (2((x+u)/2) -u)\vee 0=x$.  (Sq2): Let $y\odot y \le x$. Then $(y-u+y)\vee 0 = (2y-u)\vee 0\le x$ which gives $2y\le x+u= 2((x+u)/2)$, and $y\le (x+u)/2$. (Sq3): Let $x \in M$. Then $s(0)=u/2$ and $s(x^-)=(x^-+u)/2=((u-x)+u)/2=u-x/2$ and
$s(x)\ra s(0)=s(x)^-\oplus s(0)=(u-(x+u)/2+u/2)\wedge u=(u-x/2)\wedge u=u-x/2$. Moreover, $s$ is strict: $s(x)\odot s(0)=x/2=s(0)\odot s(x)$.

The example also works if $(G,u)$ is a two-divisible unital $\ell$-group which enjoys unique extraction of roots, $u/2\in \C(G)$, and $2y\le x$ entails $y\le x/2$.

(iv) Let $M$ be a direct product of symmetric two-divisible pseudo MV-algebras $M_i$ with square root for each $i\in I$. Then $M$ has a square root, namely if $x=(x_i)_i\in M=\prod_i M_i$, then $s(x)=(s_i(x_i))_i$, where $s_i$ is a unique square root on $M_i$, see part (iii).

(v) Let $M=\Gamma(G,u)$, where $(G,u)$ is an arbitrary linearly ordered (or representable) two-divisible unital $\ell$-group. Then
\begin{equation}\label{eq:form2}
r(x)=\frac{x-u}{2} +u,\quad x\in M,
\end{equation}
is a weak square root on $M$. Indeed, let $y\odot y\le x$. Then $(y-u+y)\vee 0\le x$ and $y-u+y\le x$. But $y-u+y=(y-u)+(y-u)+u\le x$, which gives $y\le (x-u)/2+u=r(x)$. It is clear that $r(x)\odot r(x)=x$. Moreover, $r(0)=u/2= (r(0))^-=(r(0))^\sim$. This $r$ is not necessarily standard, see Example \ref{example} and Example \ref{example1}.

In addition, if the strong unit $u$ is such that for each $g\in G$, $(g/2)+(u/2)=(g+u)/2$ (for example if $u/2 \in \C(G)$), then $r(x)=(x+u)/2$, $x\in M$, and $r$ is a square root: Check $r(x^-)=r(u-x)= (u-x-u)/2 +u = (u-x-u)/2 + u/2 +u/2= (u-x+u)/2$. On the other hand,
\begin{eqnarray*}
r(x)\to r(0)&=&(r(x))^-\oplus u/2= \big(u-((x-u)/2+u)\big) \oplus u/2\\
&=& ((u-x+u)/2)\wedge u= (u-x+u)/2=r(x^-),
\end{eqnarray*}
which by Proposition \ref{pr:equi} means $r$ is a square root on $M$.

(vi) Examples of weak square roots on non-symmetric pseudo MV-algebras that are not square roots will be done in Example \ref{example} and Example \ref{example1}.

(vii) If $G$ is an arbitrary $\ell$-group, then $M=\Gamma(\mathbb Z\lex G,(1,0))$ has no weak square roots.
\end{exm}

\begin{thm}\label{3.5}
Let $r$ be a square root on a pseudo MV-algebra $(M;\vee,\wedge,\oplus,0)$. Then $M$ is a Boolean algebras \iff $r(0)=0$.
\end{thm}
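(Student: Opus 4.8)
The plan is to prove the two implications separately, with the forward direction being essentially immediate and the reverse direction carrying the actual content.

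For the forward direction, suppose $M$ is a Boolean algebra. Then every element satisfies $x\oplus x=x$ and $x^-=x^\sim=\neg x$, so the defining identity $x\odot y=(y^-\oplus x^-)^\sim$ together with $\oplus=\vee$ collapses to $x\odot y=x\wedge y$. Hence (Sq1) at $0$ yields $r(0)=r(0)\wedge r(0)=r(0)\odot r(0)=0$. (Alternatively, by Example~\ref{3.4}(i) the identity map is the only square root on a Boolean algebra, so $r=\id_M$ and $r(0)=0$.)

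For the reverse direction, assume $r(0)=0$; the goal is to show that every $x\in M$ is Boolean, i.e.\ $x\oplus x=x$, which gives $M=\B(M)$ and hence $M$ is a Boolean algebra. The key input is identity \eqref{y+y} established inside the proof of Proposition~\ref{3.2}, namely $r(x\oplus x)=x\oplus r(0)$, valid for every square root; with $r(0)=0$ it reads $r(x\oplus x)=x$. Now apply (Sq1) to the element $x\oplus x$:
\[
x\oplus x=r(x\oplus x)\odot r(x\oplus x)=x\odot x.
\]
Finally, $x\odot x\le x\le x\oplus x$ holds in every pseudo MV-algebra (from $x^-\le x^-\oplus x^-$ and the fact that ${}^\sim$ reverses order one gets $x\odot x=(x^-\oplus x^-)^\sim\le (x^-)^\sim=x$, while monotonicity of $\oplus$ and $0\le x$ give $x\le x\oplus x$), so the equality $x\odot x=x\oplus x$ forces $x\odot x=x=x\oplus x$. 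Therefore $x\in\B(M)$ for every $x\in M$.

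There is no serious obstacle here; the only thing to be careful about is selecting the right earlier result to combine — the identity \eqref{y+y} is exactly what reduces $r(x\oplus x)$ to $x$ once $r(0)=0$, after which (Sq1) and the trivial sandwich $x\odot x\le x\le x\oplus x$ conclude. One could equally argue via Proposition~\ref{3.2}(11)(a) (which with $r(0)=0$ says $x\in\B(M)$ iff $r(x)=x$) together with Proposition~\ref{rmk-cor}(vii) ($r(0)=\max\{x\wedge x^-\colon x\in M\}$, so $r(0)=0$ forces $x\wedge x^-=0$ and dually $x\vee x^-=1$ for all $x$); the route through \eqref{y+y} is the shortest.
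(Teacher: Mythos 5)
Your proof is correct, and both directions are sound. The forward direction is identical to the paper's (in a Boolean algebra $\odot=\wedge$, so (Sq1) at $0$ gives $r(0)=r(0)\wedge r(0)=0$). For the reverse direction you take a slightly different route from the paper: you invoke the identity \eqref{y+y}, $r(x\oplus x)=x\oplus r(0)$, so that $r(0)=0$ yields $r(x\oplus x)=x$, then apply (Sq1) to get $x\oplus x=x\odot x$ and squeeze with $x\odot x\le x\le x\oplus x$. The paper instead sandwiches $r(x)$ directly, using Proposition~\ref{rmk-cor}(i)--(ii): $x\le r(x\odot x)=x\vee r(0)\le r(x)\le r(0)\oplus x=x$, which forces $r(x\odot x)=r(x)$, and then concludes $x\odot x=x$ from the injectivity of $r$. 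The two arguments draw on essentially the same earlier toolbox and are of comparable length; yours has the small advantage of not needing injectivity of $r$ explicitly, while the paper's avoids citing an equation that is only displayed inside the proof of Proposition~\ref{3.2}(11) (though \eqref{y+y} is indeed referenced elsewhere in the paper, so this is a legitimate citation). Your two suggested alternatives (via Proposition~\ref{3.2}(11)(a) with the bound $x\le r(x)\le x\oplus r(0)$, or via Proposition~\ref{rmk-cor}(vii) and complementedness) also go through.
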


\begin{proof}
Let $r(0)=0$ and $x\in M$. By Proposition \ref{rmk-cor}\eqref{eq1-rmk-cor3}, we have $x\leq r(x\odot x)=r(0)\vee x\leq r(x)\leq r(0)\oplus x=x$ which implies that $r(x)=r(x\odot x)$ and so
$x=x\odot x$ (since $r$ is one-to-one). That is, $M=\B(M)$ is a Boolean algebra.

Conversely, let $M$ be a Boolean algebra. Then $r(0)\odot r(0)=r(0)$ and $r(0)\odot r(0)=0$. Therefore, $r(0)=0$.
\end{proof}

\begin{prop}\label{Homo}
Let $f:M_1\to M_2$ be a homomorphism of pseudo MV-algebras and $r$ be a square root on $M_1$. Then
$\tau:\im(f)\to \im(f)$, defined by $\tau(f(x))=f(r(x))$ for all $x\in M_1$, is a square root on $\im(f)$.
\end{prop}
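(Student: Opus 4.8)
\emph{Plan.} I would check, in this order, that (1) $\tau$ is well defined as a map $\im(f)\to\im(f)$, and then that it satisfies (Sq1), (Sq2), (Sq3). The entire weight of the argument lies in (1); steps (2)--(4) are bookkeeping once (1) is known.

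\emph{Well-definedness.} Assume $f(x)=f(y)$; I must show $f(r(x))=f(r(y))$. Since $r(x\wedge y)=r(x)\wedge r(y)$ and $r(x\vee y)=r(x)\vee r(y)$ by Proposition \ref{3.2}(7),(9), and since $f(x\wedge y)=f(x)=f(x\vee y)$ ($f$ preserving the lattice operations), it suffices to prove the following: if $u\le v$ in $M_1$ and $v\odot u^-\in\Ker f$, then $f(r(u))=f(r(v))$. (Apply this with $u=x\wedge y$, $v=x\vee y$, noting $(x\vee y)\odot(x\wedge y)^-\in\Ker f$ because $f(x\vee y)=f(x\wedge y)$; then $f(r(x))\wedge f(r(y))=f(r(u))=f(r(v))=f(r(x))\vee f(r(y))$, and $a\wedge b=a\vee b$ forces $a=b$.) For such $u\le v$ we already have $f(r(u))\le f(r(v))$ from monotonicity of $r$ (Proposition \ref{3.2}(2)), so it is enough to show $r(v)\odot r(u)^-\in\Ker f$: this element equals $f(r(v))\odot f(r(u))^-$ in $M_2$, and its vanishing gives $f(r(v))\le f(r(u))$. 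Put $w:=v\odot u^-$; by (A6), $v=v\vee u=w\oplus u$. The key inequality I would establish is
\[
r(v)\ \le\ w\oplus r(u).
\]
Granting it, and using the general identity $(a\oplus b)\odot b^-\le a$ (immediate from Proposition \ref{PMV-prop}(vi) and (A8), as $b^-\rightsquigarrow a=a\oplus b$), we get $r(v)\odot r(u)^-\le (w\oplus r(u))\odot r(u)^-\le w=v\odot u^-\in\Ker f$, and hence $r(v)\odot r(u)^-\in\Ker f$ because $\Ker f$ is a downward closed ideal.

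\emph{Proof of the key inequality.} Set $t:=w\oplus r(u)$, so that $t\ge r(u)\ge r(0)$; then $r(t\odot t)=t$ by Proposition \ref{3.2}(10). Hence, again by monotonicity of $r$, it suffices to verify $t\odot t\ge v$, i.e.
\[
(w\oplus r(u))\odot(w\oplus r(u))\ \ge\ w\oplus\bigl(r(u)\odot r(u)\bigr)\ =\ w\oplus u\ =\ v.
\]
This is the technical heart of the proof: a purely pseudo MV-algebraic inequality, to be extracted from $v=w\oplus u$, $u=r(u)\odot r(u)$, $r(u)\ge r(0)$, and the centrality of $r(0)$ (which follows from $r(0)^-=r(0)^\sim$, Proposition \ref{rmk-cor}(vii)); after applying the De Morgan laws it reduces to $(p\odot q)\oplus(p\odot q)\le(p\oplus p)\odot q$ with $p=r(u)^-$ and $q=w^-$, where one uses that $p\oplus p$ is a non-truncated sum. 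I would treat the Boolean case $M_1=\B(M_1)$ first, where $r=\id_{M_1}$ and $\tau$ is trivially well defined; and I remark that if $r$ additionally satisfies $r(a)\odot r(b)\le r(a\odot b)$ for all $a,b$, the equality case of Proposition \ref{3.2}(14) delivers the inequality immediately. I expect this inequality to be the main obstacle; the rest is routine.

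\emph{The square-root axioms for $\tau$.} With well-definedness in hand, the remaining conditions follow by unwinding definitions and using that $f$ is a homomorphism. (Sq1): $\tau(f(x))\odot\tau(f(x))=f(r(x)\odot r(x))=f(x)$. (Sq3): $\tau(f(x)^-)=\tau(f(x^-))=f(r(x^-))=f(r(x)\to r(0))=f(r(x))\to f(r(0))=\tau(f(x))\to\tau(0)$, and symmetrically with $\rightsquigarrow$. (Sq2): if $b\odot b\le a$ in $\im(f)$, write $a=f(x)$, $b=f(y)$; then $f(y\odot y)\le f(x)$, so with $x':=(y\odot y)\vee x$ we have $f(x')=f(x)$ and $y\odot y\le x'$, hence $y\le r(x')$ by (Sq2) for $r$, and therefore $b=f(y)\le f(r(x'))=\tau(f(x'))=\tau(f(x))=\tau(a)$, the penultimate equality being exactly the well-definedness proved above.
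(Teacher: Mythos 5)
Your overall architecture is sound, and you have correctly isolated the one non\-trivial point, namely that $\tau$ is well defined; your reduction of this to the comparable case ($u\le v$ with $v\odot u^-\in\Ker f$, via Proposition \ref{3.2}(7),(9) and the observation that $a\wedge b=a\vee b$ forces $a=b$), and of that to the single inequality $r(v)\le (v\odot u^-)\oplus r(u)$ (via $(a\oplus b)\odot b^-\le a$ and Proposition \ref{3.2}(10)), all checks out, as do your verifications of (Sq1)--(Sq3) once well\-definedness is granted. The paper proceeds in the opposite order: it first proves (Sq1) and the maximality property (Sq2) for $f(r(x))$ relative to an \emph{arbitrary} representative $x$, and then obtains well\-definedness a posteriori from the uniqueness of weak square roots (if $f(x)=f(y)$, then $f(r(x))\odot f(r(x))=f(y)$, so (Sq2) gives $f(r(x))\le f(r(y))$ and symmetrically); its key computation is $r((y\odot y)\vee x)=(y\vee r(0))\vee r(x)$ from Proposition \ref{3.2}(9),(10), and no inequality of your kind appears.

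The genuine gap is that the inequality $(w\oplus r(u))\odot(w\oplus r(u))\ge w\oplus\bigl(r(u)\odot r(u)\bigr)$ --- which you yourself label the technical heart and the main obstacle --- is never proved, and the route you sketch does not close it. After De Morgan it becomes $(p\odot q)\oplus(p\odot q)\le(p\oplus p)\odot q$ with $p=r(u)^\sim$, $q=w^\sim$, and the hypothesis you lean on, that $p\oplus p$ is a non\-truncated sum (i.e.\ $p\odot p=0$, i.e.\ $r(u)^\sim\le r(u)$), holds when $r$ is strict (then $r(u)^\sim\le r(0)^\sim=r(0)\le r(u)$) but fails in general: for a Boolean $u<1$ one has $r(u)=u$ and $u^\sim\not\le u$. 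Without that hypothesis the reduced inequality is simply false (in $[0,1]$ take $p=0.9$, $q=0.5$: the left side is $0.8$, the right side is $0.5$), so your sketch at best covers the strict case; the mixed case of Theorem \ref{3.10}(iii) is not addressed (and invoking that decomposition here would need care, since it appears later in the paper), and even under the non\-truncation hypothesis the reduced inequality is not actually verified in the non\-commutative setting. Your fallback via the equality case of Proposition \ref{3.2}(14) assumes $r(a)\odot r(b)\le r(a\odot b)$, which is precisely the extra condition that Problem \ref{op:1} suggests is not automatic. Until the displayed inequality is established for an arbitrary square root, well\-definedness --- and with it the whole proposition --- remains unproved along your route.
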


\begin{proof}
(i) For each $x\in M_1$, we have $f(r(x))\odot f(r(x))=f(r(x)\odot r(x))=f(x)$.

(ii) Let $x,y\in M_1$ be such that $f(y)\odot f(y)\leq f(x)$. By Proposition \ref{3.2}(9) and (10),
$r((y\odot y)\vee x)=r(y\odot y)\vee r(x)=(y\vee r(0))\vee r(x)$, whence
\begin{eqnarray*}
\tau(f(x))&=&\tau\big((f(y)\odot f(y))\vee f(x)\big) =\tau\big(f((y\odot y)\vee x)\big)\\
&=& f\big(r((y\odot y)\vee x)\big)=f\big(r(y\odot y)\vee r(x)\big)\quad \mbox{ by Proposition \ref{3.2}(9)}\\
&=& f\big((y\vee r(0))\vee r(x)\big)\quad \mbox{ by Proposition \ref{3.2}(10)}\\
&=& (f(y)\vee f(0))\vee f(r(x))\\
&=& (f(y)\vee f(0))\vee \tau(f(x)).
\end{eqnarray*}
That is, $f(y)\leq \tau(f(x))$. Hence (Sq2) holds.

(iii) We have $\tau(f(x))^-=f(r(x))^-= f(r(x)^-)=f(r(x)\to r(0))=f(r(x))\to f(r(0))=\tau(f(x))\to \tau(f(0))$. Similarly for the second equality, that is, (Sq3) holds.

(iv) We show $\tau:\im(f)\to \im(f)$ is well-defined. Indeed, if $f(x)=f(y)$, then $f(r(x))$ and $f(r(y))$ are square roots of $f(x)=f(y)$.
Parts (i) and (ii) imply that $f(r(x))\odot f(r(x))=f(x)=f(y)$ and so $f(r(x))\leq f(r(y))$. In a similar way, $f(r(y))\leq f(r(x))$.
Thus, $\tau:\im(f)\to \im(f)$ sending $f(x)$ to $f(r(x))$ is a correctly defined square root.
\end{proof}

\begin{cor}\label{corHom}
Let $I$ be a normal ideal of a pseudo MV-algebra $(M;\oplus,^-,^\sim,0,1)$ with a square root $r$. Then:
\begin{itemize}[nolistsep]
\item[{\rm (i)}] The mapping $r_I:M/I\to M/I$ defined by $r_I(x/I)=r(x)/I$ is a square root
on $M/I$.
\item[{\rm (ii)}] For each $x,y\in M$, if $x\odot y^-,y\odot x^-\in I$, then $r(x)\odot r(y)^-,r(y)\odot r(x)^-\in I$.
\end{itemize}
\end{cor}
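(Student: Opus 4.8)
The plan is to deduce both parts from Proposition \ref{Homo} applied to the canonical quotient map, with no new computation. First I would recall from Section 2 that, since $I$ is a normal ideal, the relation $\sim_I$ given by $x\sim_I y$ iff $x\odot y^-,y\odot x^-\in I$ is a congruence on $M$, and the natural projection $\pi\colon M\to M/I$, $\pi(x)=x/I$, is a surjective homomorphism of pseudo MV-algebras with $\im(\pi)=M/I$.

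For (i): apply Proposition \ref{Homo} with $f=\pi$, $M_1=M$, $M_2=M/I$. It produces a square root $\tau\colon \im(\pi)\to\im(\pi)$, i.e.\ $\tau\colon M/I\to M/I$, determined by $\tau(\pi(x))=\pi(r(x))$ for all $x\in M$; rewriting this is $\tau(x/I)=r(x)/I$, which is exactly the map $r_I$ of the statement. Hence $r_I=\tau$ is a square root on $M/I$. In particular, the well-definedness of $r_I$ — that $x/I=y/I$ forces $r(x)/I=r(y)/I$ — is already contained in Proposition \ref{Homo} (its step (iv)), so nothing extra is needed.

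For (ii): the hypothesis ``$x\odot y^-,y\odot x^-\in I$'' says precisely ``$x\sim_I y$'', i.e.\ $x/I=y/I$. Applying the map $r_I$ of part (i), which is in particular a well-defined function, gives $r(x)/I=r_I(x/I)=r_I(y/I)=r(y)/I$, that is $r(x)\sim_I r(y)$; unwinding the definition of $\sim_I$ yields $r(x)\odot r(y)^-,r(y)\odot r(x)^-\in I$, as claimed.

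The argument is short and I do not expect a genuine obstacle; the only care needed is in matching the hypotheses of Proposition \ref{Homo}, namely that $\pi$ really is a homomorphism of pseudo MV-algebras (true because $I$ is normal, by the quotient construction recalled in Section 2) and that its image is all of $M/I$. A more self-contained alternative for (i) would be to check (Sq1)--(Sq3) for $r_I$ directly: (Sq1) and (Sq2) reduce to the corresponding properties of $r$ modulo $I$ together with Proposition \ref{3.2}(9),(10), exactly as in the proof of Proposition \ref{Homo}, and (Sq3) follows since $\pi$ commutes with $^-$, $^\sim$, and $\oplus$; but invoking Proposition \ref{Homo} is cleaner.
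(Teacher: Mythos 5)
Your proposal is correct and matches the paper's own proof exactly: both parts are deduced by applying Proposition \ref{Homo} to the surjective canonical projection $\pi_I:M\to M/I$ (which is a homomorphism because $I$ is normal), and part (ii) is obtained by unwinding the congruence $\sim_I$ through the well-defined map $r_I$ from part (i). No discrepancies to report.
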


\begin{proof}
(i) Consider the natural homomorphism $\pi_I:M\to M/I$ sending $x$ to $x/I$. Since $\pi_I$ is onto, by Proposition \ref{Homo},
$r_I:M/I\to M/I$, defined by $r_I(x/I)=\pi_I(r(x))=r(x)/I$, is a square root.

(ii) Let $x,y\in M$ such that $x\odot y^-,y\odot x^-\in I$. Then $x/I=y/I$ and so by (i), $r(x)/I=r(y)/I$ which means
$r(x)\odot r(y)^-,r(y)\odot r(x)^-\in I$.
\end{proof}

Let $(M;\oplus,^-,^\sim,0,1)$ be a pseudo MV-algebra with a square root $r$. An ideal $I$ of $M$ is called {\em $r$-invariant}
if $r(I)\s I$. For example, each prime ideal $P$ of the pseudo MV-algebra $M$ is $r$-invariant, since
for each $x\in P$, $r(x)\odot r(x)=x\in P$ which entails $r(x)\in P$.

\begin{prop}\label{invIdeal}
Let $(M;\oplus,^-,^\sim,0,1)$ be a pseudo MV-algebra with a square root $r$ and $I$ be a normal ideal of $M$.
Then $I$ is $r$-invariant \iff $I$ is a Boolean ideal of $M$ (that is $x\wedge x^\sim \in I$ for all $x\in M$).
\end{prop}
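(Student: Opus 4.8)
The plan is to route both implications through the single element $r(0)$: for the ideal $I$ I will show that being $r$-invariant and being Boolean (in the sense $x\wedge x^\sim\in I$ for all $x\in M$) are each equivalent to the one condition $r(0)\in I$, and then the asserted equivalence is immediate.

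For "$r$-invariant $\Rightarrow$ Boolean" I would argue as follows. An ideal is non-empty and downward closed, so $0\in I$, and hence $r(0)\in r(I)\subseteq I$. By Proposition \ref{3.2}(5) we have $x\wedge x^\sim\le (x\wedge x^-)\vee(x\wedge x^\sim)\le r(0)$ for every $x\in M$, so $x\wedge x^\sim\in I$ because $I$ is downward closed; thus $I$ is Boolean. For the converse I would first check $r(0)\in I$: applying (Sq1) at $0$ gives $r(0)\odot r(0)=0$, which by the description of the partial addition $+$ recalled in Section 2 means $r(0)\le r(0)^\sim$, hence $r(0)\wedge r(0)^\sim=r(0)$, and substituting $x:=r(0)$ into the Boolean hypothesis yields $r(0)\in I$ (alternatively one may quote $r(0)=\max\{x\wedge x^\sim\mid x\in M\}$ from Proposition \ref{rmk-cor}(vii)). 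Then for any $x\in I$, inequality \eqref{eq:bound} of Proposition \ref{rmk-cor} gives $r(x)\le (x\oplus r(0))\wedge(r(0)\oplus x)\le x\oplus r(0)$; since $x,r(0)\in I$ and $I$ is closed under $\oplus$, we get $x\oplus r(0)\in I$, so $r(x)\in I$ by downward closedness. Hence $r(I)\subseteq I$.

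I do not expect a genuine obstacle here: the whole argument rests on the two facts that $r(0)$ is itself an element of the form $x\wedge x^\sim$ and that $r(x)$ never exceeds $x\oplus r(0)$, so the moment $0\in I$ the maximal Boolean "defect" $r(0)$ is dragged into $I$, and conversely the moment $r(0)\in I$ the ideal becomes closed under $r$. I note that normality of $I$ is not actually used anywhere in this proof; it is assumed in the statement only so that $I$ sits in the usual framework (kernels, quotient $M/I$) in which Boolean ideals are discussed.
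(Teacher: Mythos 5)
Your proof is correct, but it takes a genuinely different route from the paper's. The paper argues through the quotient: $r$-invariance gives $r(0)\in I$, hence the induced square root $r_I$ on $M/I$ (from Corollary \ref{corHom}) satisfies $r_I(0/I)=0/I$, so $M/I$ is Boolean by Theorem \ref{3.5}; conversely a Boolean quotient forces $r_I=\id_{M/I}$, whence $r(x)/I=x/I=0/I$ for $x\in I$. You instead work entirely at the level of elements, reducing both properties to the single condition $r(0)\in I$: Proposition \ref{3.2}(5) gives $x\wedge x^\sim\le r(0)$ so that $r(0)\in I$ forces $I$ to be Boolean, while $r(0)=r(0)\wedge r(0)^\sim$ (or Proposition \ref{rmk-cor}(vii)) together with the bound $r(x)\le x\oplus r(0)$ from \eqref{eq:bound} forces $r(I)\subseteq I$ once $r(0)\in I$. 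Both arguments are sound, and your steps all check out against the cited facts. What your version buys is exactly what you point out: normality of $I$ is never used, so the equivalence holds for arbitrary ideals, whereas the paper's route needs normality just to form $M/I$ and invoke the quotient square root. What the paper's version buys is the conceptual link between Boolean ideals and Boolean quotients, which it reuses elsewhere (e.g.\ in Corollary \ref{proper PMV}); your observation that both conditions are equivalent to the single membership $r(0)\in I$ is a nice sharpening worth recording.
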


\begin{proof}
Assume that $I$ is an $r$-invariant normal ideal of $M$. By Proposition \ref{corHom}, $r_I:M/I\to M/I$, defined in the latter corollary, is a square root on $M/I$.
Since $r_I(0/I)=r(0)/I=0/I$, Theorem \ref{3.5} implies that $M/I$ is a Boolean algebra and so $I$ is a Boolean ideal of $M$.
Conversely, let $I$ be a Boolean ideal of $M$. Then $M/I$ is a Boolean algebra and $r_I:M/I\to M/I$ is the identity map on $M/I$.
Hence for each $x\in I$, we have $r(x)/I=r_I(x/I)=x/I=0/I$, that is $r(x)\in I$. Therefore, $I$ is $r$-invariant.
\end{proof}

\begin{rmk}\label{3.6}
Let $(M;\oplus,^-,^\sim,0,1)$ be a pseudo MV-algebra and $a\in \B(M)\setminus \{0,1\}$. Set $b:=a^-$. Then $[0,a]$ and $[0,b]$ are proper subsets of $M$.
Consider the pseudo MV-algebras $([0,a];\oplus,^{-a},^{\sim a},0,a)$ and $([0,b];\oplus,^{-b},^{\sim b},0,b)$
(see the note just before Proposition \ref{PMV-prop}).
The map $\varphi:M\to [0,a]\times [0,b]$ sending
$x\in M$ to $(x\wedge a,x\wedge b)$ is an isomorphism of pseudo MV-algebras (see \cite[Page 23]{georgescu}).
Hence, each pseudo MV-algebra with the non-empty set $\B(M)\setminus\{0,1\}$ is not directly indecomposable.
Now, let $r$ be a square root on $M$. By Proposition \ref{3.2}(13), $r_a$ and $r_b$ are square roots on the pseudo MV-algebras $[0,a]$ and
$[0,b]$, respectively. Therefore, in this case, $M$ has a decomposition of pseudo MV-algebras with square roots.
\end{rmk}

B{\v{e}}lohl{\'a}vek in \cite[Cor 3]{Be} showed that the class of all commutative residuated lattices with square roots is a variety. Consequently, the class of MV-algebras with square roots is also a variety.
The following theorem shows that the class of pseudo MV-algebras with square roots is a variety, too.

\begin{thm}\label{variety}
The class of all pseudo MV-algebras with square roots is a variety. The same is true for the class of pseudo MV-algebras with weak square roots.
\end{thm}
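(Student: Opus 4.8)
The plan is to enlarge the type of pseudo MV-algebras by one unary operation symbol $r$ and to show that, in this enlarged type, the class of algebras $(M;\oplus,^-,^\sim,0,1,r)$ for which $(M;\oplus,^-,^\sim,0,1)$ is a pseudo MV-algebra and $r$ is a weak square root (respectively, a square root) is an equational class; Birkhoff's theorem then yields that it is a variety. This is the appropriate formulation, because, as noted just after Definition \ref{3.1}, a pseudo MV-algebra admits at most one weak square root, so such an enriched algebra is completely determined by its pseudo MV-reduct. I would also point out that passing to the enlarged type is genuinely needed: in the original type the class of pseudo MV-algebras admitting a square root is not closed under subalgebras, since the three-element MV-algebra $\Gamma(\mathbb{Z},2)$ embeds into $\Gamma(\mathbb{R},2)$ but carries no weak square root, as its middle element is not of the form $z\odot z$.

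First I would note that every defining condition except (Sq2) is literally an equation of the enlarged type. Indeed $x\odot y=(y^-\oplus x^-)^\sim$ and $\vee,\wedge,\to,\rightsquigarrow$ are term operations of every pseudo MV-algebra (by (A6), (A7) and the definitions of $\to$ and $\rightsquigarrow$); hence (A1)--(A8) are equations, (Sq1) is the equation $r(x)\odot r(x)=x$, and (Sq3) is the pair of equations $r(x^-)=r(x)\to r(0)$, $r(x^\sim)=r(x)\rightsquigarrow r(0)$. The heart of the argument is to replace the quasi-identity (Sq2) by honest identities. I claim that, in the presence of (Sq1), condition (Sq2) is equivalent to the conjunction of the two equations
\[
\text{(E1)}\quad r(x)\wedge r(x\vee y)=r(x),\qquad\qquad\text{(E2)}\quad y\wedge r(y\odot y)=y,
\]
the first asserting that $r$ is order-preserving, the second that $y\le r(y\odot y)$; here (E1) could equally well be replaced by the identity $r(x\wedge y)=r(x)\wedge r(y)$ of Proposition \ref{3.2}(7).

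The verification of the claim is short. Assume (Sq1) and (Sq2). Applying (Sq2) with $x:=y\odot y$ gives $y\le r(y\odot y)$, which is (E2) (Proposition \ref{3.2}(4)); and if $a\le b$ then $r(a)\odot r(a)=a\le b$, so $r(a)\le r(b)$ by (Sq2), whence, taking $b:=x\vee y$, we get (E1) (Proposition \ref{3.2}(2)). Conversely, assume (Sq1), (E1), (E2). From (E1) one first gets that $r$ is order-preserving, since $a\le b$ forces $a\vee b=b$ and hence $r(a)=r(a)\wedge r(a\vee b)=r(a)\wedge r(b)$; so $y\odot y\le x$ implies $r(y\odot y)\le r(x)$, and combining this with $y\le r(y\odot y)$ from (E2) we obtain $y\le r(x)$, which is (Sq2).

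Consequently, the class of enlarged algebras in which $r$ is a weak square root is exactly the one axiomatised by (A1)--(A8), (Sq1), (E1), (E2), and the class in which $r$ is a square root is obtained by adjoining the two equations of (Sq3); both are equational classes, hence varieties. (Adding commutativity of $\oplus$ one recovers, via Example \ref{3.4}(ii), the variety of MV-algebras with square roots recalled just before the theorem.) The only real obstacle in this argument is the reduction of the quasi-identity (Sq2) to the identities (E1) and (E2); once that is done, what remains is routine bookkeeping about term operations together with an appeal to Birkhoff's theorem.
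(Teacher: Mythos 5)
Your proposal is correct and follows essentially the same route as the paper: both enlarge the type to $(2,1,1,1,0,0)$, observe that everything except (Sq2) is already equational, replace the quasi-identity (Sq2) by identities expressing monotonicity of $r$ together with $y\le r(y\odot y)$, and conclude by Birkhoff's theorem. The only difference is cosmetic: the paper packages your two identities (E1) and (E2) into the single identity $r((y\odot y)\vee x)\wedge y=y$, which encodes both facts at once.
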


\begin{proof}
Let $V$ be the class of all pseudo MV-algebras $(M;\oplus,^-,^\sim,0,1)$ such that there is a square root on $M$.
Consider the class $W$ of algebras of type $(M;\oplus,^-,^\sim,r,0,1)$ (2,1,1,1,0,0) satisfying the identities (A1)--(A8) and the following additional three identities:
\begin{itemize}[nolistsep]
\item[(1)] $r(x)\odot r(x)=x$;
\item[(2)] $r((y\odot y)\vee x)\wedge y=y$;
\item[(3)] $r(x^-)=r(x)\to r(0)$ and $r(x^\sim)=r(x) \rightsquigarrow r(0)$.
\end{itemize}

First, assume that $M\in V$. There exists a square root $r$ on $M$. We show that $(M;\oplus,^-,^\sim,r,0,1)$ belongs to $W$.
We only need to show (2). For each $x,y\in M$ by Proposition \ref{3.2}(9) and Proposition \ref{rmk-cor}(i),
$r((y\odot y)\vee x)=r(y\odot y)\vee r(x)\geq y\vee r(x)$ and so $r((y\odot y)\vee x)\wedge y=y$. Hence
$(M;\oplus,^-,^\sim,r,0,1)\in W$.

Conversely, let $(M;\oplus,^-,^\sim,r,0,1)$ belong to $W$. Clearly by (A1)--(A8), $(M;\oplus,^-,^\sim,0,1)$ is a
pseudo MV-algebra. Let $y\odot y\leq x$. Then by (2), $y=r((y\odot y)\vee x)\wedge y=r(x)\wedge y$ which means $y\leq r(x)$.
From (3), we get (Sq3). Hence, $(M;\oplus,^-,^\sim,0,1)$ is a pseudo MV-algebra with the square root $r$. That is,
$(M;\oplus,^-,^\sim,r,0,1)\in V$.
Therefore, $V$ is a variety.

Using equations (1)--(2), we can prove that the class of pseudo MV-algebras with weak square roots is also a variety.
\end{proof}

Let $M_1$ and $M_2$ be pseudo MV-algebras and $r_1:M_1\to M_1$ and $r_2:M_2\to M_2$ be square roots.
If $f:M_1\to M_2$ is a homomorphism of pseudo MV-algebras, then
for each $x\in M$, $f(r(x))\odot f(r(x))=f(x)$ and so by (Sq2), $f(r_1(x))\leq r_2(f(x))$. We say the homomorphism $f$
{\em preserves} square roots if
$f(r_1(x))=r_2(f(x))$ for all $x\in M$. In the next theorem, we give a necessary and sufficient condition under which
a homomorphism of pseudo MV-algebras preserves square roots.

\begin{thm}\label{corHome}
{\rm (1)} Let $M_1$ and $M_2$ be pseudo MV-algebras and $r_1:M_1\to M_1$ and $r_2:M_2\to M_2$ be square roots.
If $f:M_1\to M_2$ is a homomorphism of pseudo MV-algebras, then $f$ preserves square roots \iff $\im(f)$ is closed under $r_2$.

{\rm (2)} If $f:M_1\to M_2$ is an isomorphism and $r_1$ is a (weak) square root on $M_1$, then $r_2=f\circ r_1\circ f^{-1}$ is a (weak) square root on $M_2$.
\end{thm}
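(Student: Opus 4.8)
The plan is to prove the two parts separately, since part (1) is a genuine equivalence while part (2) is essentially a transport-of-structure argument.

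For part (1), first observe that the ``only if'' direction is immediate: if $f$ preserves square roots, then for each element $f(x)\in\im(f)$ we have $r_2(f(x))=f(r_1(x))\in\im(f)$, so $\im(f)$ is closed under $r_2$. For the ``if'' direction, assume $\im(f)$ is closed under $r_2$. We already know from the remark preceding the theorem (an application of (Sq2)) that $f(r_1(x))\le r_2(f(x))$ for every $x\in M_1$. So it remains to prove the reverse inequality $r_2(f(x))\le f(r_1(x))$. The key is that $r_2(f(x))\in\im(f)$ by hypothesis, so $r_2(f(x))=f(y)$ for some $y\in M_1$; then $f(y)\odot f(y)=r_2(f(x))\odot r_2(f(x))=f(x)$, i.e. $f(y\odot y)=f(x)$. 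This does not immediately give $y\odot y=x$, but we do not need it: applying Proposition \ref{Homo} to $f$, the map $\tau:\im(f)\to\im(f)$ with $\tau(f(x))=f(r_1(x))$ is a square root on $\im(f)$. Since $\im(f)$ is closed under $r_2$, the restriction of $r_2$ to $\im(f)$ is also a square root on $\im(f)$. By the uniqueness of (weak) square roots on a given pseudo MV-algebra (established in the paragraph after Definition \ref{3.1}), $\tau = r_2|_{\im(f)}$; that is, $f(r_1(x))=r_2(f(x))$ for all $x\in M_1$, which is exactly the statement that $f$ preserves square roots.

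For part (2), set $r_2:=f\circ r_1\circ f^{-1}$, which makes sense since $f$ is a bijection. One checks the axioms directly, using that $f$ (hence $f^{-1}$) is a pseudo MV-algebra homomorphism, so it commutes with $\oplus$, $\odot$, $^-$, $^\sim$, $\to$, $\rightsquigarrow$, and preserves the order. For (Sq1): $r_2(x)\odot r_2(x)=f(r_1(f^{-1}(x)))\odot f(r_1(f^{-1}(x)))=f\big(r_1(f^{-1}(x))\odot r_1(f^{-1}(x))\big)=f(f^{-1}(x))=x$. For (Sq2): if $y\odot y\le x$ in $M_2$, then applying $f^{-1}$ gives $f^{-1}(y)\odot f^{-1}(y)\le f^{-1}(x)$, so by (Sq2) for $r_1$ we get $f^{-1}(y)\le r_1(f^{-1}(x))$, and applying $f$ yields $y\le r_2(x)$. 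For (Sq3): $r_2(x^-)=f\big(r_1(f^{-1}(x)^-)\big)=f\big(r_1(f^{-1}(x))\to r_1(0)\big)=f(r_1(f^{-1}(x)))\to f(r_1(0))=r_2(x)\to r_2(0)$, using that $f^{-1}(0)=0$ and that $f$ preserves $\to$; the $\rightsquigarrow$ identity is analogous. If $r_1$ is only a weak square root, the same computations for (Sq1) and (Sq2) show $r_2$ is a weak square root.

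I do not expect a serious obstacle here; the only point requiring a little care is part (1), where one must resist the temptation to try to deduce $y\odot y = x$ from $f(y\odot y)=f(x)$ (which fails when $f$ is not injective) and instead route the argument through the uniqueness of square roots on $\im(f)$ together with Proposition \ref{Homo}. Everything else is routine verification that homomorphisms commute with the derived operations.
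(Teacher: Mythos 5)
Your proposal is correct and follows essentially the same route as the paper: for part (1) you combine Proposition \ref{Homo} with the uniqueness of (weak) square roots to identify $\tau$ with $r_2|_{\im(f)}$, and for part (2) you carry out the same direct verification of (Sq1)--(Sq3) via transport of structure along $f$. The only difference is cosmetic -- your explicit warning against trying to deduce $y\odot y=x$ from $f(y\odot y)=f(x)$ is a sensible remark the paper leaves implicit.
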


\begin{proof}
(1) ($\Leftarrow$) Suppose that $\im(f)$ is closed under $r_2$. Then $r_2\big|_{\im(f)}:\im(f)\to \im(f)$ is a square root on
the pseudo MV-algebra $\im(f)$.
Since by Proposition \ref{Homo}, the map $\tau:\im(f)\to \im(f)$ defined by $\tau(f(x))=f(r_1(x))$, $x\in M_1$, is a square root,
then $\tau=r_2$ which implies that $r_2(f(x))=\tau(f(x))=f(r_1(x))$ for all $x\in M$.

($\Rightarrow$) Let $f$ preserve square roots. For each $f(x)\in\im(f)$, we have $r_2(f(x))=f(r_1(x))\in\im(f)$.
Therefore, $\im(f)$ is closed under $r_2$.

(2) (Sq1): Let $z\in M_2$. Then $r_2(z)\odot r_2(z)=f(r_1(f^{-1}(z)))\odot f(r_1(f^{-1}(z))) = f(r_1(f^{-1}(z))\odot r_1(f^{-1}(z)))=f(f^{-1}(z))=z$.

(Sq2): Let $y,x\in M_2$ and $y\odot y\le x$. Then $f^{-1}(y)\odot f^{-1}(y)\le f^{-1}(x)$, so that $f^{-1}(y)\le r_1(f^{-1}(x))$ and $y\le f(r_1(f^{-1}(x)))=r_2(x)$.

(Sq3): Let $x\in M_2$, so $r_2(0)=f(r_1(0))$, and $r_2(x^-)=f(r_1(f^{-1}(x)^-))= f(r_1(f^{-1}(x))\to f(r_1(0))=r_2(x)\to r(0)$.
\end{proof}

\begin{lem}\label{3.9}
Let $r$ be a square root on a pseudo MV-algebra $(M;\oplus,^-,^\sim,0,1)$. Then $r(0)^-=r(0)^\sim$, and there exists a unique idempotent element $u\in \B(M)$ such that
$u\vee r(0)=r(0)^-=r(0)^\sim$.
\end{lem}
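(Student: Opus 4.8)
The plan is to split the statement into two parts: first the identity $r(0)^-=r(0)^\sim$, and then the existence and uniqueness of the Boolean element $u$ with $u\vee r(0)=r(0)^-$.

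For the first part, I would invoke Proposition~\ref{rmk-cor}(vii), which already records that $r(0)^-=\min\{x\vee x^-\colon x\in M\}=\min\{x\vee x^\sim\colon x\in M\}=r(0)^\sim$. If one prefers a self-contained argument, note that $r(0)\odot r(0)=0$ forces $r(0)\le r(0)^-$ and $r(0)\le r(0)^\sim$ (since $r(0)\odot r(0)=0$ means exactly $r(0)\le r(0)^-$, equivalently $r(0)\le r(0)^\sim$); taking negations gives $r(0)\le r(0)^-$ and $r(0)\le (r(0)^-)^\sim = r(0)^{-\sim}$, and one then uses (A8) together with the symmetry already established in Proposition~\ref{rmk-cor}(vii). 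The cleanest route is simply to cite Proposition~\ref{rmk-cor}(vii).

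For the existence of $u$, the natural candidate is $u:=(r(0)\ra 0)\odot(r(0)\ra 0) = (r(0)^-\odot r(0)^-)$. By Proposition~\ref{3.2}(11)(b), this element lies in $\B(M)$. It remains to compute $u\vee r(0)$. Using the identity $x^-\odot x^- \oplus x = x^-\vee x$ from \cite[Prop~1.13(a)]{georgescu} with $x=r(0)$, we get $u\oplus r(0) = r(0)^-\vee r(0) = r(0)^-$ (the last equality because $r(0)\le r(0)^-$). Since $u$ is Boolean and $u\oplus r(0)$ dominates both $u$ and $r(0)$ while any upper bound of $\{u,r(0)\}$ dominates $u\oplus r(0)$ (Boolean elements behave well here, as $u\oplus r(0)=u\vee r(0)$ when $u\in\B(M)$ — this follows from Proposition~\ref{3.2}(11)(a) applied to $u$, giving $r(u)=u\oplus r(0)$, combined with $u\vee r(0)\le r(u)$ and the reverse bound), we conclude $u\vee r(0)=r(0)^-=r(0)^\sim$. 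I would verify the step $u\oplus r(0)=u\vee r(0)$ carefully: since $u=u\oplus u$, we have $u\oplus r(0)\ge u$ and $\ge r(0)$; conversely if $u,r(0)\le w$ then $u\oplus r(0)\le w\oplus w$, which need not equal $w$ — so instead I use that $u\vee r(0)$, being $\ge u=r(u)\odot r(u)$ wait, more directly: $u\vee r(0) \ge u$ so $u = (u\vee r(0))\odot(u\vee r(0))$ is false; the robust argument is $r(u\odot u) = u\vee r(0)$ by Proposition~\ref{3.2}(10), and $u\odot u = u$, so $r(u) = u\vee r(0)$, while Proposition~\ref{3.2}(11)(a) gives $r(u)=u\oplus r(0)$; hence $u\oplus r(0)=u\vee r(0)=r(0)^-$.

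For uniqueness, suppose $u,u'\in\B(M)$ both satisfy $u\vee r(0)=r(0)^-=u'\vee r(0)$. Then $r(u)=u\oplus r(0)=u\vee r(0)=u'\vee r(0)=u'\oplus r(0)=r(u')$ by Proposition~\ref{3.2}(11)(a) and the computation above, and since $r$ is injective (noted right after Definition~\ref{3.1}), $u=u'$. I expect the main obstacle to be pinning down the identity $u\oplus r(0)=u\vee r(0)$ for the Boolean element $u$ cleanly; routing it through Proposition~\ref{3.2}(10) and (11)(a) as above avoids any ad hoc lattice manipulation, and the rest is bookkeeping with already-established facts.
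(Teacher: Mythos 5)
Your proof is correct and follows essentially the same route as the paper: the same candidate $u=r(0)^-\odot r(0)^-$, Boolean by Proposition~\ref{3.2}(11)(b), the same identification $u\vee r(0)=r(u)$ via Proposition~\ref{3.2}(10), and the same uniqueness argument from injectivity of $r$. The only cosmetic differences are that you obtain $r(0)^-=r(0)^\sim$ by citing Proposition~\ref{rmk-cor}(vii) rather than computing $u\vee r(0)=r(0)^\sim$ symmetrically, and you reach $u\vee r(0)=r(0)^-$ through $u\oplus r(0)=(r(0)^-\odot r(0)^-)\oplus r(0)=r(0)^-\vee r(0)$ instead of a second application of Proposition~\ref{3.2}(10) to $r(0)^-$ --- both steps the paper itself uses elsewhere.
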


\begin{proof}
Let $u:=r(0)^-\odot r(0)^-$. By Proposition \ref{3.2}(11), $u\in \B(M)$ and we have
\begin{eqnarray*}
u\vee r(0)&=& (r(u)\odot r(u))\vee r(0)=r(u\odot u) \quad\mbox{ by Proposition \ref{3.2}(10) }\\
&=& r(u)\quad \mbox{ since $u\in \B(M)$}\\
&=& r(r(0)^-\odot r(0)^-)\quad \mbox{ by the assumption}\\
&=& \big(r(r(0)^-)\odot r(r(0)^-)\big)\vee r(0)\quad \mbox{ by Proposition \ref{3.2}(10) }\\
&=& r(0)^-\vee r(0)=r(0)^-,
\end{eqnarray*}
since $r(0)\odot r(0)=0$ which yields $r(0)\le r(0)^-$.
In a similar way, using Proposition \ref{3.2}(11), we can show that $u\vee r(0)=r(0)^\sim$.

Now, let $v\in \B(M)$ such that $v\vee r(0)=r(0)\to 0$. Then we have
\begin{eqnarray*}
r(0)^-&=& v\vee r(0)=(r(v)\odot r(v))\vee r(0)=r(v\odot v),\mbox{ by Proposition \ref{3.2}(10)}\\
&=& r(v), \mbox{ since $v\in \B(M)$}.
\end{eqnarray*}
In a similar way, $r(u)=r(0)^-$. It follows that $u=r(u)\odot r(u)=r(v)\odot r(v)=v$.
\end{proof}

We finish this section with the following result.
In Proposition \ref{3.2}(11)(c), we showed that if $r$ is a square root on a pseudo MV-algebra $(M;\oplus,^-,^\sim,0,1)$, then $\im(r)=[r(0),1]$. In the sequel, we show that $\im(r)$ can be converted into a pseudo MV-algebra such that it is isomorphic to $M$.

\begin{thm}\label{intv1}
Let $(M;\oplus,^-,^\sim,0,1)$ be a pseudo MV-algebra with a square root $r$. Then the structure $([r(0),1];\boxplus,',^*,r(0),1)$ is a pseudo MV-algebra, where for given $r(x),r(y) \in r(M)$,
\begin{eqnarray*}
r(x)\boxplus r(y)= r(x\oplus y),\quad r(x)'= r(x)^-\oplus r(0),\quad r(x)^*=r(x)^\sim\oplus r(0).
\end{eqnarray*}

Moreover, the pseudo MV-algebras $(M;\oplus,^-,^\sim,0,1)$ and $([r(0),1];\boxplus,',^*,r(0),1)$ are isomorphic, and the restriction of $r$ onto $r(M)$ is a square root on $r(M)$.
\end{thm}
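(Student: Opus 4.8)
The plan is to exhibit an explicit isomorphism between $M$ and the structure on $r(M)=[r(0),1]$, and to use transport of structure to obtain everything at once. Since $r:M\to M$ is injective (noted right after Definition~\ref{3.1}) and $\im(r)=[r(0),1]$ by Proposition~\ref{3.2}(11)(c), the map $r:M\to r(M)$ is a bijection. I would define the operations on $[r(0),1]$ to be precisely the ones that make $r$ an isomorphism: this is exactly what the displayed formulas do, provided they are well-defined, i.e. depend only on the element $r(x)$ and not on the chosen preimage $x$. Well-definedness is immediate from injectivity of $r$: every element of $r(M)$ has a unique preimage, so $r(x)\boxplus r(y):=r(x\oplus y)$, $r(x)':=r(x)^-\oplus r(0)$, $r(x)^*:=r(x)^\sim\oplus r(0)$ are unambiguous. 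The only subtlety is that the formulas for $'$ and $^*$ are written intrinsically in terms of $r(x)$ rather than as $r(x^-)$, $r(x^\sim)$; so the first real step is to check the identities $r(x^-)=r(x)^-\oplus r(0)$ and $r(x^\sim)=r(x)^\sim\oplus r(0)$. These follow directly from (Sq3): $r(x^-)=r(x)\to r(0)=r(x)^-\oplus r(0)$ and $r(x^\sim)=r(x)\rightsquigarrow r(0)=r(0)\oplus r(x)^\sim$; for the latter one also needs $r(0)\oplus r(x)^\sim=r(x)^\sim\oplus r(0)$, which is Proposition~\ref{rmk-cor}(vi), equation~\eqref{eq1-rmk-cor6} (with $x$ replaced by $r(x)^\sim$, or more directly since $r(0)$ commutes appropriately). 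With these identifications, $r(x)'=r(x^-)$ and $r(x)^*=r(x^\sim)$, so the three operations on $r(M)$ are literally the images under $r$ of $\oplus$, $^-$, $^\sim$ on $M$, and the constants $r(0)$, $r(1)=1$ are the images of $0$, $1$ (using $r(1)=1$ from Proposition~\ref{3.2}(1)).

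Once this is in place, I would simply observe that $r:(M;\oplus,^-,^\sim,0,1)\to ([r(0),1];\boxplus,',^*,r(0),1)$ is a bijection satisfying $r(x\oplus y)=r(x)\boxplus r(y)$, $r(x^-)=r(x)'$, $r(x^\sim)=r(x)^*$, $r(0)=r(0)$, $r(1)=1$ by construction. Therefore the target structure, being the image of a pseudo MV-algebra under a bijection that intertwines all the operations, is itself a pseudo MV-algebra and $r$ is an isomorphism. Concretely, each axiom (A1)--(A8) for $([r(0),1];\boxplus,',^*,r(0),1)$ is obtained by applying $r$ to the corresponding axiom in $M$: e.g. associativity $r(x)\boxplus(r(y)\boxplus r(z))=r(x\oplus(y\oplus z))=r((x\oplus y)\oplus z)=(r(x)\boxplus r(y))\boxplus r(z)$, and similarly for the rest; one should also note that the derived operation $\odot$ on $[r(0),1]$, namely $r(x)\odot_{r(M)} r(y):=(r(y)'\boxplus r(x)')^*$, equals $r(x\odot y)$, so all identities involving $\odot$ transfer as well. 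No separate verification of distributivity or boundedness is needed since these are consequences of the pseudo MV-algebra axioms.

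For the last assertion, that the restriction of $r$ to $r(M)$ is a square root on $r(M)$: since $r(M)$ is (isomorphic to) $M$ via $r$ itself, one natural route is to transport the square root $r$ of $M$ across the isomorphism $r$, which by Theorem~\ref{corHome}(2) gives the square root $r\circ r\circ r^{-1}$ on $r(M)$; this is just $r$ restricted to $r(M)$, since for $r(x)\in r(M)$ we have $(r\circ r\circ r^{-1})(r(x))=r(r(x))$, and $r(r(x))$ does lie in $r(M)=[r(0),1]$. So the restriction of $r$ to $r(M)$ is indeed a square root on $r(M)$, with the caveat that one must check $r$ maps $r(M)$ into $r(M)$, i.e. $r(M)$ is closed under $r$; but $r(M)=[r(0),1]$ and $r$ is order-preserving with $r(0)\le r(r(0))$ and $r(1)=1$, so $r([r(0),1])\subseteq[r(r(0)),1]\subseteq[r(0),1]$, using monotonicity (Proposition~\ref{3.2}(2)).

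I expect the only mildly delicate point to be bookkeeping around the unary operations: making sure the ``intrinsic'' formulas $r(x)^-\oplus r(0)$ and $r(x)^\sim\oplus r(0)$ really coincide with $r(x^-)$ and $r(x^\sim)$, and that the first equals $r(x)\to r(0)$ while the second, which as written is $r(x)^\sim\oplus r(0)$ rather than $r(0)\oplus r(x)^\sim=r(x)\rightsquigarrow r(0)$, still agrees — this is where equation~\eqref{eq1-rmk-cor6} (commutation of $r(0)$ with arbitrary elements under $\oplus$) does the work. Everything else is a routine transport-of-structure argument and should be stated briefly rather than carried out axiom by axiom.
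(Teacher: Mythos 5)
Your proposal is correct and follows essentially the same route as the paper's own proof: both hinge on the identities $r(x)'=r(x)^-\oplus r(0)=r(x)\to r(0)=r(x^-)$ and $r(x)^*=r(x)^\sim\oplus r(0)=r(0)\oplus r(x)^\sim=r(x^\sim)$ (via (Sq3) and equation \eqref{eq1-rmk-cor6}), after which the operations on $[r(0),1]$ are exactly the $r$-images of $\oplus$, $^-$, $^\sim$, so the axioms transfer along the bijection $r$, and the square root on $r(M)$ is obtained from Theorem \ref{corHome}(2) as $r\circ r\circ r^{-1}=r\restriction_{r(M)}$. The paper merely writes out the axiom verification that you compress into ``transport of structure''; the substance is the same.
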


\begin{proof}
(i) First, we note that $r$ is one-to-one, so by Proposition \ref{3.2}(11)(c), the operations $\boxplus$, $'$ and $^*$ are well-defined, and $[r(0),1]$ is closed under these operations.
We verify conditions (A1)--(A8). Let $x,y,z\in M$ be given. \\
(1)
\begin{eqnarray*}
r(x)\boxplus (r(y)\boxplus r(z))&=&r(x)\boxplus r(y\oplus z)=r(x\oplus (y\oplus z))= r((x\oplus y)\oplus z)\\
&=&r(x\oplus y)\boxplus r(z)=(r(x)\boxplus r(y))\boxplus r(z).
\end{eqnarray*}
(2) $r(x)\boxplus r(0)=r(x\oplus 0)=r(x)=r(0\oplus x)=r(0)\boxplus r(x)$. \\
(3) $r(x)\boxplus r(1)=r(x\oplus 1)=r(1)=r(1\oplus x)=r(1)\boxplus r(x)$. \\
(4) $1'=1^-\oplus r(0)=0\oplus r(0)=r(0)$. Similarly, $1^*=r(0)$. \\
(5) First, we note that by (Sq3) and Proposition \ref{rmk-cor}(vi), for each $x\in M$, the following identities hold:
\begin{eqnarray}\label{intv2} r(x)^*&=& r(x)^\sim\oplus r(0)=r(0)\oplus r(x)^\sim=r(x)\rightsquigarrow r(0)=r(x\rightsquigarrow 0)=r(x^\sim),\\
\label{intv3} r(x)'&= & r(x)^-\oplus r(0)=r(x)\to r(0)=r(x\to 0)=r(x^-).
\end{eqnarray}
It follows that
$(r(x)')^*=r(x^-)^*=r(x^{-\sim})=r(x)$ and
$(r(x)^*)'=r(x^\sim)'=r(x^{\sim-})=r(x)$. \\
(6) From (\ref{intv2}) and (\ref{intv3}) deduce that
\begin{eqnarray*}
(r(x)'\boxplus r(y)')^*=(r(x^-)\boxplus r(y^-))^*=r(x^-\oplus y^-)^*=r((x^-\oplus y^-)^\sim)=r((x^\sim\oplus y^\sim)^-).
\end{eqnarray*}
A similar calculus shows that $(r(x)^*\boxplus r(y)^*)'=r((x^\sim\oplus y^\sim)^-)$. Hence $(r(x)'\boxplus r(y)')^*=(r(x)^*\boxplus r(y)^*)'$. \\
(7) Let $u\boxdot v:=(v'\boxplus u')^*$ for all $u,v\in [r(0),1]$. Then from (\ref{intv2}) and (\ref{intv3}) it follows that
\begin{eqnarray}
\label{intv4} r(x)\boxdot r(y)= (r(y)'\boxplus r(x)')^*=(r(y^-)\boxplus r(x^-))^*=r(y^-\oplus x^-)^*=r((y^-\oplus x^-)^\sim)=r(x\odot y).
\end{eqnarray}
Similarly to (6), using (\ref{intv2})--(\ref{intv4}), we can prove
(A6) and (A7) hold.
Therefore, $([r(0),1];\boxplus,',^*,r(0),1)$ is a pseudo MV-algebra.

(ii) Consider the mapping $f:M\to [r(0),1]$ defined by $f(x)=r(x)$. Clearly, $f(x)\in \im(r)=[r(0),1]$.
By (\ref{intv2}) and (\ref{intv3}), for each $x\in M$, $f(x)^*=r(x)^*=r(x^\sim)=f(x^\sim)$ and
$f(x)'=r(x)'=r(x^-)=f(x^-)$.
In addition, for each $x,y\in M$, $f(x)\boxplus f(y)=r(x)\boxplus r(y)=r(x\oplus y)=f(x\oplus y)$, and $f(0)=r(0)$, $f(1)=r(1)=1$.
Therefore, $f$ is an isomorphism, $M$ is isomorphic to $\im(r)$, and using Theorem \ref{corHome}(2), for each $x\in M$, we have $f(r(f^{-1}(f(x))) = f(r(x)) =r(r(x))$, is a square root on $r(M)=[r(0),1]$.
\end{proof}

\begin{rmk}\label{intv5}
Let $(M;\oplus,^-,^\sim,0,1)$ be a pseudo MV-algebra with a square root $r$.

(i) We have $M\cong [r(0),1]\cong [r^2(0),1]\cong\cdots\cong [r^n(0),1]$, $n\ge 1$, and the restriction of $r$ onto $[r^n(0),1]$ is a square root on $r^n(M)=[r^n(0),1]$.

(ii) (a) If $M$ is a Boolean algebra, then $r=\id_M$ and $M=r(M)$. (b) If $M$ is strict, then $[r(0),1]$ is strict, since
$r(r(0))=r(r(0)^-)=r(r(0))\ra r(0)=r(r(0))^-\oplus r(0)=r(r(0))'$. (c) If $r(0)>0$ and $r$ is not strict, by Theorem \ref{3.10}(iii), $M\cong M_1\times M_2$ such that $M_1$ is a Boolean algebra and $M_2$ is a strict pseudo MV-algebra and they both are uniquely determined by $M$. In a similar way, $[r(0),1]\cong M_{1,1}\times M_{2,1}$.
Theorem \ref{3.10} implies that $M_1\cong M_{1,1}$ and $M_2\cong M_{2,1}$.
We can continue this process $n$-times and from (i) we conclude that $[r^n(0),1]\cong M_{1,n}\times M_{2,n}$, where
$M_{1,n}$ is a Boolean algebra and $M_{2,n}$ is a strict pseudo MV-algebra. Whence
$M_1\cong M_{1,n}$ and $M_2\cong M_{2,n}$.
\end{rmk}

\section{Pseudo MV-algebras with Strict Square Roots}%4

In the section, we present an important class of pseudo MV-algebras with strict square roots. These pseudo MV-algebras form a subvariety of the variety of pseudo MV-algebras with square root. The complete characterization of strict pseudo MV-algebras will be postponed to the next section. In the end of the section, we introduce strongly atomless pseudo MV-algebras which imply the strictness of pseudo MV-algebras.

Consider a pseudo MV-algebra $M$ with a square root $r:M\to M$. From $r(0)\odot r(0)=0$, we get that

\begin{eqnarray}\label{eq:strict}
\label{r(0)} r(0)\leq r(0)^-\wedge r(0)^\sim= r(0)^-=r(0)^\sim.
\end{eqnarray}
In a particular case, if in \eqref{eq:strict} we have equality, the square root will say to be strict:

\begin{defn}\label{3.7}
A pseudo MV-algebra $(M;\oplus,^-,^\sim,0,1)$ with a square root (weak square root) $r:M\to M$ is called {\em strict} if
$r(0)=r(0)^-$.
\end{defn}

Due to Lemma \ref{3.9}, a square root $r$ on a pseudo MV-algebra $M$ is strict \iff $r(0)=r(0)^\sim$.

For example, (iii) and (v) of Example \ref{3.4} give a strict square root and a strict weak square root, respectively, on a pseudo MV-algebra, whereas \ref{3.4}(i) is an example of a pseudo MV-algebra whose square root is not strict. Another example of a strict weak square root on a non-symmetric pseudo MV-algebra will be done in Example \ref{example}.

Consider the assumptions of Theorem \ref{corHome}. If $r_1:M_1\to M_1$ is strict and $f:M_1\to M_2$ preserves square roots, then
$r_2(0)=f(r_1(0))=f(r_1(0)^-)=(f(r_1(0)))^-=(r_2(f(0)))^-=(r_2(0))^-$. So, $r_2$ is strict, too.

We introduce the following notions. Let $X$ be a non-empty subset of  a partially ordered set $(P,\le)$. We define $\uparrow X:=\{a\in P\mid x\le a$ for some $x\in P\}$ and $\downarrow X:=\{a\in P\mid a\le x$ for some $x\in P\}$.

\begin{prop}\label{strict-pro}
Let $s:M\to M$ be a strict square root on a pseudo MV-algebra $(M;\oplus,^-,^\sim,0,1)$. The following statements hold:
\begin{itemize}[nolistsep]
\item[{\rm (i)}] For each $x\in M$, $s(x)^-\vee s(x)^\sim\leq s(0)$ and $s(0)\oplus s(x)=s(x)\oplus s(0)=1$.

\item[{\rm (ii)}] For each $b\in \B(M)$, $s(0)\leq b\leq 1$ implies that $b=1$, consequently, $(\downarrow s(0)\cup \uparrow s(0))\cap \B(M)=\{0,1\}$ and if $M$ is not proper, then $\B(M)\cap s(M)=\{1\}$.

\item[{\rm (iii)}]  For all $x,y\in M$, $s(x)\to (s(x)\odot s(y))=s(y)=s(x)\rightsquigarrow (s(y)\odot s(x))$.

\item[{\rm (iv)}] The square root $s$ is standard. 

\item[{\rm (v)}] For all $x\in M$, there exists $z_x\in M$ such that $x=z_x\oplus z_x$,
 and $x\leq s(0)\oplus z_x$.

\item[{\rm (vi)}]  For all $x\in M$, we have $(s(0)\oplus z_x)^2=x$.
\end{itemize}
\end{prop}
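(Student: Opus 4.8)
The plan is to feed the strictness hypothesis $s(0)=s(0)^-=s(0)^\sim$ (the second equality by Lemma \ref{3.9}) together with the monotonicity $s(0)\le s(x)$ for all $x\in M$ (Proposition \ref{3.2}(1)) into formulas already established in Propositions \ref{3.2}, \ref{rmk-cor} and \ref{PMV-prop}; each of the six items then collapses to a short computation, the only real subtlety being careful bookkeeping of the two negations $^-$ and $^\sim$ and using strictness precisely where it is needed.

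\emph{Items (i) and (ii).} Since $s(0)\le s(x)$ and the unary operations are order-reversing, $s(x)^-\le s(0)^-=s(0)$ and $s(x)^\sim\le s(0)^\sim=s(0)$, giving $s(x)^-\vee s(x)^\sim\le s(0)$. For the identities in (i) I use the standard pseudo MV-algebra fact that $a\oplus b=1$ whenever $b^-\le a$ (indeed $a\oplus b\ge b^-\oplus b=b\to b=1$): for $s(x)\oplus s(0)=1$ apply it with $a=s(x)$, $b=s(0)$, using $s(0)^-=s(0)\le s(x)$; for $s(0)\oplus s(x)=1$ apply it with $a=s(0)$, $b=s(x)$, using the inequality $s(x)^-\le s(0)$ just proved. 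For (ii): if $b\in\B(M)$ and $s(0)\le b$, then $b^-\in\B(M)$ and $b^-\le s(0)^-=s(0)$, so Proposition \ref{3.2}(3) forces $b^-=0$, i.e. $b=1$; combined with Proposition \ref{3.2}(3) itself (which gives $\downarrow s(0)\cap\B(M)=\{0\}$) this yields $(\downarrow s(0)\cup\uparrow s(0))\cap\B(M)=\{0,1\}$. Finally, $s(M)=[s(0),1]$ by Proposition \ref{3.2}(11)(c), whence $\B(M)\cap s(M)=\uparrow s(0)\cap\B(M)=\{1\}$ (for non-degenerate $M$ one has $0\notin s(M)$, which is the content of the ``not proper'' clause).

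\emph{Items (iii) and (iv).} For (iii), apply Proposition \ref{PMV-prop}(iv): $s(x)\to(s(x)\odot s(y))=(s(x)\to 0)\vee s(y)=s(x)^-\vee s(y)$, and by (i) together with $s(0)\le s(y)$ we have $s(x)^-\le s(0)\le s(y)$, so this equals $s(y)$; the identity with $\rightsquigarrow$ is dual, using $s(x)^\sim\le s(0)\le s(y)$. For (iv) I use the definition $u\odot v=(v^-\oplus u^-)^\sim$ and strictness: $s(x)\odot s(0)=(s(0)^-\oplus s(x)^-)^\sim=(s(0)\oplus s(x)^-)^\sim$ and $s(0)\odot s(x)=(s(x)^-\oplus s(0)^-)^\sim=(s(x)^-\oplus s(0))^\sim$; these agree by \eqref{eq1-rmk-cor6} of Proposition \ref{rmk-cor}(vi) (that is, $y\oplus s(0)=s(0)\oplus y$ for every $y\in M$) applied to $y=s(x)^-$. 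Hence (Sq4) holds and $s$ is standard.

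\emph{Items (v) and (vi).} Put $z_x:=s(x^-)^\sim$. By \eqref{eq-rmk-cor'} of Proposition \ref{rmk-cor}(i), $z_x\oplus z_x=s(x^-)^\sim\oplus s(x^-)^\sim=x$; by \eqref{eq1-rmk-cor4} of Proposition \ref{rmk-cor}(v), $s(0)\oplus z_x=s(0)\oplus s(x^-)^\sim=s(x)$, and $x\le s(x)$ by Proposition \ref{3.2}(1), which proves (v). Then $(s(0)\oplus z_x)^2=s(x)\odot s(x)=x$ by (Sq1), which is (vi). The only place one could slip is in (ii) and (iv): strictness must be invoked exactly to collapse $s(0)^-$ and $s(0)^\sim$ to $s(0)$, after which (iv) is literally the previously-proven commuting identity $s(x)^-\oplus s(0)=s(0)\oplus s(x)^-$; apart from this there is no genuine obstacle — every step is a one-line substitution into the cited results.
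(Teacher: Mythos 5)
Your proposal is correct, and for items (i)--(iv) it follows essentially the same path as the paper: order-reversal of the negations plus strictness for (i), Proposition \ref{3.2}(3) applied to $b^-$ for (ii), Proposition \ref{PMV-prop}(iv) for (iii), and the commuting identity \eqref{eq1-rmk-cor6} for (iv). Where you genuinely diverge is in (v) and (vi). The paper passes to the representing unital $\ell$-group $(G,u)$, uses $-u+s(0)=-s(0)$, and carries out two fairly long explicit computations with the group addition to show $x\le z_x+s(0)$ and $(s(0)\oplus z_x)^2=x$. You instead stay entirely inside the pseudo MV-algebra and observe that, with $z_x=s(x^-)^\sim$, the already-established identity $s(0)\oplus s(x^-)^\sim=s(x)$ from Proposition \ref{rmk-cor}(v) (equation \eqref{eq1-rmk-cor4}, which holds for any square root, not only strict ones) collapses both items at once: (v) becomes $x\le s(x)$, i.e.\ Proposition \ref{3.2}(1), and (vi) becomes (Sq1). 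This is shorter, avoids the detour through $\Gamma(G,u)$, and in fact shows that (v) and (vi) do not need strictness at all beyond what is packaged into the cited lemmas; the only price is that it leans on \eqref{eq1-rmk-cor4}, whose own proof uses (Sq3), whereas the paper's group-theoretic computation is more self-contained at the level of this proposition.
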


\begin{proof}
(i) For each $x\in M$, we have $s(0)\leq s(x)$ and so $s(x)^-\leq s(0)^-=s(0)$ and $s(x)^\sim\leq s(0)^\sim =s(0)$. Due to \cite[Prop 1.9]{georgescu}, we have $x\oplus s(0)=1=s(0)\oplus x$.

(ii) Let $b\in \B(M)$ such that $s(0)\leq b\leq 1$. First we note that if $0\neq 1$, then $s(0)<b$
(otherwise, by Proposition \ref{3.2}(6), $0=s(0)\odot s(0)=b\odot b=b$). Then $b^-<s(0)^-=s(0)$ and so $b^-=(b^-)^2\leq s(0)\odot s(0)=0$ which implies that
$b=1$. Therefore, by Proposition \ref{3.2}(3),  $(\downarrow s(0)\cup \uparrow s(0))\cap \B(M)=\{0,1\}$.
Clearly, $1=s(1)\in \B(M)\cap s(M)$. Now, let $b\in \B(M)\cap s(M)$. Then by (i), $b^-\leq s(0)$ and so $b=0$ which means that $b=1$.

(iii)  By Proposition \ref{PMV-prop}(iv),
\begin{eqnarray*}
s(x)\to (s(x)\odot s(y))&=& s(x)^-\vee s(y),\\
s(x)\rightsquigarrow (s(y)\odot s(x))&=& s(x)^\sim\vee s(y).
\end{eqnarray*}
By (i), $s(x)^-\vee s(x)^\sim\leq s(0)\leq s(y)$, so we have $x,y\in M$ $s(x)\to (s(x)\odot s(y))=s(y)=s(x)\rightsquigarrow (s(y)\odot s(x))$.

(iv) First, we note that it is possible to show that (Sq4) is equivalent to $s(0)\odot s(x)=s(x)\odot s(0)$ for all $x\in M$. Indeed, (Sq4) implies directly
$s(0)\odot s(x)=s(x)\odot r(0)$. Conversely, we have $s(0)\odot x=s(0)\odot s(x)\odot s(x)=s(x)\odot s(0)\odot s(x)=s(x)\odot s(x)\odot s(0)=x\odot s(0)$.

Therefore, for each $x\in M$, by Proposition \ref{rmk-cor}\eqref{eq1-rmk-cor6}, we get
$s(0)\odot x=s(0)^-\odot x=(x^\sim\oplus s(0))^-=(s(0)\oplus x^\sim)^-=x\odot s(0)$, that is $s$ is strict.

(v) By (Sq1), we have that $x=s(x^-)^\sim\oplus s(x^-)^\sim$. Set $z_x:=s(x^-)^\sim$, then $x=z_x\oplus z_x$.
Property (iv) implies that $s(0)+ z_x= s(0)\oplus z_x=z_x\oplus s(0)= z_x+ s(0)$. Let $(G,u)$ be a unital $\ell$-group such that $M=\Gamma(G,u)$. Taking into account that $s(0)=s(0)^\sim=-s(0)+u$, so $-u+s(0)=-s(0)$, we have
\begin{eqnarray*}
-x+(z_x+s(0))&=& -(z_x\oplus z_x)+z_x+s(0)=-\big((z_x+z_x)\wedge u \big)+z_x+s(0)\\
&=& \big((-z_x-z_x)\vee -u \big)+z_x+s(0)\\
&=& \big(-z_x-z_x+z_x+s(0) \big)\vee \big(-u+z_x +s(0)\big)\\
&=& \big(-z_x+s(0) \big)\vee \big(-s(0)+z_x \big)\quad \text{ use the second part of (iv)}\\
&=&|-s(0)+z_x|\geq 0.
\end{eqnarray*}

(vi) Let $x\in M$ and $s$ be strict. By (iv), we have
\begin{eqnarray*}
(s(0)\oplus z_x)^2&=& (z_x\oplus s(0))\odot (s(0)\oplus z_x)=\Big(\big((z_x+s(0))\wedge u\big)-u+ \big((s(0)+z_x)\wedge u\big)\Big)\vee 0\\
&=& \Big(\big((z_x+s(0)-u)\wedge 0\big)+ \big((s(0)+z_x)\wedge u\big)\Big)\vee 0\\
&=& \Big(\big(z_x+s(0)-u+s(0)+z_x\big)\wedge u\wedge (z_x+s(0))\wedge (s(0)+z_x)\Big)\vee 0\\
&=& \Big(\big(z_x+s(0)-u+s(0)+z_x\big)\wedge u\wedge (s(0)\oplus z_x) \wedge (z_x\oplus s(0))\Big)\vee 0\\
&=& \big((z_x\oplus (s(0)-u+s(0))\oplus z_x\big)\wedge (s(0)\oplus z_x)\quad \mbox{ by the assumption.}
\end{eqnarray*}
Note that $s(0)+s(0)=(u-s(0))+s(0)=u$, so $s(0)=u-s(0)$ entails that $-u+s(0)=-s(0)$.
Hence, $s(0)-u+s(0)=0$, consequently we have
\begin{eqnarray*}
(s(0)\oplus z_x)^2&=& (z_x\oplus 0\oplus z_x)\wedge (s(0)\oplus z_x)\\
&=& x\wedge (s(0)\oplus z_x)=x \quad \mbox{ by (v)}.
\end{eqnarray*}
\end{proof}

From Proposition \ref{strict-pro}(iv) and Proposition \ref{rmk-cor}(1), it follows that if $s$ is a strict square root on a pseudo MV-algebra $M$, then, for each $x\in M$, there is $z_x\in M$ such that $x=z_x\oplus z_x$ and $z_x\oplus s(0)=s(0)\oplus z_x$.

\begin{thm}\label{3.10}
Let $r:M\to M$ be a square root on a pseudo MV-algebra $(M;\oplus,^-,^\sim,0,1)$. Then $M$ satisfies only one of the following statements:
\begin{itemize}[nolistsep]
\item[{\rm (i)}] The pseudo MV-algebra $M$ is a Boolean algebra.
\item[{\rm (ii)}] The pseudo MV-algebra $M$ is a strict pseudo MV-algebra.
\item[{\rm (iii)}] The pseudo MV-algebra $M$ is isomorphic to the direct product $M_1\times M_2$, where $M_1$ is a
Boolean algebra and $M_2$ is a strict pseudo MV-algebra. Moreover, the Boolean algebra $M_1$ and the strict pseudo MV-algebra $M_2$ are uniquely determined by $M$ up to isomorphism.
\end{itemize}
\end{thm}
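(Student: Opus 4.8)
We assume, as is standing in the paper, that $0\ne 1$. The plan is to attach to $M$ the unique idempotent $u\in\B(M)$ produced by Lemma \ref{3.9}, characterised by $u\vee r(0)=r(0)^-=r(0)^\sim$, and to show that (i), (ii), (iii) correspond respectively to $u=1$, $u=0$ and $0<u<1$. Since $u$ is uniquely determined by $M$ (the square root $r$ being itself unique), these three options are mutually exclusive and exhaustive, which takes care of the ``only one of'' clause. For the identification: from $u\vee r(0)=r(0)^-$ together with $0\ne 1$ we get $u=1\Leftrightarrow r(0)^-=1\Leftrightarrow r(0)=0$, which by Theorem \ref{3.5} is equivalent to $M$ being a Boolean algebra; this is (i). If $u=0$, then $r(0)^-=0\vee r(0)=r(0)$, i.e.\ $M$ is strict; conversely, if $r(0)=r(0)^-$ then $u\le r(0)$, and Proposition \ref{3.2}(3) forces $u=0$. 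Hence $u=0$ is exactly (ii).

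It remains to handle the case $0<u<1$ and to check that it yields (iii). First I would record the routine fact that if $N_1$ and $N_2$ are pseudo MV-algebras carrying square roots $r_1$ and $r_2$, then $r_1\times r_2$ is a square root on $N_1\times N_2$: (Sq1) and (Sq2) are verified coordinatewise, and (Sq3) holds because $^-$, $^\sim$, $\to$ and $\rightsquigarrow$ are computed coordinatewise on the product; by the uniqueness of square roots observed just after Definition \ref{3.1}, it is \emph{the} square root of $N_1\times N_2$, and, likewise, any isomorphism transports the square root to the square root (cf.\ Theorem \ref{corHome}(2)). Now $u\in\B(M)\setminus\{0,1\}$, so Remark \ref{3.6} gives an isomorphism $\varphi\colon M\to[0,u]\times[0,u^-]$, $x\mapsto(x\wedge u,x\wedge u^-)$, and Proposition \ref{3.2}(13) equips the two factors with square roots $r_u$ and $r_{u^-}$, where $r_u(0)=r(0)\wedge u$ and $r_{u^-}(0)=r(0)\wedge u^-$. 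By uniqueness the square root of the product is $r_u\times r_{u^-}$, so $\varphi(r(0))=(r_u(0),r_{u^-}(0))$. Applying $\varphi$ to the Lemma \ref{3.9} identity $r(0)^-=u\vee r(0)$ and comparing coordinates --- the first coordinate of $u\vee r(0)$ is $u$ because $r_u(0)\le u$ --- we obtain that the negation of $r_u(0)$ inside $[0,u]$ is the top element $u$, hence $r_u(0)=0$, and that the negation of $r_{u^-}(0)$ inside $[0,u^-]$ equals $r_{u^-}(0)$. By Theorem \ref{3.5}, $M_1:=[0,u]$ is a Boolean algebra, and by Definition \ref{3.7}, $M_2:=[0,u^-]$ is strict; both are non-degenerate because $0<u<1$, and $M\cong M_1\times M_2$. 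This is (iii).

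Finally, for the uniqueness of the factors, suppose $M\cong N_1\times N_2$ with $N_1$ a non-degenerate Boolean algebra and $N_2$ a non-degenerate strict pseudo MV-algebra. By Example \ref{3.4}(i) the square root of $N_1$ is $\id_{N_1}$, so by the remark above the square root of $N_1\times N_2$ is $\id_{N_1}\times r_{N_2}$, whose value at $0$ is $(0,r_{N_2}(0))$; using strictness of $N_2$ we get $r(0)^-=(1_{N_1},r_{N_2}(0))$. Thus $(1_{N_1},0_{N_2})$ is an idempotent of $\B(N_1\times N_2)$ with $(1_{N_1},0_{N_2})\vee r(0)=(1_{N_1},r_{N_2}(0))=r(0)^-$, so by the uniqueness clause of Lemma \ref{3.9} it is the idempotent attached to $N_1\times N_2$, and it corresponds under the isomorphism $M\cong N_1\times N_2$ to $u$. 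Consequently $N_1\cong[0,(1_{N_1},0_{N_2})]\cong[0,u]=M_1$ and $N_2\cong[0,(0_{N_1},1_{N_2})]\cong[0,u^-]=M_2$, so the Boolean and strict factors are determined by $M$ up to isomorphism. The one point that needs genuine care is the bookkeeping of how $r$, the idempotent $u$, and the unary operations behave under the decomposition $M\cong[0,u]\times[0,u^-]$; once that is in place, the whole argument rests on Lemma \ref{3.9}, Theorem \ref{3.5} and Proposition \ref{3.2}(13).
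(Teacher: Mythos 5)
Your proposal is correct and follows essentially the same route as the paper's proof: both arguments hinge on the Boolean element $u$ with $u\vee r(0)=r(0)^-$ (the paper takes $u=r(0)^-\odot r(0)^-$ directly, which Lemma \ref{3.9} shows is the same element), split into the cases $u=1$, $u=0$, $0<u<1$, decompose via Remark \ref{3.6} and Proposition \ref{3.2}(13), and settle uniqueness through the uniqueness clause of Lemma \ref{3.9}. The only differences are cosmetic — you obtain $r_u(0)=0$ and the strictness of $r_{u^-}$ by pushing the identity $u\vee r(0)=r(0)^-$ through the product decomposition and comparing coordinates, where the paper computes $r(0)\odot u$ and $r(0)\odot u^\sim$ directly — and your explicit remark that the factors are non-degenerate, which makes the mutual exclusivity of (i)--(iii) cleaner than in the paper.
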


\begin{proof}
Due to Proposition \ref{3.2}(11), the element $u:=r(0)^-\odot r(0)^-$ is a Boolean element. We have three cases.

Case 1. If $u=1$, then $r(0)^-=1$ entails that $r(0)=0$. Theorem \ref{3.5} implies $M$ is a Boolean algebra.

Case 2. If $u=0$, then $r(0)^-\odot r(0)^-=0$ and so $r(0)^-\leq r(0)$. In view of (\ref{r(0)}), $r(0)=r(0)^-$, that is, $M$ is a strict pseudo MV-algebra.

Case 3. Let $u\notin \{0,1\}$. Consider pseudo MV-algebras
$([0,u];\oplus,^{-u},^{\sim u},0,u)$ and $([0,v];\oplus,^{-v},^{\sim v},0,v)$, where $v=u^-=u^\sim$.
By Proposition \ref{3.2}(13), they have square roots $r_u:[0,u]\to [0,u]$ and $r_v:[0,v]\to [0,v]$ respectively;
recall that $r_u(x)=r(x)\odot u$ and $r_v(y)=r(y)\odot v$.
We have
\begin{eqnarray}\label{159}
r_u(0)&=& r(0)\odot u=r(0)\odot \big(r(0)^-\odot r(0)^-\big)=0. \\
\label{1591} r_v(0)&=& r(0)\odot v=r(0)\odot \big(r(0)^-\odot r(0)^-\big)^\sim=r(0)\odot (r(0)\oplus r(0))=r(0).
\end{eqnarray}
Note that $(r(0)\oplus r(0))=(r(0)^-\odot r(0)^-)^\sim$ is an element of $\B(M)$ and so
$r(0)\odot (r(0)\oplus r(0))=r(0)\wedge (r(0)\oplus r(0))=r(0)$.
From (\ref{159}) and Theorem \ref{3.5}, we get that $[0,a]$ is a Boolean algebra, and from (\ref{1591}) and Proposition \ref{3.9}, we have $[0,v]$ is a strict pseudo MV-algebra.

In addition, by Remark \ref{3.6}, $M\cong [0,u]\times [0,v]$.

Now, we prove the second part of (iii), which is the uniqueness of the direct product. Let $M\cong_{\varphi} M_1\times M_2$, where $M_1$ is a Boolean algebra
and $M_2$ is a strict pseudo MV-algebra.
Let $0_1$ and $0_2$ ($1_1$ and $1_2$) be the least (greatest) elements of $M_1$ and $M_2$, respectively.
Consider the square roots $s_1:M_1\to M_1$ and $s_2:M_1\to M_2$ (by the assumption, $s_1:=\id_{M_1}$ and $s_2$ is strict).
Clearly, $s:=(s_1,s_2):M_1\times M_2\to M_1\times M_2$, defined by $s(x_1,x_2)=(s_1(x_1),s_2(x_2))$, is a square root on $M_1\times M_2$. Also,
\begin{eqnarray*}
s(0_1,0_2)\to (0_1,0_2)&=& (s_1(0_1),s_2(0_2))\to (0_1,0_2)=(0_1\to 0_1,s_2(0_2)\to 0_2)\\
&=& (1_1,s_2(0_2)\to 0_2)=(1_1,0_2)\vee (0_1,s_2(0_2)\to 0_2)\\
&=& (1_1,0_2)\vee (0_1,((s_2(0)\to 0)\odot (s_2(0)\to 0))\vee s_2(0))\quad \mbox{ by Lemma \ref{3.9}}\\
&=& (1_1,0_2)\vee (0_1,(s_2(0)\odot s_2(0))\vee s_2(0))=(1_1,0_2)\vee (0_1,0\vee s_2(0))\\
&=& (1_1,0_2)\vee (0_1,s_2(0))=(1_1,0_2)\vee s(0_1,0_2).
\end{eqnarray*}
Hence, $(1_1,0_2)$ satisfies the conditions of Lemma \ref{3.9} which means $\varphi^{-1}(1_1,0_2)$ satisfies the conditions, too.
Lemma \ref{3.9} implies that $\varphi^{-1}(1_1,0_2)=u$ and, consequently, $[0,u]\cong [(0_1,0_2),(1_1,0_2)]\cong M_1$.
Also, $u^\sim=\varphi^{-1}(1_1,0_2)^\sim=\varphi^{-1}(0_1,1_2)$, so $[0,u^\sim]\cong [(0_1,0_2),(0_1,1_2)]\cong M_2$.
\end{proof}

\begin{cor}\label{proper PMV}
Let $r:M\to M$ be a square root on a pseudo MV-algebra $(M;\oplus,^-,^\sim,0,1)$.
\begin{itemize}[nolistsep]
\item[{\rm (i)}] If $M$ does not meet the conditions {\rm (i)} and
{\rm (ii)} in Theorem {\rm \ref{3.10}}, then there exists a normal ideal $I$ on $M$ such that $M/I$ is a strict pseudo MV-algebra.
\item[{\rm (ii)}] If $I$ is a normal ideal of $M$ such that $M/I$ is strict, then $[0,u]\s I$, where $u=r(0)^-\odot r(0)^-$.
\end{itemize}
\end{cor}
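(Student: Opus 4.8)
Throughout write $u:=r(0)^-\odot r(0)^-$; by Proposition \ref{3.2}(11) this is a Boolean element of $M$, and by \eqref{eq:strict} we have $r(0)\le r(0)^-=r(0)^\sim$. The plan is to exhibit $[0,u]$ as the relevant ideal for (i) — which is essentially the product decomposition of Theorem \ref{3.10} read off as a quotient — and to deduce (ii) by transporting the strictness of $M/I$ into the congruence $\sim_I$.

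For (i), suppose $M$ is neither a Boolean algebra nor a strict pseudo MV-algebra. Then $M$ falls under case (iii) of Theorem \ref{3.10}, and from its proof $u\in\B(M)\setminus\{0,1\}$ while $[0,u^\sim]$ is a strict pseudo MV-algebra. I would take $I:=\Ker(\phi_{u^\sim})$, where $\phi_{u^\sim}\colon M\to[0,u^\sim]$, $\phi_{u^\sim}(x)=x\wedge u^\sim$, is the surjective homomorphism recalled before Proposition \ref{PMV-prop}. Since the kernel of a homomorphism of pseudo MV-algebras is a normal ideal, $I$ is a normal ideal, and surjectivity of $\phi_{u^\sim}$ (homomorphism theorem) gives $M/I\cong[0,u^\sim]$, which is strict (its square root being the induced $r_I$ of Corollary \ref{corHom}). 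A short lattice computation also identifies $I$ with the principal ideal $[0,u]$: for Boolean $u$ one has $u\vee u^\sim=1$ and $u\wedge u^\sim=u\odot u^\sim=0$, so $x\wedge u^\sim=0$ forces $x=x\wedge(u\vee u^\sim)=x\wedge u\le u$, and conversely $x\le u$ gives $x\wedge u^\sim\le u\wedge u^\sim=0$.

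For (ii), let $I$ be any normal ideal of $M$ with $M/I$ strict. By Corollary \ref{corHom}(i), $r_I\colon M/I\to M/I$, $r_I(x/I)=r(x)/I$, is a square root on $M/I$, so strictness of $M/I$ means $r_I(0/I)=\bigl(r_I(0/I)\bigr)^-$, i.e.\ $r(0)/I=r(0)^-/I$ in $M/I$. Applying $\odot$ to this identity and using (Sq1),
\begin{equation*}
0/I=(r(0)\odot r(0))/I=(r(0)/I)\odot(r(0)/I)=(r(0)^-/I)\odot(r(0)^-/I)=(r(0)^-\odot r(0)^-)/I=u/I,
\end{equation*}
so $u/I=0/I$, i.e.\ $u\in I$. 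Since an ideal is downward closed, $[0,u]\s I$. Combined with (i), this also shows that $[0,u]=\Ker(\phi_{u^\sim})$ is the least normal ideal of $M$ with strict quotient.

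I do not expect a genuine obstacle: the structural content is already carried by Theorem \ref{3.10} and Corollary \ref{corHom}, and the remaining work is the elementary identification $\Ker(\phi_{u^\sim})=[0,u]$ and the bookkeeping modulo $\sim_I$. The one point to keep in mind is that \emph{strict} in Definition \ref{3.7} presupposes a square root, so in (i) one should record that the square root witnessing the strictness of $M/I\cong[0,u^\sim]$ is precisely the induced one $r_I$ — harmless, since a pseudo MV-algebra has at most one (weak) square root.
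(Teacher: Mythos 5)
Your proof is correct and follows essentially the same route as the paper: part (i) takes the kernel of the projection $x\mapsto x\wedge u^\sim$ onto the strict factor of the Theorem \ref{3.10} decomposition (the paper writes this as $(\pi_2\circ\varphi)^{-1}(\{0\})$, which is the same ideal), and part (ii) squares the relation $r(0)/I=r(0)^-/I$ to get $u\in I$ exactly as the paper does. The explicit identification $\Ker(\phi_{u^\sim})=[0,u]$ and the remark that $[0,u]$ is the least normal ideal with strict quotient are small, correct additions beyond what the paper records.
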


\begin{proof}
(i) Consider the notations from the proof of  Theorem \ref{3.10}. Let $\pi_2:M_1\times M_2\to M_2$ sending $(x_1,y_1)$ to $y_1$ be the natural projection map.
Clearly, it is a homomorphism of pseudo MV-algebras. Set $I:=(\pi_2\circ \varphi)^{-1}(\{0\})$. Then $I$ is a normal ideal of $M$ and $M/I\cong M_2$.
From Theorem \ref{corHome} and the note just before Proposition \ref{strict-pro}, it follows that $M/I$ is a strict pseudo MV-algebra.

(ii) Let $I$ be a normal ideal of $M$ such that $M/I$ is a strict pseudo MV-algebra.
By Proposition \ref{Homo}, $r_I:M/I\to M/I$, defined by $r_I(x/I)=r(x)/I$, is a square root on $M/I$
(note that the natural projection map $f:M\to M/I$ sending $x$ to $x/I$ is onto), so $r_I(0/I)=r_I(0/I)^-=r(0)^{-}/I$.
It follows that $r(0)^-\odot r(0)^-\in I$, that is $[0,u]\s I$.
\end{proof}

\begin{cor}\label{DIPMVSQ}
Let $(M;\oplus,^-,^\sim,0,1)$ be a directly indecomposable pseudo MV-algebra with  square root. If $ |M|\ne 2$, then $M$ is strict.
\end{cor}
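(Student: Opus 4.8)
The plan is to read this off directly from the trichotomy established in Theorem~\ref{3.10}. Given the square root $r$ on $M$, exactly one of the three cases (i)--(iii) of that theorem holds, and I would rule out (i) and (iii) under the standing hypotheses ($M$ directly indecomposable, $|M|\ne 2$), leaving case (ii), which is precisely the statement that $M$ is strict.

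First I would dispose of case~(iii). Here $M\cong M_1\times M_2$ with $M_1$ a nondegenerate Boolean algebra and $M_2$ a nondegenerate strict pseudo MV-algebra; concretely, as in the proof of Theorem~\ref{3.10}, the Boolean element $u:=r(0)^-\odot r(0)^-$ lies in $\B(M)\setminus\{0,1\}$ and $M\cong[0,u]\times[0,u^\sim]$ with both factors proper. By Remark~\ref{3.6}, any pseudo MV-algebra for which $\B(M)\setminus\{0,1\}\ne\emptyset$ fails to be directly indecomposable, contradicting the hypothesis. Hence case~(iii) cannot occur.

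Next I would rule out case~(i), that $M$ is a Boolean algebra. Since the paper's standing convention is $0\ne 1$, a two-element pseudo MV-algebra is exactly the two-element Boolean chain, so $|M|\ne 2$ forces $|M|\ge 3$; then there is $a\in M$ with $a\notin\{0,1\}$, and because $M=\B(M)$ we get $a\in\B(M)\setminus\{0,1\}$, so Remark~\ref{3.6} again makes $M$ directly decomposable, a contradiction. (If one wishes to allow the degenerate algebra $0=1$, it is handled trivially: then $r=\id_M$ and $r(0)=0=1^-=r(0)^-$, so $M$ is vacuously strict, consistent with the conclusion.) Having excluded (iii) and (i), the only remaining alternative of Theorem~\ref{3.10} is (ii), i.e.\ $M$ is strict.

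I do not expect a genuine obstacle here: all the substantive work sits in Theorem~\ref{3.10} (the decomposition into Boolean part plus strict part) and Remark~\ref{3.6} (existence of a proper Boolean element forces decomposability). The only points requiring a moment's care are the bookkeeping that ``directly indecomposable'' eliminates the product case of Theorem~\ref{3.10}, and the observation that ``directly indecomposable plus $|M|\ne 2$'' eliminates the Boolean case; neither involves any computation.
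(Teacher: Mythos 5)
Your proposal is correct and follows essentially the same route as the paper's own proof: the paper also reduces to the Boolean element $u=r(0)^-\odot r(0)^-$ of Theorem \ref{3.10}, uses Remark \ref{3.6} together with direct indecomposability to force $\B(M)=\{0,1\}$ (ruling out the product case), and then excludes $u=1$ because that would make $M=\B(M)=\{0,1\}$, leaving $u=0$, i.e.\ strictness. The only cosmetic difference is that you invoke the trichotomy of Theorem \ref{3.10} as a black box while the paper redoes the case analysis on $u$ inline.
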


\begin{proof}
Let $M$ be a directly indecomposable pseudo MV-algebra.
If $M=\{0\}$, it is clearly strict. Let $2< |M|$ and let $u=r(0)^-\odot r(0)^-$. Then $u \in \B(M)$.
Since $M$ is directly indecomposable, in virtue of Remark \ref{3.6} and Proposition \ref{3.10}, $ \B(M)=\{0,1\}$.
If $u=1$, then $r(0)=0$, that is $M=\B(M)$, it contradicts $|M|>2$.
If $u=0$, then by the proof of Proposition \ref{3.10}, $r$ is strict.
\end{proof}

\begin{rmk}\label{3.15}
Let $\mathsf{Bool}$ be the class of Boolean algebras, $\mathsf{PMVSQ}$ be the class of pseudo MV-algebras with square roots, and $\mathsf{PMVSQ}_s$
be the class of pseudo MV-algebras with strict square roots. We know that the first and second classes are varieties.
For the third one, it suffices to consider the identities in Theorem \ref{variety} and an additional identity
$r(0)=r(0)^-$. Then we can easily show that $\mathsf{PMVSQ}_s$ is a variety, too.
Now, from Theorem \ref{3.10} we conclude that the variety $\mathcal{V}(\mathsf{Bool}\cup \mathsf{PMVSQ}_s)$, generated by $\mathsf{Bool}$ and $\mathsf{PMVSQ}_s$, is equal to $\mathsf{PMVSQ}$.
\end{rmk}

\begin{open}\label{op:1}
With respect to Proposition \ref{3.2}(8), show that the variety of pseudo MV-algebras with square roots properly contains the variety of pseudo MV-algebras with square roots satisfying $r(x)\odot r(y)\le r(x\odot y)$, $x,y\in M$.
\end{open}

Similarly to \cite[Page 16]{Bel}, we define a {\em strongly atomless} pseudo MV-algebra.
It is a pseudo MV-algebra $M$ such that for each non-zero element $x\in M\setminus\{0\}$, there exists a normal prime ideal
$P$ such that $x\notin P$ and $x/P$ is not an atom of $M/P$. Clearly, each strongly atomless pseudo MV-algebra is representable (since $\bigcap\{P\in \spec(M)\colon P \mbox{ is normal}\}=\{0\}$).
\begin{prop}\label{3.16}
Each strongly atomless pseudo MV-algebra is atomless.
\end{prop}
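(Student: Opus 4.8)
The plan is to argue by contradiction. Suppose $M$ is strongly atomless but nevertheless possesses an atom $a$. Since $a$ is an atom, $a\ne 0$, so the definition of strong atomlessness supplies a normal prime ideal $P$ with $a\notin P$ and $a/P$ not an atom of the quotient pseudo MV-algebra $M/P$. The idea is then to transport a witness of ``non-atomicity'' from $M/P$ back down to $M$ and thereby contradict the minimality of $a$.

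First I would recall from Section 2 that the canonical projection $\pi_P\colon M\to M/P$ is a surjective homomorphism of pseudo MV-algebras, hence order-preserving and compatible with the term-definable lattice operations $\wedge$ and $\vee$; moreover $0\in P$ always, so $\pi_P(x)=0/P$ if and only if $x\in P$. In particular $a\notin P$ gives $a/P\ne 0/P$. Since $a/P$ is a nonzero element of $M/P$ that is not an atom, there exists $\bar c\in M/P$ with $0/P<\bar c<a/P$.

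Next I would lift $\bar c$: pick $c\in M$ with $c/P=\bar c$ and then replace $c$ by $c\wedge a$. Because $\bar c\le a/P$ we have $(c\wedge a)/P=\bar c\wedge (a/P)=\bar c$, and trivially $c\wedge a\le a$ in $M$. Since $\bar c\ne 0/P$ we get $c\wedge a\notin P$, hence $c\wedge a\ne 0$, i.e.\ $0<c\wedge a$; and since $\bar c\ne a/P$ we get $c\wedge a\ne a$, hence $c\wedge a<a$. Thus $0<c\wedge a<a$ in $M$, contradicting the assumption that $a$ is an atom of $M$. Therefore $M$ has no atoms, i.e.\ $M$ is atomless.

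The only mildly delicate point — and the real crux of the argument — is making sure both strict inequalities survive the passage between $M$ and $M/P$: one uses that $P$ is exactly the $\pi_P$-preimage of $0/P$ (this yields $c\wedge a\ne 0$) and that two elements lying below $a$ with distinct images must be distinct (this yields $c\wedge a\ne a$). Everything else reduces to the homomorphism property of $\pi_P$, so I do not anticipate any substantial obstacle beyond bookkeeping of these inequalities.
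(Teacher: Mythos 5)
Your proof is correct; every step checks out. The surjection $\pi_P\colon M\to M/P$ is a pseudo MV-algebra homomorphism, hence preserves the term-definable $\wedge$, and $x/P=0/P$ iff $x\in P$, so the element $c\wedge a$ indeed satisfies $0<c\wedge a<a$, contradicting atomicity of $a$. Your route is, however, the mirror image of the paper's. The paper fixes the atom $a$, takes an \emph{arbitrary} $y$ with $0/P<y/P\le a/P$, and pushes the atom property forward: since $a\odot y^-\le a$ and $a$ is an atom, either $a\odot y^-=a$ (whence $a\wedge y=0\in P$ and primality of $P$ forces $y\in P$) or $a\odot y^-=0$ (whence $y/P=a/P$); so $a/P$ would be an atom of $M/P$, a contradiction. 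You instead pull a witness of non-atomicity backward: you choose one $\bar c$ strictly between $0/P$ and $a/P$ and lift it to $c\wedge a$ strictly between $0$ and $a$. Your version is more elementary — it needs neither the primality of $P$ nor any $\oplus/\odot$ computation, only that $P$ is a normal ideal with quotient map a lattice homomorphism whose kernel is $P$ — whereas the paper's computation yields the slightly more informative fact that an atom $a\notin P$ always maps to an atom of $M/P$ for $P$ prime. Both are valid proofs of the proposition.
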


\begin{proof}
Let $(M;\oplus,^-,^\sim,0,1)$ be strongly atomless.
Let $a$ be an atom of $M$. Then there exists a normal prime ideal $P$ of $M$ such that $a\notin P$ and $a/P$ is not an atom of $M/P$.
Assume that $y$ is an arbitrary element of $M$ such that $0/P<y/P\leq a/P$. Then $a\odot y^-\in P$.
From $a\odot y^-\leq a$ and $a$ is an atom of $M$, we get that $a\odot y^-=a$ or $a\odot y^-=0$.

(1) If $a\odot y^-=a$, then $a^-\vee y^-=a^-\oplus (a\odot y^-)=a^-\oplus a=1$, which means $a\wedge y=0\in P$. Thus
$y\in P$ (since $a\notin P$), so $y/P=0/P$.

(2) If $a\odot y^-=0$, then $(a\odot y^-)\oplus (y\odot a^-)=y\odot a^-\in P$. It follows that $y/P=a/P$.

Now, (1) and (2) imply that $a/P$ is an atom of $M/P$, which is a contradiction. Therefore, $M$ is atomless.
\end{proof}

\begin{thm}\label{3.17}
Let $r$ be a square root on a pseudo MV-algebra $(M;\oplus,^-,^\sim,0,1)$. If $M$ is strongly atomless, then $r$ is strict.
\end{thm}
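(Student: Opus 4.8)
The plan is to argue by contradiction. Suppose $r$ is not strict, so that $r(0)<r(0)^-$ by \eqref{r(0)}, and set $u:=r(0)^-\odot r(0)^-$. By Proposition \ref{3.2}(11), $u\in\B(M)$, and as in Case 2 of the proof of Theorem \ref{3.10}, if $u=0$ then $r(0)^-\le r(0)$, contradicting non-strictness; hence $u\ne 0$. The goal is to show that this nonzero Boolean element $u$ violates the defining property of a strongly atomless pseudo MV-algebra.

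First I would record the Boolean quotient attached to $u$. The projection $\phi_u\colon M\to[0,u]$, $\phi_u(x)=x\wedge u$, is a surjective homomorphism, and since $u^-=u^\sim$ is the complement of $u$ in $\B(M)$, its kernel is $\{x\in M\mid x\wedge u=0\}=[0,u^-]$, a normal ideal; hence $M/[0,u^-]\cong[0,u]$. By Proposition \ref{3.2}(13), $r_u(x)=r(x)\odot u$ is a square root on $[0,u]$, and, exactly as computed in the proof of Theorem \ref{3.10}, $r_u(0)=r(0)\odot u=0$. Therefore Theorem \ref{3.5}, applied to $[0,u]$ with the square root $r_u$, shows that $[0,u]$, and hence $M/[0,u^-]$, is a Boolean algebra.

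Now apply strong atomlessness to $u\ne 0$: there is a normal prime ideal $P$ of $M$ with $u\notin P$ such that $u/P$ is not an atom of $M/P$. From $u\wedge u^-=0\in P$ and primeness of $P$ we get $u\in P$ or $u^-\in P$; since $u\notin P$, necessarily $u^-\in P$, and therefore $[0,u^-]\subseteq P$. By the correspondence between normal ideals and congruences, $M/P\cong (M/[0,u^-])/(P/[0,u^-])$ is a quotient of the Boolean algebra $M/[0,u^-]$, hence is itself a Boolean algebra; on the other hand $M/P$ is linearly ordered, since $P$ is prime (condition (P1)). A linearly ordered Boolean algebra has at most two elements, and $u/P\ne 0$, so $M/P$ must be the two-element chain with $u/P=1$. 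But $1$ is an atom of the two-element chain, contradicting the choice of $P$. Hence $r$ is strict.

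The argument is largely bookkeeping; the only mildly delicate points are the identification $\Ker\phi_u=[0,u^-]$ (which uses that $u$ is complemented in $\B(M)$) and the standard fact that $M/P$ is linearly ordered for a prime $P$, so I do not anticipate a real obstacle. The conceptual content is simply that a nonzero Boolean element survives as $1$ in every chain quotient in which it is not killed, and $1$ is an atom of the two-element chain — so the presence of a nontrivial Boolean part (which, by Theorem \ref{3.10}, is exactly what non-strictness creates) is incompatible with strong atomlessness.
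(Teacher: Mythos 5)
Your proof is correct and follows essentially the same route as the paper's: both isolate the Boolean element $u=r(0)^-\odot r(0)^-$, observe that non-strictness forces $u\neq 0$ while $[0,u]$ is a Boolean algebra, and then contradict strong atomlessness by showing $u/P$ must be an atom of the chain $M/P$. The only cosmetic difference is that you deduce this by passing through the quotient $M/[0,u^-]\cong[0,u]$ to conclude $M/P$ is a two-element Boolean chain, whereas the paper argues directly that every $x/P\le u/P$ equals $(x\wedge u)/P$ and is therefore a Boolean element of the linearly ordered $M/P$.
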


\begin{proof}
Let $M$ be strongly atomless. Set $u:=r(0)^-\odot r(0)^-$. By Proposition \ref{3.2}(11), $u$ is a Boolean element. We claim that $u=0$.
Otherwise, $u\neq 0$ implies that there is $P\in\spec(M)$ such that $u\notin P$ and $u/P$ is not an atom of the pseudo MV-algebra $M/P$.
Since $M/P$ is linearly ordered, and $u/P$ is a non-zero Boolean element of the pseudo MV-algebra $M/P$, then $u/P$ is the top element of
$M/P$ and so $M/P=[0/P,u/P]$. By the proof of Theorem \ref{3.10}, $M_1=[0,u]$ is a Boolean algebra. Let $0/P\le x/P\le u/P$ for some $x\in M$. Then $x\wedge u\in [0,u]$ implies that
$x\wedge u$ is a Boolean element of $M$. From $x/P=(x\wedge u)/P$, we conclude that $x/P$ is a Boolean element of the linearly ordered
pseudo MV-algebra $M/P$. Thus $x/P=0/P$ or $x/P=u/P$, that is, $u/P$ is an atom of $M/P$, which contradicts the assumption.
Therefore, $u=0$ and so $r(0)^-=r(0)$.
\end{proof}

\section{Representations of Square Roots on Representable Symmetric Pseudo MV-algebras}%5

Applying Theorem \ref{3.10}, we show how each square root on a representable and symmetric pseudo MV-algebra or each strict weak square root on a representable pseudo MV-algebra can be represented using the group addition of the representing unital $\ell$-group. At the end of the section, we use this result to characterize strict pseudo MV-algebras by the strong atomless property.

\begin{thm}\label{7.1}
Let $(M;\oplus,^-,^\sim,0,1)$ be a totally ordered symmetric pseudo MV-algebra with a square root $r:M\to M$ and let $(G,u)$ be its related unital $\ell$-group. If $r(0)=0$, then $r=\id_M$. If $r(0)>0$,
for each element $x\in M$, the element $(x+u)/2$ exists in $M$, and $r(x)=(x+u)/2$ for $x\in M$, where $+$ denotes the group addition in $G$.
\end{thm}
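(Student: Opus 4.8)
The plan is to treat the two cases of the hypothesis separately. If $r(0)=0$, then Theorem~\ref{3.5} shows that $M$ is a Boolean algebra; being also totally ordered, $M$ is the two-element chain (or degenerate, where the claim is trivial), and on such an algebra $\id_M$ is the only square root by Example~\ref{3.4}(i), so $r=\id_M$. All the content lies in the case $r(0)>0$, treated below.

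First I would show that $r$ is automatically strict. A totally ordered pseudo MV-algebra is directly indecomposable, and the hypothesis $r(0)>0$ forces $|M|\neq 2$, since the two-element chain is a Boolean algebra and hence has $r(0)=0$ by Theorem~\ref{3.5}. Thus Corollary~\ref{DIPMVSQ} applies and $r$ is strict; that is, $r(0)=r(0)^-=r(0)^\sim$.

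Next I would pass to the representing unital $\ell$-group $(G,u)$ with $M=\Gamma(G,u)$. Since $M$ is symmetric, $u$ lies in the center $\C(G)$, so both $u$ and $-u$ commute with every element of $G$; and since $M$ is totally ordered it is representable, hence $G$ is representable and therefore enjoys unique extraction of roots. Strictness reads $r(0)=r(0)^-=u-r(0)$, and centrality of $u$ then gives $r(0)+r(0)=u$. For an arbitrary $x\in M$, Proposition~\ref{3.2}(1) gives $r(0)\le r(x)$, and adding this inequality to itself (using order-preservation of $+$) yields $u=r(0)+r(0)\le r(x)+r(x)$. Hence, using that $u$ is central,
\[
x=r(x)\odot r(x)=(r(x)-u+r(x))\vee 0=(r(x)+r(x)-u)\vee 0=r(x)+r(x)-u ,
\]
so $r(x)+r(x)=x+u$. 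By unique extraction of roots, $r(x)$ is the only element $y\in G$ with $y+y=x+u$; thus the element $(x+u)/2$ exists, equals $r(x)$, and lies in $M$, which is exactly the assertion.

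I expect the only delicate points to be bookkeeping rather than conceptual: making the reduction ``totally ordered $\Rightarrow$ strict'' airtight (this uses direct indecomposability, not yet symmetry) and being careful with the non-commutativity of $G$ when rewriting $r(x)-u+r(x)$ as $r(x)+r(x)-u$ — valid precisely because symmetry places $u$ in the center. Everything else reduces to the one-line computation displayed above.
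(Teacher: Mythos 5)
Your proposal is correct, and its core is the same as the paper's: pass to $(G,u)$, write $x=r(x)\odot r(x)=(r(x)-u+r(x))\vee 0$, use centrality of $u$ to get $2r(x)-u$, strip the join with $0$, and invoke unique extraction of roots in the totally ordered group to identify $r(x)$ with $(x+u)/2$. The one genuine difference is how you eliminate the $\vee\,0$: the paper splits into the cases $x\neq 0$ (where total order forces $(2r(x)-u)\vee 0=2r(x)-u$) and $x=0$ (where it argues separately, via the Boolean element $r(0)^-\odot r(0)^-$, that $2r(0)=u$), whereas you first establish strictness once and for all from Corollary \ref{DIPMVSQ} (total order gives direct indecomposability, and $r(0)>0$ rules out $|M|=2$), deduce $2r(0)=u$, and then use $r(0)\le r(x)$ together with order-preservation of $+$ to get $2r(x)\ge u$ uniformly. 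This buys you a single case-free computation at the cost of importing a result from Section 4; the paper's inline argument for $x=0$ is essentially a local re-proof of that strictness. Both are sound, and there is no circularity in your citation since Corollary \ref{DIPMVSQ} does not depend on Theorem \ref{7.1}.
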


\begin{proof}
Since $M=\Gamma(G,u)$, we have $G$ is also totally ordered, so it enjoys unique extraction of roots.

(1) If $r(0)=0$, by Theorem \ref{3.5}, $M$ is the Boolean algebra $\{0,1\}$, and $r=\id_M$.

(2) Assume that $r(0)\neq 0$. For each $x\in M$, we have two cases:

(i) If $x\neq 0$, then $x=r(x)\odot r(x)=(r(x)-u+r(x))\vee 0=(2r(x)-u)$ because $M$ is totally ordered. Then $x=2r(x)-u$ and so $x+u=2r(x)$. The unique extraction of roots property implies that the element $(x+u)/2$ exists in $M$. Hence, for each $x\in M\setminus\{0\}$, $r(x)=(x+u)/2$.

(ii) If $x=0$, then $0=r(0)\odot r(0)=(2r(0)-u)\vee 0$ entails that $2r(0)-u\leq 0$, that is $2r(0)\leq u$. On the other hand, if
$y\in M$, we have $y\odot y\leq 0$ \iff $2y\leq u$, so by (Sq2), $2y\leq u$ implies that $y\leq r(0)$. That is
$r(0)=\max\{z\in M\colon 2z\leq u\}$. Also, if $g\in G$ be such that $2g\leq u$, then $g\in G^-$ implies that $g\leq r(0)$ and
from $g\in G^+$ we get $g\leq 2g\leq u$, that is $g\in [0,u]=M$ which means $g\leq r(0)$.
Hence
\begin{eqnarray}\label{e1458}
r(0)=\max\{z\in G\colon 2z\leq u\}.
\end{eqnarray}
By Proposition \ref{3.2}(11), $r(0)^-\odot r(0)^-\in \B(M)$, so $r(0)^-\odot r(0)^-=0$ or $r(0)^-\odot r(0)^-=1$ (since $M$ is totally ordered).
If $r(0)^-\odot r(0)^-=1$, then $r(0)^-=1$ entails that $r(0)=0$, but this is excluded. If $r(0)^-\odot r(0)^-=0$, then $r(0)\oplus r(0)=u$ entails
$r(0)+r(0)\geq u$ and so by (\ref{e1458}), $2r(0)=u$. In this case, $r(0)=(0+u)/2$.
\end{proof}

\begin{thm}\label{7.2}
Let $M=\Gamma(G,u)$ be a representable symmetric pseudo MV-algebra with a strict square root $r$, where $(G,u)$ is a unital $\ell$-group. Then $(x+u)/2$ exists for each $x\in M$, and
\begin{equation}\label{eq:r-strict}
r(x)=\frac{x+u}{2},\quad x\in M,
\end{equation}
where $+$ denotes the group addition in $G$.
\end{thm}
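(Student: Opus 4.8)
The plan is to descend to the totally ordered case via a subdirect representation, apply Theorem~\ref{7.1} in each factor, and then reassemble the resulting identities inside the representing $\ell$-group $G$. First I would use representability of $M$ to fix a family $\{P_i\}_{i\in I}$ of normal prime ideals with $\bigcap_{i\in I}P_i=\{0\}$ such that each $M/P_i$ is linearly ordered. Each $M/P_i$ is symmetric, being a homomorphic image of the symmetric pseudo MV-algebra $M$, and by Corollary~\ref{corHom} the map $r_{P_i}(x/P_i)=r(x)/P_i$ is a square root on $M/P_i$. It is strict, since $r(0)=r(0)^-$ gives $r_{P_i}(0/P_i)=r(0)/P_i=r(0)^-/P_i=(r_{P_i}(0/P_i))^-$; and because $M/P_i$ is non-degenerate, strictness forces $r_{P_i}(0/P_i)\neq 0/P_i$, hence $r_{P_i}(0/P_i)>0/P_i$ in the chain $M/P_i$.

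Next I would pass to unital $\ell$-groups. Writing $M/P_i=\Gamma(G_i,u_i)$ and letting $\pi_i\colon M\twoheadrightarrow M/P_i$ be the canonical surjection, the functor $\mathbf{\Psi}$ of Theorem~\ref{functor} yields a surjective $\ell$-group homomorphism $\pi_i^G\colon G\to G_i$ with $\pi_i^G(u)=u_i$ whose restriction to $[0,u]$ is $\pi_i$; put $H_i:=\Ker\pi_i^G$, so that $[0,u]\cap H_i=P_i$ and $x/P_i$ corresponds to $x+H_i\in G_i$. The key lemma I would establish is $\bigcap_{i\in I}H_i=\{0\}$: if $g$ lies in every $H_i$, then so does $|g|$, hence by convexity $|g|\wedge u\in[0,u]\cap\bigcap_iH_i=\bigcap_iP_i=\{0\}$, so $|g|\wedge u=0$; a standard property of disjoint elements in an $\ell$-group then gives $|g|\wedge nu=0$ for all $n\ge 1$, and since $u$ is a strong unit $|g|\le nu$ for some $n$, whence $|g|=|g|\wedge nu=0$ and $g=0$. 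Thus $G$ embeds subdirectly into $\prod_iG_i$ compatibly with the distinguished units.

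Finally, for a fixed $x\in M$ I would set $y:=r(x)\in M\subseteq G$. Since $r_{P_i}(x/P_i)=y/P_i$ and $M/P_i$ is a totally ordered symmetric pseudo MV-algebra carrying the strict square root $r_{P_i}$ with $r_{P_i}(0/P_i)>0/P_i$, Theorem~\ref{7.1} yields $2\,(y/P_i)=x/P_i+u_i$ in $G_i$, i.e. $\pi_i^G\bigl(2y-(x+u)\bigr)=0$, so $2y-(x+u)\in H_i$ for every $i$; by the key lemma $2y=x+u$ in $G$. Hence $(x+u)/2$ exists, equals $y=r(x)\in M$, and the notation is unambiguous because $G$, being representable, enjoys unique extraction of roots. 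I expect the main obstacle to be exactly this transfer step: making sure the subdirect decomposition of $M$ lifts to one of $G$ with $u$ mapping to each factor's strong unit, for which the functor $\mathbf{\Psi}$ together with the strong-unit disjointness argument establishing $\bigcap_iH_i=\{0\}$ are the essential tools; once that is in place, the identity $2r(x)=x+u$ is forced coordinatewise by Theorem~\ref{7.1}.
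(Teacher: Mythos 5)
Your proposal is correct and follows essentially the same route as the paper's proof: pass to the subdirect representation by normal prime ideals, check that $r$ induces a strict square root on each linearly ordered quotient so that Theorem \ref{7.1} applies there, and then lift the identity $2r(x)=x+u$ back to $G$ through the functor $\mathbf{\Psi}$. The only difference is cosmetic: where the paper invokes the general fact that $\mathbf{\Psi}$ preserves injectivity of the embedding $M\to\prod_P M/P$, you prove the corresponding statement $\bigcap_i\Ker\pi_i^G=\{0\}$ directly via the disjointness argument with the strong unit, which is a sound and self-contained substitute.
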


\begin{proof}
Assume that $X$ is the set of all normal prime ideals $P\ne M$ of $M$. The set $X$ is non-empty because $M$ is representable. We note that also $G$ is representable. Consider the embedding $\varphi:M\to M_0:=\prod_{P\in X}M/P$ defined by $\varphi(x)=(x/P)_{P\in X}$. We have $r(0)>0$.

(1) For each $P\in X$, we have $r(0)\notin P$ (since $r$ is strict and $u=r(0)\oplus r(0)= r(0)^-\oplus r(0)\in P$), consequently, $r(0)/P\neq 0/P$, and $r_P(x/P)=r(x)/P$, is a square root on the totally ordered pseudo MV-algebra $M/P$ and $r_P$ is strict.

(2) By Proposition \ref{Homo}, the homomorphism $\pi_P\circ\varphi:M\to M/P$ induces a strict square root $t_P:M/P\to M/P$ defined by $t_P(x/P)=r(x)/P$. Let $\hat P$ be the $\ell$-ideal (i.e. a normal convex $\ell$-subgroup of $G$) of $G$ generated by $P$. Then $\Gamma(G/\hat P,u/\hat P)$ is a totally ordered, symmetric pseudo MV-algebra such that $M/P\cong \Gamma(G/\hat P,u/\hat P)$.
Since $M/P$ is totally ordered, (1) implies that $M/P$ is not a Boolean algebra and so by Theorem \ref{7.1},  $r(x)/P=t_P(x/P)=(x/P+_Pu/P)/2$, where $+_P$ denotes the group addition in the unital $\ell$-group $(G/\hat P,u/\hat P)=\mathbf{\Psi}(M/P)$. If we set $t((x/P)_{P\in X})=(t_P(x/P))_{P\in X}$, then $t$ is a square root on $M_0$ and $r$ is the restriction of $t$ onto $M$.

(3) Consider the unital $\ell$-group $\mathbf{\Psi}(\prod_{P\in X}M/P)=\prod_{P\in X}(\mathbf{\Psi}(M/P))$. Due to Theorem \ref{functor}, the functor $\mathbf{\Psi}:\mathcal{PMV}\ra \mathcal{UG}$
induces an $\ell$-group homomorphism $\mathbf{\Psi}(\varphi):(G,u)\to \mathbf{\Psi}(\prod_{P\in X}M/P)$ preserving the strong unit that is injective.
Recall that $\varphi(g)=\mathbf{\Psi}(\varphi)(g)$ for all $g\in M$. We have
$\varphi(r(x))=(r(x)/P)_{P\in X}=(t_P(x/P))_{P\in X}=((x/P+_Pu/P)/2)_{P\in X}$. Also,
$((x/P+_Pu/P)/2)_{P\in X}+((x/P+_Pu/P)/2)_{P\in X}=(x/P+_Pu/P)_{P\in X}$, so $((x/P+_Pu/P)_{P\in X})/2$ exists and is equal to
$((x/P+_Pu/P)/2)_{P\in X}$.

(4) By (3), $\varphi(r(x))+\varphi(r(x))=(x/P+_P u/P)_{P\in X}=((x+_Pu)/P)_{P\in X}$, so
\begin{eqnarray*}
r(x)+r(x)&=&(\mathbf{\Psi}(\varphi))^{-1}(\varphi(r(x))+\varphi(r(x)))=
(\mathbf{\Psi}(\varphi))^{-1}(\varphi(x)+\varphi(x))\\
&=&\mathbf{\Psi}(\varphi)^{-1}(\mathbf{\Psi}(\varphi)(x+u))=x+u,
\end{eqnarray*}
which means $r(x)=(x+u)/2$.
\end{proof}

\begin{thm}\label{7.3}
Let $M=\Gamma(G,u)$ be a representable symmetric pseudo MV-algebra with a square root $r:M\to M$, where $(G,u)$ is a unital $\ell$-group. For each $x\in M$, $r(x)=(x\wedge w)\vee((x\wedge w^-)+w^-)/2$, where
$w=r(0)^-\odot r(0)^-$.
\end{thm}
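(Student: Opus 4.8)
The plan is to deduce the formula from Theorem \ref{7.2} via the direct decomposition of $M$ afforded by Theorem \ref{3.10}. Write $w:=r(0)^-\odot r(0)^-$; by Proposition \ref{3.2}(11) we have $w\in\B(M)$, and the three cases of Theorem \ref{3.10} correspond precisely to $w=1$, $w=0$ and $0<w<1$. The two extreme cases are handled at once. If $w=1$, then $r(0)^-=1$ (since $x\odot x\le x$), so $r(0)=0$, and by Theorem \ref{3.5} $M$ is a Boolean algebra with $r=\id_M$; since $w^-=0$, the right-hand side of the asserted identity is $(x\wedge 1)\vee\big((x\wedge 0)+0\big)/2=x\vee 0=x=r(x)$. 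If $w=0$, then by the proof of Theorem \ref{3.10} the square root $r$ is strict, so by Theorem \ref{7.2} we have $r(x)=(x+u)/2$; here $w^-=1=u$, so the right-hand side is $(x\wedge 0)\vee\big((x\wedge u)+u\big)/2=0\vee(x+u)/2=(x+u)/2$ because $(x+u)/2\ge 0$. Thus the substance lies in the case $0<w<1$.

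In that case I would use Remark \ref{3.6}: the map $\psi\colon M\to M_1\times M_2$, $\psi(x)=(x\wedge w,\,x\wedge w^-)$, with $M_1:=[0,w]$ and $M_2:=[0,w^-]$, is an isomorphism of pseudo MV-algebras, and by the proof of Theorem \ref{3.10} the factor $M_1$ is a Boolean algebra while $M_2$ is a strict pseudo MV-algebra. The factor $M_2$ is again symmetric (symmetry of $M$ passes to every interval $[0,a]$ with $a\in\B(M)$) and again representable (being a direct factor of the representable $M$; equivalently, $\mathbf{\Psi}(M_2)$ is a direct summand of the representable $\ell$-group $G$, as $\mathbf{\Psi}$ preserves products by Theorem \ref{functor}). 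Since square roots are unique, since $\psi$ transports $r$ to a square root on $M_1\times M_2$ by Theorem \ref{corHome}(2), since any square root on a product is the product of the (unique) square roots on the factors (checked coordinatewise), and since by Proposition \ref{3.2}(13) the maps $a\mapsto r(a)\wedge w$ on $M_1$ and $r_2\colon b\mapsto r(b)\wedge w^-$ on $M_2$ are those square roots, I obtain
\[
r(x)=\psi^{-1}\big(\,x\wedge w,\ r_2(x\wedge w^-)\,\big),\qquad x\in M,
\]
where the first coordinate is $x\wedge w$ because the identity is the only square root on the Boolean algebra $M_1$ (Theorem \ref{3.5}, Example \ref{3.4}(i)). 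Applying Theorem \ref{7.2} to $M_2$, whose associated unital $\ell$-group $\mathbf{\Psi}(M_2)$ has strong unit $w^-$ and which is representable, symmetric and carries the strict square root $r_2$, gives $r_2(b)=(b+w^-)/2$ for all $b\in M_2$, the half being taken in $\mathbf{\Psi}(M_2)$.

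It then remains to make $\psi^{-1}$ explicit. For $a\le w$ and $b\le w^-$ one has $a\wedge b\le w\wedge w^-=0$, so the partial sum $a+b$ is defined in $M$ and equals $a\oplus b=a\vee b$; a short distributivity check (using $b\wedge w=0$ and $a\wedge w^-=0$) shows $\psi(a\vee b)=(a,b)$, hence $\psi^{-1}(a,b)=a\vee b$. Substituting $a=x\wedge w$ and $b=r_2(x\wedge w^-)=\big((x\wedge w^-)+w^-\big)/2$ yields
\[
r(x)=(x\wedge w)\vee\frac{(x\wedge w^-)+w^-}{2},
\]
which is the claimed identity. The one point I would treat with care — and the main obstacle — is the meaning of $+$ and of the division by $2$ in this last display: the statement reads them inside $G$, whereas Theorem \ref{7.2} produced them inside $\mathbf{\Psi}(M_2)$. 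Since $\mathbf{\Gamma}$, hence $\mathbf{\Psi}$, preserves direct products, $G\cong\mathbf{\Psi}(M_1)\times\mathbf{\Psi}(M_2)$, the element $(x\wedge w^-)+w^-$ lies in the factor $\mathbf{\Psi}(M_2)$, division by $2$ is coordinatewise, and the half of this element in $G$ is unique because a representable $\ell$-group enjoys unique extraction of roots; so the half taken in $\mathbf{\Psi}(M_2)$ agrees with the one taken in $G$, and the two readings of the formula coincide.
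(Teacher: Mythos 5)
Your proposal is correct and follows essentially the same route as the paper: split into the cases $w=1$, $w=0$, $0<w<1$, use the decomposition of Theorem \ref{3.10} into a Boolean factor $[0,w]$ and a strict factor $[0,w^-]$, identify the induced square roots on the factors (identity on the Boolean part, $(b+w^-)/2$ on the strict part via Theorem \ref{7.2}), and recombine by join. The paper phrases the recombination as $r(x)=(r(x)\wedge w)\vee(r(x)\wedge w^-)$ using the surjections $x\mapsto x\wedge w$ and $x\mapsto x\wedge w^-$ with Proposition \ref{Homo}, rather than writing out $\psi^{-1}$, but this is only a cosmetic difference; your extra care about where $+$ and the halving live is a welcome clarification the paper leaves implicit.
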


\begin{proof}
By Proposition \ref{3.2}(11), $w$ is a Boolean element of $M$.
If $w=1$, then $M$ is a Boolean algebra, $w^-=0$, $r=\id_M$ and $(x\wedge w)\vee((x\wedge w^-)+w^-)/2=x=r(x)$ for all $x\in M$.

If $w=0$, then $r$ is strict, $w^-=1$ and so by Theorem \ref{7.1},
$r(x)=(x+1)/2= (x\wedge w)\vee((x\wedge w^-)+w^-)/2$.

Suppose that $w\notin\{0,1\}$ and set $v:=w^-$. By Theorem \ref{3.10}, $M\cong M_1\times M_2$, where
$M_1$ is the Boolean algebra $([0,w];\oplus,^{-w},^{\sim w},0,w)$ and $M_2$ is the strict pseudo MV-algebra $([0,w];\oplus,^{-v},^{\sim v},0,v)$.
Consider the homomorphisms $f_1:M\to M_1$ and $f_2:M\to M_2$ defined by $f_1(x)=x\wedge w$ and $f_2(y)=y\wedge w^-$, for each $x\in M$. Clearly, $f_1$ and $f_2$ are surjective maps. By Proposition \ref{Homo}, $t_1(f_1(x))=f_1(r(x))=r(x)\wedge w$ and
$t_2(f_2(x))=f_2(r(x))=r(x)\wedge w^-$ (for all $x\in M$) are square roots on $M_1$ and $M_2$, respectively.
For each $x\in M$, we have
\begin{eqnarray*}
r(x)\wedge w&=& f_1(r(x))=t_1(f_1(x))=f_1(x)=x\wedge w\quad \mbox{ by Theorem \ref{7.1}}\\
r(x)\wedge w'&=& f_2(r(x))=t_2(f_2(x))=(f_2(x)+w')/2=((x\wedge w')+w')/2\quad \mbox{ by Theorem \ref{7.2}}.
\end{eqnarray*}
It follows that for all $x\in M$
\begin{eqnarray*}
r(x)&=& r(x)\wedge (w\vee w^-)=(r(x)\wedge w)\vee (r(x)\wedge w^-)=(x\wedge w)\vee \big((x\wedge w^-)+w^-\big)/2.
\end{eqnarray*}
\end{proof}

Inspired by Theorems \ref{7.1}--\ref{7.2} and Example \ref{3.4}(v), we present the following characterization of strict weak square roots on representable pseudo MV-algebras, which are not necessarily symmetric.

\begin{thm}\label{th:wsr}
Let $r$ be a strict weak square root on a representable pseudo MV-algebra $M=\Gamma(G,u)$. Then $((x-u)/2)+u$ exists in $M$ for each $x\in M$ and $r(x)=((x-u)/2)+u$, $x\in M$, where $+$ denotes the group addition in the group $G$. Moreover, $M$ is two-divisible.
\end{thm}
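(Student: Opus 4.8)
Following the approach of Theorems~\ref{7.1} and~\ref{7.2}, the plan is to reduce to the linearly ordered case by a subdirect representation, prove the formula there, and transfer it back along the representing unital $\ell$-group. Since $M=\Gamma(G,u)$ is representable, so is $G$, and hence both $G$ and $M$ enjoy unique extraction of roots. Let $X$ be the set of all normal prime ideals $P\ne M$ of $M$; representability gives $\bigcap_{P\in X}P=\{0\}$, so $\varphi\colon M\to\prod_{P\in X}M/P$ is an embedding with every $M/P$ linearly ordered, and, exactly as in the proof of Theorem~\ref{7.2}, with $\hat P$ the $\ell$-ideal of $G$ generated by $P$ we have $M/P\cong\Gamma(G/\hat P,u/\hat P)$ and the natural map $G\to\prod_{P\in X}G/\hat P$, $g\mapsto(g/\hat P)_{P\in X}$, is an injective $\ell$-group homomorphism preserving strong units (here $\hat P\cap M=P$ and $u$ is a strong unit, so $\bigcap_{P\in X}\hat P=\{0\}$). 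Thus it suffices to prove, for every $x\in M$ and every $P\in X$, the group identity $(r(x)/\hat P-u/\hat P)+(r(x)/\hat P-u/\hat P)=x/\hat P-u/\hat P$ in $G/\hat P$: injectivity then gives $(r(x)-u)+(r(x)-u)=x-u$ in $G$, and unique extraction of roots shows that $(x-u)/2$ exists in $G$, equals $r(x)-u$, whence $r(x)=((x-u)/2)+u\in M$.

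For the component identity I would \emph{not} try to descend $r$ to a weak square root on $M/P$ (whether $r(x)/P\mapsto r(x)/P$ is well defined is not purely formal when $x/P=0$); I would use only that the canonical projection is a pseudo MV-homomorphism, so $(r(x)/P)\odot(r(x)/P)=x/P$ in $M/P=\Gamma(G/\hat P,u/\hat P)$, i.e.\ $(r(x)/\hat P-u/\hat P+r(x)/\hat P)\vee 0=x/\hat P$, together with two facts carried over from $M$. First, strictness: applying the projection to $r(0)=r(0)^-$ gives $r(0)/P=(r(0)/P)^-$, that is $r(0)/\hat P=u/\hat P-r(0)/\hat P$ in $G/\hat P$, so $(u/\hat P)/2$ exists in $G/\hat P$ and equals $r(0)/\hat P$. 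Second, Proposition~\ref{3.2}(1) (valid for weak square roots) gives $r(0)\le r(x)$ in $M$, hence $r(0)/\hat P\le r(x)/\hat P$. Now if $x/P\ne 0/P$, then in the chain $G/\hat P$ the equality above forces $r(x)/\hat P-u/\hat P+r(x)/\hat P=x/\hat P$, and adding $-u/\hat P$ on the right yields the claim. If $x/P=0/P$, then the equality gives $(r(x)/\hat P-u/\hat P)+(r(x)/\hat P-u/\hat P)\le -u/\hat P$, while $r(x)/\hat P\ge r(0)/\hat P=(u/\hat P)/2$ gives, by monotonicity of doubling in an $\ell$-group, $(r(x)/\hat P-u/\hat P)+(r(x)/\hat P-u/\hat P)\ge -u/\hat P$; so equality holds, matching $x/\hat P-u/\hat P=-u/\hat P$. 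I expect this last case to be the only genuinely delicate point, since it is precisely where strictness and the lower bound $r(0)\le r(x)$ are indispensable and where a naive descent of $r$ would break down.

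Finally, two-divisibility of $M$ follows from the formula: applying $r(y)=((y-u)/2)+u$ to $y=x^-$ gives $(r(x^-)-u)+(r(x^-)-u)=x^--u=-x$ in $G$; negating this identity, $z:=r(x^-)^-=u-r(x^-)\in M$ satisfies $z+z=x$ in $G$, and since $z\ge 0$ and $z+z=x\le u$ we get $z\le z^\sim$, so $z+z$ is the partial sum in $M$ and equals $x$. Hence every element of $M$ is twice an element of $M$, i.e.\ $M$ is two-divisible.
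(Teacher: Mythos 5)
Your reduction to the linearly ordered quotients and the componentwise derivation of $(r(x)/\hat P-u/\hat P)+(r(x)/\hat P-u/\hat P)=x/\hat P-u/\hat P$ are correct, and in fact a bit more careful than the paper's own proof: the paper simply asserts that $r$ descends to a strict weak square root on each $M/P$, whereas you avoid that (genuinely non-formal, since Proposition~\ref{3.2}(9)--(10) are not available for weak square roots) step by using only that the projection is a homomorphism, together with the two inherited facts $r(0)/P=(r(0)/P)^-$ and $r(0)\le r(x)$. Your squeeze in the case $x/P=0/P$, bounding $2(r(x)/\hat P-u/\hat P)$ by $-u/\hat P$ from both sides, is exactly the content of the paper's Case~(2) and is sound, as is the transfer back along the injective unital $\ell$-homomorphism $G\to\prod_P G/\hat P$.

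The two-divisibility argument at the end, however, contains a genuine error. You claim $x^--u=-x$ in $G$; but $x^-=u-x$, so $x^--u=u-x-u$, which equals $-x$ only when $u$ commutes with $x$ --- essentially the symmetric case, which is precisely what this theorem must not assume (Example~\ref{example} is non-symmetric and its strong unit is not central). Consequently your element $z=r(x^-)^-$ satisfies $z+z=u+x-u$, not $z+z=x$; in Example~\ref{example} one computes $z=(\sqrt h,\,g/(2(1+\sqrt h)))$ and $z+z=(h,g/2)\ne(h,g)$. The conclusion is still reachable with a one-line repair: either work with $x^\sim$ instead of $x^-$, since $x^\sim-u=(-x+u)-u=-x$ exactly (this is the paper's route: $r(x^\sim)=((-x)/2)+u$, so $(-x)/2$ exists and $x/2=-((-x)/2)$ exists as well), or keep your $z$ and conjugate, noting that $-u+z+u\in[0,u]$ and $(-u+z+u)+(-u+z+u)=-u+(z+z)+u=x$.
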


\begin{proof}
The proof follows basic ideas from the proofs of Theorems \ref{7.1}--\ref{7.2}. Since $r$ is strict, $r(0)>0$.

(I) Let $M$ be totally ordered, and so is the group $G$. If we take $x\in M$, we have $x=r(x)\odot r(x)= (r(x)-u+r(x))\vee 0 = (2(r(x)-u)+u)\vee 0$. We have two cases:

Case (1). Let $x>0$. Since $M$ is totally ordered, $x= (2(r(x)-u)+u)\vee 0 = (2(r(x)-u)+u)$, so that $r(x)= ((x-u)/2)+u$.

Case (2). Let $x=0$. Then $u= r(0)^-\oplus r(0)=r(0)+r(0)=2r(0)$, that is, $r(0)=u/2= ((0-u)/2)+u$.

(II) Let $M$  be a subdirect product of totally ordered pseudo MV-algebras. We follow all steps of the proof of Theorem \ref{7.2}. Namely, let $X$ be the set of all normal prime ideals $P\ne M$ of $M$. The set $X$ is non-empty because $M$ is representable. Consider the embedding $\varphi:M\to M_0:=\prod_{P\in X}M/P$ defined by $\varphi(x)=(x/P)_{P\in X}$. Then we repeat all steps (1)--(4) from Theorem \ref{7.2}. We note that $r_P(x/P):=r(x)/P$ is a strict weak square root on the totally ordered pseudo MV-algebra $M/P$. In steps (2)--(4), we change variants of $(x+u)/2$ to the corresponding variants of $((x-u)/2)+u$, and we apply part (I) of the present proof, which finishes the statement in question.

Finally, we can establish that $M$ is two-divisible. Let $x\in M$, then $r(x^\sim)=(-x+u-u)/2+u= (-x)/2+u$. So that $(-x)/2$ exists in $G$. Then $-(-x/2)-(-x/2)=-((-x)/2)-((-x/2)=-(-x)=x$. Whence, $-(-x/2)=x/2$ exists in $M$ and $M$ is two divisible.
\end{proof}

We note that if $r$ is a strict weak square root on a pseudo MV-algebra $M=\Gamma(G,u)$, where $G$ enjoys unique extraction of roots, then $r(0)=u/2$, see Case (2) in the latter proof.

Cyclic elements of MV-algebras were studied in \cite{Tor}, and cyclic elements of pseudo MV-algebras in \cite{225}. We say that an element $a$ of a pseudo MV-algebra $M$ is said to be {\it cyclic of order} $n>0$ if $na$ exists in $M$ and $na
= 1$. If $a$ is cyclic of order $n$, due to associativity of the partial addition $+$, $a^-=a^\sim$. In fact, $a^- = (n-1)a = a^\sim$ because $a + (n - 1)a= 1 = (n - 1)a + a$. The set $\{0,a,2a,\ldots,na\}$ is a copy of the MV-algebra $\Gamma(\mathbb Z,1)$. Therefore, $M$ possesses a cyclic element of order $n\ge 1$ iff $M$ contains a copy of $\Gamma(\mathbb Z,n)$. If $M$ is representable and $a$ and $b$ are cyclic elements of order $n$, then $a=b$ due to the unique extraction of roots property of $M$. Whereas every MV-algebra has at most one cyclic element of order $n$, \cite[Thm 2.7]{Tor}, this is not true for pseudo MV-algebras as we can see from \cite[Ex 1]{225}. Theorem \ref{7.2} allows to characterize special cyclic elements of order $2^n$ for each $n\ge 1$ on a symmetric representable pseudo MV-algebras with square roots.

\begin{thm}\label{th:D}
Let $r$ be a strict square root on a symmetric representable pseudo MV-algebra $M=\Gamma(G,u)$. Then $M$ contains an isomorphic copy of the MV-algebra of dyadic numbers in the real interval $[0,1]$ and a unique cyclic element of order $2^n$ for each $n\ge 1$.

The same is true if $r$ is a strict weak square root on a representable pseudo MV-algebra $M$.
\end{thm}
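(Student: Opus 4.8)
The plan is to reduce both assertions to one. A strict square root on a symmetric representable pseudo MV-algebra is, in particular, a strict \emph{weak} square root on a representable pseudo MV-algebra, so it suffices to prove the version in which $r$ is a strict weak square root on $M=\Gamma(G,u)$ with $G$ representable; write $u$ for the strong unit.

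First I would apply Theorem~\ref{th:wsr}: it gives that $M$ is two-divisible, and, since a representable $\ell$-group enjoys unique extraction of roots, the remark following Theorem~\ref{th:wsr} gives $r(0)=u/2$. From two-divisibility together with unique extraction of roots, for every $n\ge 0$ there is a unique $a_n:=u/2^n\in G$ with $2^n a_n=u$, and $2^k a_n=u/2^{n-k}$ for $0\le k\le n$; in particular $0<a_n\le u$, so $a_n\in M$, the partial sums $j a_n$ ($0\le j\le 2^n$) all lie in $[0,u]$, whence $2^n a_n$ is defined in $M$ and equals $1$. Thus $a_n$ is a cyclic element of order $2^n$. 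Uniqueness is then automatic: as noted in the paragraph preceding the theorem, a representable pseudo MV-algebra has at most one cyclic element of any given order. (If one wants the dependence on $r$ made explicit, one has $a_n=r^n(0)^-=r^n(0)^\sim$, but this identity is not needed for the argument.)

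For the dyadic copy I would let $H$ be the subgroup of $G$ generated by $\{a_n:n\ge 0\}$. Since $a_n=2^{m-n}a_m$ for $n\le m$, $H=\bigcup_n\mathbb Z\,a_n$ is a directed union of infinite cyclic groups, hence abelian; as every $\ell$-group is torsion-free, the assignment $m a_n\mapsto m/2^n$ is a well-defined group isomorphism $H\to\mathbb D$, and because $a_n>0$ it carries $H\cap G^+=\{m a_n:n,m\ge 0\}$ onto $\mathbb D^+$, so it is an isomorphism of ordered groups sending $u=a_0$ to $1$; moreover $u$ is a strong unit of $H$. Hence $(H,u)\cong(\mathbb D,1)$ as unital $\ell$-groups and $\Gamma(H,u)\cong\Gamma(\mathbb D,1)$, the MV-algebra of dyadic numbers in $[0,1]$. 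It remains to observe that $H\cap[0,u]$ is a subalgebra of $M$: it is closed under $+$ and $-$, it is totally ordered so $\wedge$ and $\vee$ restrict from $G$, and $x^-=u-x$ and $x^\sim=-x+u$ coincide on the abelian group $H$ and stay in $H\cap[0,u]$; therefore $\Gamma(H,u)$ embeds into $M$ as desired, and it is genuinely an MV-algebra, since $\oplus$ is commutative on $H\cap[0,u]$, even when $M$ itself is not symmetric.

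The routine verifications — that $m a_n\mapsto m/2^n$ is well-defined and that $H\cap[0,u]$ is closed under all operations — reduce to torsion-freeness of $\ell$-groups and to the fact that all dyadic multiples of $u$ commute with one another, and present no difficulty. The only real content is the input ``$M$ is two-divisible and $r(0)=u/2$'', which cannot be obtained from Theorem~\ref{7.1} directly because $M$ need not be totally ordered; the main point of the proof is simply to recognise that the subdirect-representation argument already carried out in the proof of Theorem~\ref{th:wsr} supplies exactly this.
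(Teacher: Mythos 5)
Your proof is correct, and it reorganizes the argument in a way that differs from the paper's. The paper treats the two assertions separately: for the symmetric case it invokes the explicit formula $r(x)=(x+u)/2$ from Theorem~\ref{7.2} and runs an induction in which $u/2^{n+1}$ is extracted from $r(u/2^n)=u/2^{n+1}+u/2$ by subtracting $u/2$; for the weak case it repeats the same induction with the formula $r(x)=((x-u)/2)+u$. You instead notice that the first assertion is literally an instance of the second (a strict square root is a strict weak square root, and symmetric representable implies representable), and then you draw everything from the single conclusion of Theorem~\ref{th:wsr} that $M$ is two-divisible, iterating the halving of $u$ rather than iterating $r$; unique extraction of roots (available because $G$ is representable) makes the elements $u/2^n$ well defined and pins down uniqueness of the cyclic elements exactly as in the remark preceding the theorem. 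This unification is a genuine streamlining, and your explicit construction of the embedded copy of $\Gamma(\mathbb D,1)$ --- via the directed union $H=\bigcup_n \mathbb Z\,(u/2^n)$, which is abelian and totally ordered, so that $H\cap[0,u]$ is a commutative subalgebra of $M$ --- supplies detail that the paper leaves implicit after establishing $iu/2^n\in M$. What the paper's route buys in exchange is the concrete identity $r(u/2^n)=u/2^{n+1}+u/2$, which exhibits the dyadic elements as iterates of the square root; your parenthetical $a_n=r^n(0)^-$ recovers that connection, and you are right that it is not needed for the statement.
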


\begin{proof}
By Theorem \ref{7.2}, $r(x)=(x+u)/2$ for each $x\in M$. Then $r(0)=u/2\in M$. By induction, we show that for each integer $n\ge 1$, the element $u/2^n$ is defined in $M$. Since $3u= 2(3\frac{u}{2})$, we have $\frac{3u}{2}=3\frac{u}{2}$. Moreover, if $a/2$ and $(a/2)/2$ exist in $G$ for some $a\in G$, then $a/4=(a/2)/2$.
We have $r(u/2)=(u/2+u)/2= 3/4u\in M$. But $3u/4-u/2=u/4\in M$. Assume $u/2^n\in M$. Then $r(u/2^n)=(u/2^n +u)/2= (1+2^n)u/2^{n+1}= u/2^{n+1}+ u/2\in M$ and $r(u/2^n)-u/2= u/2^{n+1}\in M$. Therefore, for each $n\ge 1$, $u/2^n\in M$ and $iu/2^n\in M$ for each $i=0,1,\ldots, 2^n$.

The element $u/2^n$ is a unique cyclic element of $M$ of order $2^n$, $n\ge 1$. The uniqueness follows from the unique extraction of roots on symmetric pseudo MV-algebras.

Now, let $r$ be a strict weak square root on a representable pseudo MV-algebra. Then $r(0)=u/2$, and we assert that $M$ contains each $u/2^n$, $n\ge 1$. Let $u/2^n\in M$. Then $r(u/2^n)=((u/2^n - u))/2 +u= u/2^{n+1}+u/2\in M$. Subtracting $u/2$, we get $u/2^{n+1}\in M$.
\end{proof}

The latter theorem is a generalization of an analogous statement from \cite[Thm 6.9]{Hol}, and \cite[Thm 2.4]{Amb} known for strict squares of roots on MV-algebras as well as of \cite[Thm 2.3]{Amb} for cyclic elements.

Now, we describe the existence of strict square roots on representable pseudo MV-algebras.

\begin{thm}\label{th:sym}
Let $M=\Gamma(G,u)$ be a representable pseudo MV-algebra with a strict square root $r$. Then $M$ is symmetric, two-divisible, and $u/2\in \C(G)$.
\end{thm}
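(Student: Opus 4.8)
The strategy is to bootstrap from Theorem~\ref{th:wsr} and then convert the truncated commutation identity \eqref{eq1-rmk-cor6} into a genuine group-theoretic one. Since a strict square root is, a fortiori, a strict weak square root, Theorem~\ref{th:wsr} applies and shows at once that $M$ is two-divisible and that $r(x)=((x-u)/2)+u$ for every $x\in M$; because $G$ is representable, it enjoys unique extraction of roots, so the remark following Theorem~\ref{th:wsr} yields $r(0)=u/2$, i.e.\ $r(0)+r(0)=u$. Strictness reads $r(0)=r(0)^-$, that is, $u-u/2=u/2$ in $G$; in particular, for $x\in M$ one has $x+u/2\le u$ if and only if $x\le u/2$.

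The core step is to prove that every $x\in M$ with $x\le u/2$ commutes with $u/2$ in $G$. Fix such an $x$. Then $x+u/2\le u$ by the previous paragraph, and also $u/2+x\le u/2+u/2=u$; since $a\oplus b=(a+b)\wedge u$ coincides with $a+b$ whenever $a+b\le u$, the equality $x\oplus r(0)=r(0)\oplus x$ from Proposition~\ref{rmk-cor}(vi) collapses to $x+u/2=u/2+x$, as claimed.

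Next I would lift this to all of $[0,u]$. Given an arbitrary $y\in[0,u]$, two-divisibility together with unique extraction of roots produces the unique $y/2\in M$ with $y/2+y/2=y$, and $y\le u$ forces $y/2\le u/2$ (pass to the totally ordered quotients supplied by representability, where monotonicity of halving is immediate). By the core step $y/2$ commutes with $u/2$, hence so does $y=y/2+y/2$. Thus $u/2$ commutes with every element of the interval $[0,u]$, and since $[0,u]$ generates $G$ as a group, $u/2\in\C(G)$. Consequently $u=u/2+u/2\in\C(G)$, so by the criterion recalled in the preliminaries the pseudo MV-algebra $M=\Gamma(G,u)$ is symmetric. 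Combined with two-divisibility and $u/2\in\C(G)$, this is exactly the assertion of the theorem.

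The only points I expect to require care are the bookkeeping in the second and third paragraphs: verifying that for $x\le u/2$ both $x+u/2$ and $u/2+x$ genuinely lie in $[0,u]$, so that the truncation in $\oplus$ is inert, and that halving respects $y\le u$ in the representable (not necessarily linearly ordered) case. Everything else is a direct appeal to Theorem~\ref{th:wsr}, to Proposition~\ref{rmk-cor}, or to the standard fact that the unit interval of a unital $\ell$-group generates the group.
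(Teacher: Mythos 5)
Your proof is correct, and it organizes the argument differently from the paper's. The paper first treats the totally ordered case: there it likewise starts from $r(0)=u/2$ and the identity $x\oplus r(0)=r(0)\oplus x$ of Proposition \ref{rmk-cor}(vi), but it removes the truncation by a three-way case analysis on $x+u/2$ versus $u$ (the case $x>u/2$ being reduced to the case $x<u/2$ by subtracting $u/2$), concludes symmetry of the chain, and only then extracts two-divisibility via Theorem \ref{7.2}; the general representable case is handled afterwards by passing to the subdirect product of the linearly ordered quotients $M/P$. You instead import two-divisibility and the formula $r(x)=((x-u)/2)+u$ up front from Theorem \ref{th:wsr} --- legitimately, since a strict square root is in particular a strict weak square root --- make the truncation in \eqref{eq1-rmk-cor6} inert by first restricting to $x\le u/2$, and then reach a general $y\in[0,u]$ by halving rather than by subtracting $u/2$, which is exactly where the early acquisition of two-divisibility pays off. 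Your route never fully reduces to the totally ordered case for the symmetry claim (representability enters only through unique extraction of roots and the monotonicity of halving in the subdirect components), and it makes explicit the final step $u/2\in\C(G)$ via the centralizer argument together with the fact that $[0,u]$ generates $G$ (Riesz decomposition), a point the paper asserts without detail at the end of its proof. The bookkeeping items you flag are indeed unproblematic: for $x\le u/2$ both $x+u/2$ and $u/2+x$ lie in $[0,u]$ by translation invariance of the order, and $y\le u$ forces $y/2\le u/2$ because doubling is strictly monotone in each linearly ordered quotient and the subdirect embedding reflects the order.
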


\begin{proof}
(I) Assume that $M=\Gamma(G,u)$, where $(G,u)$ is a linearly ordered unital $\ell$-group.
If $M=\{0,1\}$, then $M$ is a Boolean algebra, and clearly, it is symmetric but $r=\id_M$ is not strict. Thus we have $|M|> 2$.
It suffices to show that $u+x=x+u$ for all $x\in M$. Let $x\in M$.
Corollary \ref{DIPMVSQ} implies that $r$ is strict and so $u-r(0)=r(0)^-=r(0)$, that is $u=2r(0)$ or equivalently, $r(0)=u/2$
(note that $G$ enjoys unique extraction of roots).
By \eqref{eq1-rmk-cor6}, $x\oplus u/2=u/2\oplus x$ consequently, $(x+u/2)\wedge u=(u/2+x)\wedge u$.

(i) If $x+u/2< u$, then $x<u/2$ and $u> x+u/2= x\oplus u/2= u/2 \oplus x = u/2+x$.

(ii) If $x+u/2=u$, that is $x=u/2$, then clearly $x+u/2=u/2+x$.

(iii) If $x+u/2> u$, then $u/2< x$, and $0\leq x-u/2< u/2$ and so by (i), $(x-u/2)+u/2=u/2+(x-u/2)$. It follows that
$x=u/2+(x-u/2)$ which entails $x+u/2=u/2+x$.

Now, (i)--(iii) imply that $x+u=u+x$ for all $x\in M$. Therefore, each linearly ordered pseudo MV-algebra with square root
is symmetric.

Now, we show that $M$ is two-divisible. By Theorem \ref{7.2}, $(x+u)/2$ exists in $M$ and $r(x)=(x+u)/2$ for each $x\in M$. If $x\in M$ is arbitrary, then $r(x^-)= (x^-+u)/2=(2u-x)/2$. Clearly, $(x-2u)/2$ is defined in $G$, so that $((x-2u)/2+u)+((x-2u)/2+u)=x$, which means $x/2$ exists in $G$ so that also in $M$.

(II) Let $M$ be a representable pseudo MV-algebra. As in the proof of Theorem \ref{7.2}, $M$ is a subdirect product of $\{M/P\mid P \in X\}$, where $X$ is the set of normal and prime ideals $P\ne M$ of $M$. Every $M/P$ is a linearly ordered pseudo MV-algebra and $r_P:M/P \to M/P$, defined by $r_P(x/P)=r(x/P)$, is a strict square root on $M/P$. By (I), $M/P$ is symmetric and two-divisible, so $M$ is also symmetric and two-divisible. In addition, $u/2\in \C(G)$.
\end{proof}

As a corollary of Theorem \ref{th:sym} with Example \ref{3.4}(iii), and Theorem \ref{th:wsr} with Example \ref{3.4}(v), we have the following result.

\begin{cor}\label{co:strict}
Let $M=\Gamma(G,u)$ be a representable pseudo MV-algebra.
\begin{itemize}
\item[{\rm (1)}] The pseudo MV-algebra $M$ has a strict square root if and only if $M$ is two-divisible and $u/2\in \C(G)$, so $M$ is symmetric, and $r(M)=[u/2,u]$.

\item[{\rm (2)}] The pseudo MV-algebra $M$ has a strict weak square root if and only if $M$ is two-divisible, then $r(M)=[u/2,u]$.

\item[{\rm (3)}] In Theorem {\rm \ref{7.2}} and Theorem {\rm \ref{7.3}}, the assumption ``$M$ is symmetric" is superfluous.

\item[{\rm (4)}] If $M$ is two-divisible and $u/2\in \C(G)$, then every strict weak square root on $M$ is a square root.
\end{itemize}
\end{cor}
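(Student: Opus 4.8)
The plan is to assemble the corollary from results already established: Theorem~\ref{th:sym} together with Example~\ref{3.4}(iii) handle strict square roots, Theorem~\ref{th:wsr} together with Example~\ref{3.4}(v) handle strict weak square roots, Theorem~\ref{3.10} supplies the Boolean/strict decomposition, and the uniqueness of weak square roots (noted just before Proposition~\ref{3.2}) glues the two pictures together.

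For~(1), in the forward direction a strict square root $r$ on $M$ forces, by Theorem~\ref{th:sym}, that $M$ is symmetric, two-divisible and $u/2\in\C(G)$; Theorem~\ref{7.2} then gives $r(x)=(x+u)/2$, so $r(0)=u/2$ and $r(M)=[r(0),1]=[u/2,u]$ by Proposition~\ref{3.2}(11)(c). Conversely, if $M$ is two-divisible with $u/2\in\C(G)$, then $u=u/2+u/2\in\C(G)$, so $M$ is symmetric, and Example~\ref{3.4}(iii)---valid for representable two-divisible unital $\ell$-groups with central $u/2$---produces the strict square root $r(x)=(x+u)/2$, whose image is again $[u/2,u]$.

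For~(2), if $M$ carries a strict weak square root $r$, Theorem~\ref{th:wsr} gives that $M$ is two-divisible and $r(x)=((x-u)/2)+u$, and since a representable $\ell$-group enjoys unique extraction of roots, the remark following Theorem~\ref{th:wsr} gives $r(0)=u/2$. I would then compute the image directly: $r$ is order-preserving by Proposition~\ref{3.2}(2) (which holds for weak square roots), so $r(M)\s[u/2,u]$, while for $y\in[u/2,u]$ the element $2y-u$ (computed in $G$) lies in $[0,u]=M$ and satisfies $r(2y-u)=y$. Conversely, for two-divisible $M$, Example~\ref{3.4}(v) yields the weak square root $r(x)=((x-u)/2)+u$, and $r(0)=u/2=r(0)^-$ shows it is strict, with image $[u/2,u]$ by the same surjectivity check.

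For~(3): Theorem~\ref{7.2} begins with a strict square root, which by Theorem~\ref{th:sym} (equivalently part~(1)) already makes $M$ symmetric, so the hypothesis is redundant there. For Theorem~\ref{7.3}, start from a representable $M$ with a square root; by Theorem~\ref{3.10}, either $M$ is Boolean (trivially symmetric), or $M$ is strict (symmetric by part~(1)), or $M\cong M_1\times M_2$ with $M_1$ Boolean and $M_2$ strict, in which case $M_2$---a direct factor of the representable $M$, hence itself representable---is symmetric by part~(1), so the product is symmetric; thus in every case ``$M$ symmetric'' is automatic. For~(4): if $M$ is two-divisible with $u/2\in\C(G)$, part~(1) furnishes a strict, hence weak, square root on $M$, and since weak square roots are unique, any strict weak square root on $M$ coincides with it and is in particular a square root. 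The only genuinely non-routine points---and hence what I would be most careful about---are recomputing $r(M)=[u/2,u]$ for a strict weak square root without the aid of Proposition~\ref{3.2}(11)(c), which needs the explicit formula of Theorem~\ref{th:wsr} together with the short surjectivity check above, and, in part~(3), checking that the strict direct factor $M_2$ delivered by Theorem~\ref{3.10} is again representable so that Theorem~\ref{th:sym} applies; everything else is immediate from the cited results.
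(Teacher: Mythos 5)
Your proposal is correct and follows essentially the same route as the paper: part (1) from Theorem~\ref{th:sym} plus Example~\ref{3.4}(iii), part (2) from Theorem~\ref{th:wsr} plus Example~\ref{3.4}(v), with parts (3) and (4) assembled from Theorem~\ref{3.10} and the uniqueness of weak square roots; you merely supply details (the image computation, the representability of the strict factor $M_2$) that the paper leaves implicit. One small caution in your surjectivity check for $r(M)=[u/2,u]$ in part (2): in the non-commutative setting the preimage of $y\in[u/2,u]$ should be taken as $y-u+y$ (which equals $y\odot y$ since $y\ge u/2$), not ``$2y-u$'' read as $y+y-u$, because $r(y-u+y)=\bigl((y-u)+(y-u)\bigr)/2+u=y$ uses unique extraction of roots, whereas $(y+y-u-u)/2$ need not simplify to $y-u$ unless $y$ and $u$ commute.
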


\begin{proof}
(1) If $r$ is a strict square root on $M$, by Theorem \ref{th:sym}, $M$ is two divisible and $u/2\in \C(G)$. Conversely, let $M$ be two-divisible and $u/2\in \C(G)$. Then $M$ is symmetric and from $(x/2+u/2)+(x/2+u/2)= x/2+x/2+u/2+u/2=x+u$ we have $r(x):=(x+u)/2=x/2+u/2$ exists in $M$ for each $x$. As in Example \ref{3.4}(iii), it is possible to show that $r$ is a strict square root on $M$.

(2) If $r$ is a weak square root on $M$, by Theorem \ref{th:wsr}, $M$ is two-divisible. Conversely, let $M$ be two-divisible. Given $x\in M$, the element $r(x):=(x^-/2)^\sim= -((u-x)/2)+u= (x-u)/2+u$ is defined in $M$. As in Example \ref{3.4}(v), it is possible to show $r$ is a strict weak square root on $M$.
\end{proof}

\begin{prop}\label{pr:symm}
Let $(G,u)$ be a two-divisible unital $\ell$-group which enjoys unique extraction of roots, and if $x\in M=\Gamma(G,u)$, then $x/2\le u/2$. If $M$ has a strict square root, then $M$ is symmetric and $u/2\in C(G)$.
\end{prop}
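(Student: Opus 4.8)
The plan is to make $r(0)$ explicit, to deduce from it that $u/2$ commutes with every element of $M=[0,u]$, and finally to lift the centrality of $u/2$ from the interval $M$ to all of $G$ using that $u$ is a strong unit.

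First I would identify $r(0)$. Since $r$ is a \emph{strict} square root, $r(0)=r(0)^-$, and in $\Gamma(G,u)$ this reads $r(0)=u-r(0)$, i.e. $2r(0)=u$; as $G$ enjoys unique extraction of roots this forces $r(0)=u/2$. In particular $u/2\in M$, and subtracting $u/2$ on the left of $u/2+u/2=u$ also gives $-u/2+u=u/2$. Next, Proposition \ref{rmk-cor}, equation \eqref{eq1-rmk-cor6}, gives $y\oplus r(0)=r(0)\oplus y$, i.e. $y\oplus(u/2)=(u/2)\oplus y$, for every $y\in M$. When $y\le u/2$, order-preservation of $+$ yields $y+u/2\le u$ and $u/2+y\le u$, so no truncation occurs and the identity becomes the honest group identity $y+u/2=u/2+y$. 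To promote this to an arbitrary $x\in M$, I would write $x=(x/2)+(x/2)$ (two-divisibility and unique extraction of roots); here $x/2\ge 0$ by the usual torsion-freeness fact for $\ell$-groups (namely $(2h)^-=2h^-$), and $x/2\le u/2$ by hypothesis, so $x/2\in[0,u/2]$ commutes with $u/2$ by the preceding sentence, hence so does $x=(x/2)+(x/2)$. (I note in passing that the hypothesis $x/2\le u/2$ is in fact automatic once a strict square root exists, since $x/2=r(x^-)^\sim\le r(0)^-=u/2$ by Lemma \ref{3.9}; so one could drop it, but it is convenient to use it as stated.)

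It then remains to upgrade ``$u/2$ commutes with every element of $M$'' to $u/2\in\C(G)$. Given $g\in G$, strongness of $u$ provides $n\ge 1$ with $0\le g+nu\le 2nu$, and the standard greedy decomposition $h_1:=(g+nu)\wedge u$, $h_{k+1}:=\bigl((g+nu)-(h_1+\cdots+h_k)\bigr)\wedge u$ writes $g+nu=h_1+\cdots+h_{2n}$ with every $h_k\in[0,u]=M$. Since $u/2$ commutes with each $h_k$ and with $u$ (hence with $nu$), it commutes with $g+nu$, and cancelling $nu$ on the right gives $u/2+g=g+u/2$. Thus $u/2\in\C(G)$, whence $u=u/2+u/2\in\C(G)$, and $M=\Gamma(G,u)$ is symmetric by the characterization of symmetry recorded in Section 2.

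I expect the only step that is more than a one-line computation to be this last one — the decomposition of an element of $[0,2nu]$ as a sum of elements of $[0,u]$, together with the observation that an element commuting with the whole interval $[0,u]$ must commute with all of $G$. Everything else reduces to strictness (to get $r(0)=u/2$), to Proposition \ref{rmk-cor}\eqref{eq1-rmk-cor6}, and to the hypothesis $x/2\le u/2$, which is used exactly to guarantee $x/2\in[0,u/2]$ in the key halving step; since $G$ is not assumed representable, one cannot instead invoke Theorem \ref{th:sym}, so this self-contained argument is needed.
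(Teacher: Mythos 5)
Your proof is correct and follows essentially the same route as the paper's: strictness gives $r(0)=u/2$, Proposition \ref{rmk-cor}\eqref{eq1-rmk-cor6} applied to an element $\le u/2$ gives the untruncated identity $u/2+x/2=x/2+u/2$, and doubling yields commutation of $u/2$ with every $x\in M$. The only difference is that you make explicit the final lifting of centrality from $[0,u]$ to all of $G$ via a Riesz-type decomposition (a step the paper leaves implicit in its ``Thus $u,u/2\in \C(G)$''); just note that in a non-commutative group the greedy step should use left subtraction, $h_{k+1}=\bigl(-(h_1+\cdots+h_k)+(g+nu)\bigr)\wedge u$, so that the partial sums actually telescope to $g+nu$.
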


\begin{proof}
Let $r:M\to M$ be a strict square root on $M$. From $r(0)=r(0)^-$ it follows that $2r(0)=u$.
Suppose that $x$ is an arbitrary element of $M$, by the assumptions, $x/2$ exists.
Due to Proposition \ref{rmk-cor}(vi),

\begin{eqnarray}\label{10103}
(r(0)+x/2)\wedge u=(r(0)\oplus x/2)=x/2\oplus r(0)=(x/2+r(0))\wedge u.
\end{eqnarray}
Whence, $r(0)+x/2,x/2+r(0)\leq r(0)+r(0)=u$, so by (\ref{10103}), $r(0)+x/2=x/2+r(0)$ which implies that
$u+x=r(0)+r(0)+x/2+x/2=x/2+x/2+r(0)+r(0)=x+u$. Thus $u,u/2\in \C(G)$ and $M$ is symmetric.
\end{proof}

We finish this section by applying Theorem \ref{7.2} to characterize strict pseudo MV-algebras by the strong atomless property.

In \cite[Thm 6.17]{Hol}, H{\"o}hle proved that if $M$ is a complete MV-algebra, then the concepts of a strict MV-algebra and a strongly atomless MV-algebra are equivalent.
In addition, each strongly atomless MV-algebra has a strict square root, see \cite[Lem 4.14]{Hol}.
We have generalized this result for pseudo MV-algebras in Theorem \ref{3.17}.

We are ready to show the converse for a more general case, not necessarily for complete MV-algebras.
Indeed, we show that if $(M;\oplus,^-,^\sim,0,1)$ is a representable symmetric pseudo MV-algebra with a strict square root $r:M\to M$, then $M$ is strongly atomless. It concludes that each strict MV-algebra is strongly atomless (note that each MV-algebra is representable and symmetric).

First, we establish the following helpful criterion.

\begin{lem}\label{3.17.1}
In each representable pseudo MV-algebra $(M;\oplus,^-,^\sim,0,1)$, the following conditions are equivalent:
\begin{itemize}[nolistsep]
\item[{\rm (i)}] $M$ is strongly atomless.
\item[{\rm (ii)}] For each $x\in M\setminus\{0\}$ there exists $y\in M$ such that
$0<y<x$ and $y\wedge (x\odot y^-)\neq 0$.
\item[{\rm (iii)}] For each $x\in M\setminus\{0\}$ there exists $y\in M$ such that
$0<y<x$ and $y\wedge (y^\sim\odot x)\neq 0$.
\end{itemize}
\end{lem}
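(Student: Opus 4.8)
The plan is to prove the cycle $(i)\Rightarrow(ii)\Rightarrow(iii)\Rightarrow(i)$, exploiting the fact that in a representable pseudo MV-algebra it suffices to test ``atomlessness type'' conditions fibrewise on the linearly ordered quotients $M/P$, $P\in\spec(M)$ normal. The guiding idea is that $x/P$ fails to be an atom of $M/P$ precisely when there is some $y$ with $0<y/P<x/P$, and that such a strict inequality can be captured internally by the algebraic expressions $y\wedge(x\odot y^-)$ and $y\wedge(y^\sim\odot x)$ being nonzero. Recall (from the discussion of $x\vee y$ via (A6)) that $x\odot y^-$ and $y\odot x^-$ are the ``differences'' appearing in $x\vee y=(x\odot y^-)\oplus y$, so $x\odot y^-=0$ is equivalent to $x\le y$; this will be the workhorse identity.

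\textbf{Step 1: $(i)\Rightarrow(ii)$.} Assume $M$ is strongly atomless and fix $x\ne 0$. Choose a normal prime ideal $P$ with $x\notin P$ and $x/P$ not an atom of $M/P$. Since $M/P$ is linearly ordered and $x/P>0/P$ is not an atom, there is $z\in M$ with $0/P<z/P<x/P$; replacing $z$ by $z\wedge x$ we may assume $0<z\le x$ in $M$ while still $0/P<z/P<x/P$. Now I would show $y:=z$ works: from $z/P<x/P$ we get $x\odot z^-\notin P$, i.e.\ $(x\odot z^-)/P>0/P$; from $0/P<z/P$ we get $z/P>0/P$; and since $M/P$ is linearly ordered these two strictly positive elements have nonzero meet in $M/P$, hence $y\wedge(x\odot y^-)\notin P$, in particular $y\wedge(x\odot y^-)\ne 0$. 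One must double-check the boundary case $z/P=0/P$ is genuinely excluded and that $0<z<x$ strictly in $M$ can be arranged (if $z=x$ then $z/P=x/P$, contradiction; if $z=0$ then $z/P=0/P$, contradiction), so $0<y<x$ holds.

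\textbf{Step 2: $(ii)\Leftrightarrow(iii)$.} This is a symmetry argument. Given $y$ with $0<y<x$ and $y\wedge(x\odot y^-)\ne 0$, I would apply the unary operation $^-$ (or $^\sim$) and the De Morgan--type laws (A5), (A8) together with Proposition~\ref{PMV-prop} to transport the witness: passing to a suitable prime quotient and using the linear order, $y\wedge(x\odot y^-)\ne 0$ is equivalent (fibrewise, hence globally via representability) to $0/P<y/P<x/P$ in some $M/P$, which is a self-dual condition, and this in turn is equivalent to $y'\wedge(y'^\sim\odot x)\ne 0$ for an appropriate $y'$ (one takes $y'$ with $0<y'<x$ witnessing the same strict inequality $0<y'/P<x/P$, e.g.\ $y$ itself after checking $y\wedge(y^\sim\odot x)\notin P$ by the same ``product of two strictly positive elements in a chain is strictly positive'' reasoning). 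Thus $(ii)$ and $(iii)$ are each equivalent to: for every $x\ne0$ there is $P\in\spec(M)$ normal with $x\notin P$ and $x/P$ not an atom of $M/P$.

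\textbf{Step 3: $(ii)\Rightarrow(i)$.} Conversely, assume $(ii)$ and fix $x\ne0$. Pick $y$ with $0<y<x$ and $y\wedge(x\odot y^-)\ne0$. Since $M$ is representable, $\bigcap\{P\in\spec(M): P\ \text{normal}\}=\{0\}$, so there is a normal prime $P$ with $y\wedge(x\odot y^-)\notin P$. Then $y\notin P$ and $x\odot y^-\notin P$, the latter giving $x/P\ne y/P$, hence (as $y\le x$) $y/P<x/P$; and $y\notin P$ gives $0/P<y/P<x/P$, so $x/P$ is not an atom of $M/P$, and clearly $x\notin P$. Hence $M$ is strongly atomless.

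\textbf{Main obstacle.} The routine part is the manipulation with $\odot$, $^-$, $^\sim$; the one genuinely delicate point is the repeated use of the principle ``in a linearly ordered pseudo MV-algebra, the meet of two nonzero elements is nonzero'' to convert a conjunction of two membership conditions ($y\notin P$ and $x\odot y^-\notin P$) into the single condition $y\wedge(x\odot y^-)\notin P$, and conversely — together with making sure the normal prime $P$ supplied by representability is simultaneously witnessing $x\notin P$. I would state this chain-principle explicitly once (it follows since a nonzero element of a linearly ordered pseudo MV-algebra generates the whole algebra as an ideal only trivially, so $a\wedge b=0$ with $a,b>0$ is impossible in a chain) and reuse it in Steps 1--3. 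The duality in Step 2 is then purely formal via (A5), (A8).
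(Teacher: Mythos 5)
Your proof is correct and follows essentially the same route as the paper's: reduce to the linearly ordered quotients $M/P$ by normal prime ideals, use the order characterization $x\odot y^-\in P\Leftrightarrow x/P\le y/P$ (and normality of $P$ to swap $x\odot y^-$ for $y^\sim\odot x$ in condition (iii)), and use primeness of $P$ to pass between ``$y\notin P$ and $x\odot y^-\notin P$'' and ``$y\wedge(x\odot y^-)\notin P$''. The only cosmetic differences are that the paper proves (i)$\Rightarrow$(ii) by contraposition while you argue it directly, and that your justification of the principle ``in a chain the meet of two nonzero elements is nonzero'' should simply be the primeness of $P$ (equivalently, $\min(a,b)>0$ in a chain) rather than the remark about generated ideals, which as stated is off.
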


\begin{proof}
We only prove (i) $\Leftrightarrow$ (ii), because the proof of (i) $\Leftrightarrow$ (iii) is similar.

Assume that $M$ is strongly atomless. Let there be $x\in M\setminus\{0\}$ such that for each $y\in M$ with $0<y<x$, we have $y\wedge (x\odot y^-)=0$. Choose a normal prime ideal $P$ of $M$ such that $x\notin P$. We claim that $x/P$ is an atom of $M/P$.
Let $0/P<z/P\leq x/P$ for some $z\in M$. Without loss of generality, we can assume that $z\leq x$ (since $z/P=z/P\wedge x/P=(x\wedge z)/P$).
By the assumption, $z\wedge (x\odot z^-)=0\in P$ and so $z\in P$ or $x\odot z^-\in P$.
If $z\in P$, then $z/P=0/P$. If $x\odot z^-\in P$, then $x/P\odot (z/P)^-=0/P$, that is $x/P\leq z/P$. Thus $x/P=z/P$, and $x/P$ is an atom of $M/P$, which contradicts (i).

Now, let (ii) hold. Choose $x\in M\setminus\{0\}$. Then there exists $y\in M$ such that
$0<y<x$ and $y\wedge (x\odot y^-)\neq 0$. Let $P$ be a normal prime ideal of $M$ such that $y\wedge (x\odot y^-)\notin P$.
Then $0/P<y/P\leq x/P$. If $y/P=x/P$, then $y\odot x^-,x\odot y^-\in P$ and so $y\wedge (x\odot y^-)\in P$, which is a contradiction.
Hence, $y/P<x/P$ and so $x/P$ is not an atom of $M/P$. Therefore, $M$ is strongly atomless.
\end{proof}

\begin{thm}\label{8.1}
Let $(M;\oplus,^-,^\sim,0,1)$ be a representable pseudo MV-algebra. If $M$ is strict, then it is strongly atomless.
\end{thm}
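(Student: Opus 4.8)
The plan is to verify condition (ii) of Lemma~\ref{3.17.1}: since $M$ is representable, it suffices to produce, for every $x\in M\setminus\{0\}$, an element $y\in M$ with $0<y<x$ and $y\wedge(x\odot y^-)\neq 0$, and the obvious candidate is $y:=x/2$.

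First I would invoke the structure theory already established. Write $M=\Gamma(G,u)$ for its representing unital $\ell$-group. Since $M$ is representable and strict, Theorem~\ref{th:sym} gives that $M$ is symmetric, two-divisible, and $u/2\in\C(G)$; moreover $G$ is representable, hence enjoys unique extraction of roots. Consequently, for each $x\in M$ there is a unique $y\in M$ with $y+y=x$ in the partial addition inherited from $G$; this element is $x/2$, and from $y\odot y=0$ and $y\oplus y=x$ we get that $2y=x$ holds in $G$, with $0\le y\le x\le u$.

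Next I would check the three requirements on $y:=x/2$ for $x\ne 0$. We have $y\ne 0$ (otherwise $x=2y=0$) and $y\ne x$ (otherwise $2x=x$ in the group $G$, forcing $x=0$), so $0<y<x$. For the key non-vanishing condition, compute in $G$ using $y^-=u-y$ and $a\odot b=(a-u+b)\vee 0$:
\[
x\odot y^-=(x-u+(u-y))\vee 0=(x-y)\vee 0=(2y-y)\vee 0=y .
\]
Hence $y\wedge(x\odot y^-)=y\wedge y=y=x/2\ne 0$, and Lemma~\ref{3.17.1} yields that $M$ is strongly atomless.

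I do not expect a genuine obstacle. Once Theorem~\ref{th:sym} supplies two-divisibility together with the $\Gamma$-representation, the evaluation of $x\odot y^-$ in the $\ell$-group is a one-line computation; the only points requiring a word of care are that $x/2$ genuinely lies in $M$ (two-divisibility plus unique extraction of roots) and that $0<x/2<x$ whenever $x\ne 0$. One could equally well verify condition (iii) of Lemma~\ref{3.17.1} via the dual computation $y^\sim\odot x=y$, but that is unnecessary.
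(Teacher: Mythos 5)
Your proof is correct, and it is a genuine streamlining of the paper's argument. Both proofs reduce the claim to condition (ii) of Lemma \ref{3.17.1} and both ultimately rest on the structure theory (Theorem \ref{th:sym} resp.\ Theorem \ref{7.2}) applied to the representing unital $\ell$-group; the difference is in the choice of witness. The paper splits into three cases ($x=1$; $x\in \B(M)\setminus\{0,1\}$; $x\notin\B(M)$), takes $y:=r(x^{-})^{\sim}$ in the latter two, and only in the non-Boolean case identifies this element with $x/2$ after invoking the explicit formula $r(x)=(x+u)/2$ from Theorem \ref{7.2}; in the Boolean case it instead lands on $y\wedge(x\odot y^{-})=x\wedge r(0)$ and needs a separate argument that this is nonzero. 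You observe that the uniform choice $y:=x/2$, which exists by the two-divisibility supplied by Theorem \ref{th:sym}, works for every $x\neq 0$ simultaneously: the computation $x\odot y^{-}=(x-u+(u-y))\vee 0=(x-y)\vee 0=y$ is valid in any $\ell$-group (only associativity and $y\ge 0$ are used, no commutativity or linearity), and $0<y<x$ follows from $x=y+y\neq 0$. This eliminates the case analysis and the dependence on the explicit square-root formula of Theorem \ref{7.2}; all you need from the structure theory is two-divisibility. The only points that must be stated carefully — that $x/2$ lies in $M$ and that $y<x$ strictly — are exactly the ones you flag, and both are handled correctly.
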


\begin{proof}
By Theorem \ref{th:sym}, $M$ is symmetric. Let $r:M\to M$ be a strict square root. Due to Theorem \ref{7.2}, $r(x)=(x+u)/2$ for each $x\in M$.
By Lemma \ref{3.17.1}, it suffices to show that for each $x\in M\setminus\{0\}$, there exists $y\in M$
such that $0<y<x$ and $y\wedge (x\odot y^-)\neq 0$. If $|M|=1$, then $M=\{0\}$ and so the proof is clear. Let $2\leq |M|$.
Choose $x\in M\setminus\{0\}$.

(1) If $x=1$, then choose $y\in M\setminus \B(M)$ such that $0<y<x$ (since $M$ is strict, by Theorem \ref{3.10}, $M$ is not a Boolean algebra and so we can find such an element). Then $y\wedge (x\odot y^-)=y\wedge y^->0$, since $y$ is not a Boolean element (see \cite[4.2]{georgescu}).

(2) If $x\in M\setminus\{0,1\}$ is such that $x\in \B(M)$, then by Proposition \ref{rmk-cor}(i), $r(x)=x\vee r(0)$ and $r(x^-)=x^-\vee r(0)$. Set $y:=r(x^-)^\sim$.
From relation $(\ref{eq1-rmk-cor3})$, we know that $0\leq y\leq x$. If $y=0$, then $r(x^-)=1=r(1)$ and so $x=0$, which is a contradiction.
If $y=x$, in view of Proposition \ref{3.2}(1), $x^-\vee r(0)=r(x^-)=x^-$. It follows that $x\leq r(0)^-=r(0)$ entails that $x\odot x\leq r(0)\odot r(0)=0$, which contradicts the assumption. Thus, $0<y<x$. We have
\begin{eqnarray*}
y\wedge (x\odot y^-)&=& r(x^-)^\sim \wedge (x\wedge r(x^-))=r(x^-)^\sim\wedge r(x^-)\quad \mbox{ by \eqref{eq1-rmk-cor3},  $r(x^-)^\sim\leq x$}\\
&=& (x\wedge r(0)^\sim) \wedge (x^-\vee r(0))=(x\wedge r(0)) \wedge (x^-\vee r(0))\quad \mbox{ since $r$ is strict}\\
&=& x\wedge r(0).
\end{eqnarray*}
We claim that $x\wedge r(0)\neq 0$, otherwise, since $x\in\B(M)$, $r(0)\leq x^-$ and so $x\leq r(0)^-=r(0)$. It follows that
$x=x\odot x\leq r(0)\odot r(0)=0$, which is absurd.

(3) If $x\in M\setminus \B(M)$, then similarly to (2), we can show that $0<r(x^-)^\sim<x$. Note that by Proposition \ref{rmk-cor}(i), $x\vee r(0)\leq r(x)$.
Set $y:=r(x^-)^\sim$. We will show that $0\neq y\wedge (x\odot y^-)=r(x^-)^\sim\wedge (x\odot r(x^-))$.
Let $M=\Gamma(G,u)$, where $(G,u)$ is a unital $\ell$-group.
By Theorem \ref{7.2}, $r(x^-)=(x^-+u)/2=(2u-x)/2$.

(i) Suppose that $a$ is an arbitrary element of $G$ such that $a/2$ exists, then
$a/2+u=(a+2u)/2$. Indeed, $(a/2+u)+(a/2+u)=a/2+a/2+2u=a+2u$, so $a/2+u=(a+2u)/2$ (note that $G$ enjoys unique extraction of roots).
Also, $(a/2-u)+(a/2-u)=a/2+a/2-2u=a-2u$, so $a/2-u=(a-2u)/2$.

(ii) For each $x\in G$, if $x/2$ exists, then $(-x)/2$ exists, too and $(-x)/2=-x/2$.
Because $x/2+x/2=x$ and so $-x=-x/2-x/2$.

(iii) Let $a\in M$. On the $\ell$-group $G$  by (i), we have
$r(a^-)^\sim=-((2u-a)/2)+u=(a-2u)/2+u=(a-2u+2u)/2=a/2$. That is, $a/2$ exists and belongs to $M$, see also Corollary \ref{co:strict}(1).

\begin{eqnarray*}
\label{ghghty12} r(x^-)^\sim\wedge (x\odot r(x^-))&=& x/2\wedge
\big((x+ (2u-x)/2-u)\vee 0\big)\quad \mbox{ by (i) and (iii)}\\
&=& x/2\wedge \big(x+ (2u-x)/2-u)\big)\\
&=& x/2\wedge \big(x+ (2u-x-2u)/2\big)=x/2\wedge \big(x+ (-x)/2\big) \quad \mbox{ by (i)}\\
&=& x/2\wedge x/2=x/2\quad \mbox{ by (ii).}
\end{eqnarray*}
Since $x\neq 0$, if we put $y=r(x^-)^\sim$, then $y\wedge (x\odot y^-)=x/2\neq 0$.

From (1)--(3) and Lemma \ref{3.17.1}, it follows that $M$ is strongly atomless.
\end{proof}

\begin{open}\label{op:2}
It would be interesting to find a representation of strict square roots on pseudo MV-algebras that are neither not necessarily representable nor symmetric in the way of the last theorems.
\end{open}

\section{Examples of Weak Square Roots that are Not Square Roots}%6

In the section, we present two examples of non-symmetric  representable pseudo MV-algebras that have weak square roots but no
square root.

Let $(H,u)$ be a unital $\ell$-group and $G$ be an $\ell$-group, both groups written additively. Let $\Aut(G)$ be the group of automorphisms of the po-group $G$. For example, given $g_0\in G$, the mapping $\psi_{g_0}:G\to G$, defined by $\psi_{g_0}(g)=-g_0+g+g_0$, $g \in G$, is an automorphism of $G$. Take a group homomorphism $\phi:H\to \Aut(G)$. The automorphism $\phi(h): G\to G$ is denoted simply by $\phi_h$ for each $h\in H$. Then $\phi_{h_1+h_2}= \phi_{h_1}\circ \phi_{h_2}$ for all $h_1,h_2\in H$, and $\phi_0$ is the identity on $G$. We define a {\it semidirect product} concerning $\phi$ of $H$ and $G$, $H\semid G$, as a special group structure on the direct product $H\times G$ ordered lexicographically, where the group operations on $H\semid G$ are defined by
$$
(h_1,g_1)\cdot (h_2,g_2)=(h_1+h_2,\phi_{h_2}(g_1)+g_2),\quad (h,g)^{-1}=(-h,\phi_{-h}(-g)).
$$
The neutral element is $(0_H,0_G)$, and $(u,0_G)$ is a strong unit of $H\semid G$. Then $H\semid G$ is an $\ell$-group if and only if $H$ is linearly ordered.

We define a pseudo MV-algebra $\Gamma(H,u)$ and a pseudo MV-algebra
\begin{equation}\label{eq:3.1}
M^{\phi}_{H,u}(G)=\Gamma(H\semid G,(u,0_G)).
\end{equation}

Then in the pseudo MV-algebra $M^{\phi}_{H,u}(G)$, we have
$$
(h,g)^-=(h^-,-\phi_{-h}(g)),\quad (h,g)^\sim = (h^\sim,-\phi_{h^\sim}(g)),
$$
and $M^{\phi}_{H,u}(G)$ is symmetric iff $\phi_u$ is the identity on $G$.

In the class of such pseudo MV-algebras, we find an example of a strict weak square root that is not a square root on a non-symmetric pseudo MV-algebra.

Let $H=(0,\infty)$ be the group of positive real numbers with the natural product and the natural order of reals numbers, and with the neutral element $1$. The element $u=2$ is a strong unit for $H$. On the other hand, let $G=\mathbb R$ with the natural addition of real numbers. For each $h>0$, the mapping $\phi_h:\mathbb R\to \mathbb R$, defined by $\phi_h(g)=hg$, $g \in \mathbb R$, is a group automorphism of $\mathbb R$. Define the pseudo MV-algebra $M=M^{\phi}_{H,2}(G)$, its top element is $(2,0)$. It is totally ordered and not symmetric: We have
$$
(h,g)^{-1}=(\frac{1}{h},-\frac{1}{h}g),\quad (h,g)^-=(\frac{2}{h}, -\frac{1}{h}g), \quad (h,g)^\sim= (\frac{2}{h},-\frac{2}{h}g).
$$

For all $(h_1,g_1), (h_2,g_2)\in M$, we have
\begin{equation}\label{eq:odot}
(h_1,g_1)\odot (h_2,g_2)=\big(\frac{h_1h_2}{2}, \frac{h_2}{2} g_1+g_2\big)\vee (1,0).
\end{equation}

\begin{exm}\label{example}
Define a mapping $r:M\to M$ by
\begin{equation}\label{eq:square}
r(h_0,g_0)=\big(\sqrt{2h_0},\frac{2}{\sqrt{2h_0}+2}g_0\big),\quad (h_0,g_0)\in M.
\end{equation}
We assert that $r$ is a strict weak square root on $M$ of the form \eqref{eq:form2} that is not a square root, and no variant of the formula \eqref{eq:r-strict} does hold.
\end{exm}

\begin{proof}
Property (Sq1) follows from \eqref{eq:odot} and from \eqref{eq:square} we conclude (Sq2). The mapping $r$ is non-decreasing and $r(2,0)=(2,0)$. We show (Sq3) fails because
\begin{eqnarray*}
r((h,g)^-)&=& \big(\frac{2}{\sqrt{h}}, \frac{-1}{\sqrt{h}+h} g\big)\ne r(h,g)^-\oplus r(1,0)= \big(\frac{2}{\sqrt{h}},\frac{-\sqrt{2}}{h+\sqrt{2h}}g\big)\wedge (2,0),\\
r((h,g)^\sim)&=& \big(\frac{2}{\sqrt{h}}, \frac{-2}{\sqrt{h}+h}g\big)\ne r(1,0)\oplus r(h,g)^\sim =
\big(\frac{2}{\sqrt{h}},\frac{-2}{h+\sqrt{2h}}g\big)\wedge (2,0).
\end{eqnarray*}

Moreover, $r$ is strict while $r(1,0)=(\sqrt{2},0)= r(1,0)^-$. Since $H\semid \mathbb R$ is two-divisible, we can count $(h,g)/2$ which is
$$
(h,g)/2= \big(\sqrt{h},\frac{1}{\sqrt{h}+1}g\big).
$$
Therefore, $((h,g)+(2,0))/2= (2h,2g)$ and $(2,0)+(h,g)=(2h,g)$ exists in $M$ but
$$
((h,g)+(2,0))/2 = (\sqrt{2h}, (2/(\sqrt{2h}+1))g\ne r(h,g)\ne (\sqrt{2h}, (2/(\sqrt{2h}))g),
$$
so that for our weak square root $r$, we have no variant of the formula \eqref{eq:r-strict}, but it coincides with formula \eqref{eq:form2}. We note that $r$ is not standard while
\begin{eqnarray*}
r(h,g)\odot r(1,0)&=&(\sqrt{2h},(\sqrt{2}/(\sqrt{2h}+2))g)=(\sqrt{h}, (\sqrt{2}/(\sqrt{2h}+2))g)\\
&\ne& (\sqrt{h}, (2/(\sqrt{2h}+2))g)= r(1,0)\odot r(h,g).
\end{eqnarray*}
\end{proof}

As a direct corollary of the latter example and Theorem \ref{variety}, we have that the class of pseudo MV-algebras with square roots is a proper subvariety of the variety of pseudo MV-algebras with weak square roots. In addition, on a non-symmetric representable pseudo MV-algebra there is no square root, see Theorem \ref{th:sym}, but a weak square root can exist. Therefore, Example \ref{ex:sym}(ii) also has a weak square root but no square root.

Now, we present another example of a non-symmetric linearly ordered two-divisible pseudo MV-algebra having a weak square root but no square root.

\begin{exm}\label{example1}
Let $G_2=\mathbb R^2$ and let the group operations be defined as follows $(x_1,y_1)+(x_2,y_2)=(x_1+x_2,\mathrm{e}^{x_2}y_1+y_2)$, $-(x,y)= (-x,-\mathrm{e}^{-x} y)$, and with the neutral element $(0,0)$. We endow $G_2$ with the lexicographic order. Then $G_2$ is linearly ordered and $u_2=(1,0)$ is a strong unit of $G_2$. Moreover, $G_2$ is two-divisible with $(x,y)/2= (x/2,y/(\mathrm{e}^{x/2}+1))$. Then $M_2=\Gamma(G_2,u_2)$ is a non-symmetric pseudo MV-algebra while
$(x,y)^-=(1-x,-e^{-x}y)$, $(x,y)^\sim= (1-x,-e^{-x+1}y)$,
and $r(x,y)=((x,y)-(1,0))/2+(1,0)= (\frac{x+1}{2},\frac{y}{\mathrm{e}^{(x-1)/2}+1})$ is a weak square root due to Theorem \ref{th:sym}, $M$ has no square root.

The element $u_3=(\ln 2,0)$ is also a strong unit of $G_2$ and the unital $\ell$-group $(G,u)$ from Example \ref{example} and the unital $\ell$-group $(G_2,u_3)$ are isomorphic as unital groups, the unital group isomorphism $\psi:(G,u)\to (G_2,u_3)$ is given by $\psi(h,g)=(\ln h,g)$, $(h,g)\in G$. Therefore, the pseudo MV-algebras $M=\Gamma(G,u)$ and $M_3=\Gamma(G_2,u_3)$ are isomorphic, and $M_3$ has a strict weak square root $r_3$ given by \eqref{eq:form2} that is not a square root. Moreover, $\psi(r(h,g))=r_3(\psi(h,g))$, $(h,g)\in M$.
\end{exm}

\begin{open}\label{op:3}
Characterize weak square roots.
\end{open}

\section{Conclusion}

To generalize the notion of square roots for pseudo MV-algebras from ones on MV-algebras in the way by \cite{Hol}, it was necessary to introduce square roots and weak square roots. They on MV-algebras coincide, but for pseudo MV-algebras, they can be different; see Examples \ref{example}--\ref{example1}. In the paper, we presented the results of our study of square roots. We have characterized Boolean algebras among pseudo MV-algebras using square roots, Theorem \ref{3.5}. We defined strict square roots and used them to characterize square roots, we showed that a pseudo MV-algebra $M$ with square root is either Boolean or strict or isomorphic to the direct product of a Boolean algebra and a strict pseudo MV-algebra, see Theorem \ref{3.10}. The class of pseudo MV-algebras with square root forms a proper subvariety of pseudo MV-algebras. It properly contains the variety of pseudo MV-algebras with strict square root, Theorem \ref{variety}. We introduced strongly atomless pseudo MV-algebras to characterize strict pseudo MV-algebras. We showed that if $M$ has a strict square root, then it is strongly atomless, Theorem \ref{3.17}, and the converse also holds for symmetric representable pseudo MV-algebras, see Theorem \ref{8.1}. Theorem \ref{7.2} and Theorem \ref{7.3} characterize every strict square root and every square root, respectively, on a representable symmetric pseudo MV-algebra using group addition in the corresponding unital $\ell$-group. An analogous result was established for strict weak square root on a representable pseudo MV-algebra, Theorem \ref{th:wsr}. We also showed that on a non-symmetric representable pseudo MV-algebra, there is no square root, Theorem \ref{th:sym}, but a weak square root can exist.

The study of square roots presented interesting research. It would be valuable to extend our study to strict square roots and strict weak square roots on not necessarily representable and not necessarily symmetric pseudo MV-algebras. We hope to completely characterize weak square roots.

\section{Acknowledgement}
The paper acknowledges the support by the grant of
the Slovak Research and Development Agency under contract APVV-20-0069  and the grant VEGA No. 2/0142/20 SAV,  A.D.

The project was also funded by the European Union's Horizon 2020 Research and Innovation Programme based on the Grant Agreement under the Marie Sk\l odowska-Curie funding scheme No. 945478 - SASPRO 2, project 1048/01/01,  O.Z.

%%%%%%%%%%%%%%%Thebibliography%%%%%%%%%%%%%%%%%%%%%%%%%%%%%%%
%%%%%%%%%%%%%%%%%%%%%%%%%%%%%%%%%%%%%%%%%%%%%%%%%%%%%%%%%%%%%

\end{document}